\theoremstyle{plain}
\newtheorem{theorem}{Theorem}[section]
\newtheorem{corollary}[theorem]{Corollary}
\newtheorem{lemma}[theorem]{Lemma}
\newtheorem{proposition}[theorem]{Proposition}
\theoremstyle{definition}
\newtheorem{definition}[theorem]{Definition}
\newtheorem{example}[theorem]{Example}
\newtheorem{examples}[theorem]{Examples}
\newtheorem{exercise}[theorem]{Exercise}
\newtheorem{conjecture}[theorem]{Conjecture}
\theoremstyle{remark}
\newtheorem{remark}[theorem]{Remark}
\def\R{{\mathbb R}}
\def\C{{\mathbb C}}
\def\N{{\mathbb N}}
\def\Z{{\mathbb Z}}
\def\F{{\mathbb F}}
\DeclareMathOperator{\graph}{graph}
\DeclareMathOperator{\ran}{ran}
\DeclareMathOperator{\dom}{dom}
\DeclareMathOperator{\rep}{Rep}
\DeclareMathOperator{\diag}{diag}
\DeclareMathOperator{\I}{I}
\DeclareMathOperator{\II}{II}
\DeclareMathOperator{\III}{III}
\DeclareMathOperator{\vN}{vN}
\newcommand{\actson}{\curvearrowright}
\title[Set theory and von Neumann algebras]{Set theory and von Neumann algebras}
\author{Asger T\"ornquist}
\author{Martino Lupini}
\begin{document}

\maketitle

\section*{Introduction}

The aim of the lectures is to give a brief introduction to the area of von Neumann algebras to a typical set theorist. The \emph{ideal} intended reader is a person in the field of (descriptive) set theory, who works with group actions and equivalence relations, and who is familiar with the rudiments of ergodic theory, and perhaps also orbit equivalence. This should not intimidate readers with a different background: Most notions we use in these notes will be defined. The reader \emph{is} assumed to know a small amount of functional analysis. For those who feel a need to brush up on this, we recommend consulting \cite{gkp}.

What is the motivation for giving these lectures, you ask. The answer is two-fold: On the one hand, there is a strong connection between (non-singular) group actions, countable Borel equivalence relations and von Neumann algebras, as we will see in Lecture 3 below. In the past decade, the knowledge about this connection has exploded, in large part due to the work of Sorin Popa and his many collaborators. Von Neumann algebraic techniques have lead to many discoveries that are also of significance for the actions and equivalence relations themselves, for instance, of new cocycle superrigidity theorems. On the other hand, the increased understanding of the connection between objects of ergodic theory, and their related von Neumann algebras, has also made it possible to construct large families of non-isomorphic von Neumann algebras, which in turn has made it possible to prove \emph{non-classification} type results for the isomorphism relation for various types of von Neumann algebras (and in particular, factors).

For these reasons, it seems profitable that (descriptive) set theorists should know more about von Neumann algebras, and we hope that these lectures can serve as a starting point for those who want to start learning about this wonderful area of mathematics.

Before moving on to the mathematics, a warning or three: Theorems and Lemmas below that are not attributed are \emph{not} due to the authors, and their origin can usually be deduced by perusing the surrounding text. Mistakes, however, are entirely due to the authors (specifically, the first author). On the other hand, it is often implicitly assumed below that Hilbert spaces are separable, and if a result is false {\it without} this assumption, then it is only a mistake if it also is false {\it with} this assumption.

\section*{Lecture 1}

\addtocounter{section}{1}

\begin{center}

{\it Basic definitions, examples, and the double commutant theorem.}

\end{center}

\medskip

In the following, $H$ will denote a separable Hilbert space with inner
product $\left\langle \cdot ,\cdot \right\rangle $ and norm $\left\Vert
\cdot \right\Vert $, while $B(H) $ will denote the linear space
of bounded linear operators on $H$. Unless otherwise specified, $H$ is also
assumed to be infinite-dimensional. Define on $B\left( H\right) $ the 
\emph{operator norm}
\begin{equation*}
\left\Vert T\right\Vert =\sup \left\{ \left\Vert T\xi \right\Vert \left\vert
\,\xi \in H,\left\Vert \xi \right\Vert \leq 1\right. \right\} \text{.}
\end{equation*}
Endowed with this norm, $B(H)$ is a Banach space, which is
nonseparable, unless $H$ is finite dimensional. Define the \emph{weak
(operator) topology }on $B(H) $ as the topology induced by the
family of complex valued functions on $B(H)$
\begin{equation*}
T\mapsto \left\langle T\xi ,\eta \right\rangle \text{,}
\end{equation*}
where $\xi$ and $\eta$ range over $H$. The \emph{strong (operator) topology }on $B\left(
H\right) $ is instead defined as the topology induced by the family of
functions
\begin{equation*}
T\mapsto \left\Vert T\xi \right\Vert \text{,}
\end{equation*}
where $\xi$ ranges over $H$. Both of these topologies are Polish on $B^{1}(H)$, where $B^{1}(H) $ denotes the unit ball of $B\left(
H\right) $ with respect to the operator norm. The Borel structure induced by either of these topologies coincide (see exercise \ref{e.borelstruct} below), and $B(H)$ is a standard Borel space with this Borel structure. (Note though that $B(H)$ is Polish in neither of these topologies.) The weak topology is weaker than the strong topology, which in turn is weaker than the norm topology. If $T\in B(H)$, denote by $
T^{\ast}$ the {\it adjoint} of $T$, i.e. unique element of $B(H)$ such that for all $\xi ,\eta \in H$,
\begin{equation*}
\left\langle T\xi ,\eta \right\rangle =\left\langle \xi ,T^{\ast }\eta
\right\rangle.
\end{equation*}
Endowed with this operation, $B(H)$ turns out to be a Banach
$*$-algebra, which furthermore satisfies the ``C*-axiom'',
\begin{equation*}
\left\Vert T^{\ast }T\right\Vert =\left\Vert T\right\Vert ^{2}\text{,}
\end{equation*}
that is, $B(H)$ is a C*-algebra (in the abstract sense). A subset of $B(H)$ will be called \emph{self-adjoint} if it contains the adjoint of any of its elements. A self-adjoint subalgebra of $B(H)$ will also be called a \emph{$*$-subalgebra.}

\begin{definition}\label{d.cstar-wstar}\ 
\begin{enumerate}
\item A \emph{C*-algebra }is a norm closed $*$-subalgebra of $B(H)$ (for some $H$).

\item A \emph{von Neumann algebra} is a weakly closed self-adjoint subalgebra of 
$B(H)$ (for some $H$) containing the identity operator $I$.
\end{enumerate}
\end{definition}

Since the weak topology is weaker than the norm topology, a von Neumann
algebra is, in particular, a C*-algebra. However, the ``interesting'' von Neumann algebras all turn out to be non-separable with respect to the norm topology. This should be taken as an indication that it would not be fruitful to regard von Neumann algebras simply as a particular kind of C*-algebras; rather, von Neumann algebras have their own, to some degree separate, theory.\footnote{This is not to suggest that no knowledge about C*-algebras will be useful when studying von Neumann algebras.}

The usual catch-phrase that people attach to definition \ref{d.cstar-wstar} to underscore the difference between C*-algebras and von Neumann algebras is that \emph{C*-algebra theory is non-commutative topology} (of locally compact spaces, presumably), while \emph{von Neumann algebra theory is non-commutative measure theory.} This grows out of the observation that all commutative C*-algebras are isomorphic to the $*$-algebra $C_0(X)$ of all complex-valued continuous functions on some locally compact $X$ (with the sup-norm), whereas all commutative von Neumann algebras are isomorphic to $L^\infty(Y,\nu)$ for some $\sigma$-finite measure space $(Y,\nu)$ (see example \ref{e.vnalgs}.(1) below). The reader is encouraged to ponder what the locally compact space $X$ would look if we wanted to realize the isomorphism $C(X)\simeq L^\infty(Y,\nu)$, where $Y=\N$ with the counting measure, or even $Y=[0,1]$ with Lebesgue measure. (This should help convince the skeptic why von Neumann algebras merit having their own special theory.)

\begin{remark}
Von Neumann algebras were introduced by Murray and von Neumann in a series of papers \cite{mvn1,mvn2,vn3,mvn4} in the 1930s and 1940s, where the basic theory was developed. In older references, von Neumann algebras are often called \emph{W*-algebras.}
\end{remark}

\begin{exercise}
Show that the map $T\mapsto T^*$ is weakly continuous, but not strongly continuous. ({\it Hint}: Consider the unilateral shift.) Show that the map $(S,T)\mapsto ST$ is separately continuous (with respect to the weak or strong topology), but not jointly continuous. Conclude that if $A\subseteq B(H)$ is a $*$-subalgebra of $B(H)$, then the weak closure of $A$ is a $*$-subalgebra of $B(H)$.
\end{exercise}

The reader may also want to verify that the map $(S,T)\mapsto ST$ is continuous on $B^1(H)$ w.r.t. the strong topology, but that this fails for the weak topology.
\begin{examples}\label{e.vnalgs}\ 

\begin{enumerate}

\item Let $\left( M,\mu \right) $ be a $\sigma $-finite standard measure space. Each $
f\in L^{\infty }\left( X,\mu \right) $ gives rise to a bounded operator $
m_{f}$ on $L^{2}\left( X,\mu \right) $, defined by
\begin{equation*}
\left( m_{f}\left( \psi \right) \right) \left( x\right) =f\left( x\right)
\psi \left( x\right) \text{.}
\end{equation*}
The set $\left\{ m_{f}\left\vert \,f\in L^{\infty }\left( X,\mu
\right) \right. \right\} $ is an abelian von Neumann algebra, which may be seen to
be a maximal abelian subalgebra of $B\left( L^{2}\left( M,\mu \right)
\right) $ (see Exercise \ref{e.masa}). It can be shown that any abelian von Neumann algebra looks like this, see \cite[III.1.5.18]{blackadar}.

\item $B(H)$ is a von Neumann algebra. In particular, when $H$ has
finite dimension $n$, $B(H)$ is the algebra $M
_{n}(\C)$ of $n\times n$ matrices over the complex
numbers.

\item Suppose that $\Gamma $ is a countable discrete group and consider
the unitary operators on $L^{2}(\Gamma)$ defined by
\begin{equation*}
\left( U_{\gamma }\psi \right) \left( \delta \right) =\psi \left( \gamma
^{-1}\delta \right) \text{.}
\end{equation*}
Define $\mathcal{A}$ to be the self-adjoint subalgebra of $B\left( L^{2}\left(
\Gamma \right) \right) $ generated by $\left\{ U_{\gamma }\left\vert
\,\gamma \in \Gamma \right. \right\} $. Observe that an element of $\mathcal{
A}$ can be written as $\sum_{\gamma \in \Gamma }a_{\gamma }U_{\gamma }$,
where $\left( a_{\gamma }\right) _{\gamma \in \Gamma }$ is a $\Gamma $
-sequence of elements of $\mathbb{C}$ such that $\left\{ \gamma \in \Gamma
\left\vert \,a_{\gamma }\neq 0\right. \right\} $ is finite. The weak closure
of $\mathcal{A}$ is a von Neumann algebra, called the \textbf{group von
Neumann algebra }$L\left( \Gamma \right) $ of $\Gamma $. For each $\delta \in \Gamma $ define
\begin{equation*}
e_{\delta }\left( \gamma \right) =
\begin{cases}
1 & \text{if $\gamma =\delta $,} \\ 
0 & \text{otherwise.}
\end{cases}
\end{equation*}
For $x=\sum_{\gamma \in \Gamma }a_{\gamma }U_{\gamma }\in \mathcal{A}$, define $\tau:\mathcal A\to\C$ by
\begin{equation*}
\tau(\sum_{\gamma \in \Gamma }a_{\gamma }U_{\gamma})=\left\langle \left( \sum_{\gamma \in \Gamma }a_{\gamma }U_{\gamma }\right)
e_{1},e_{1}\right\rangle =a_{1}\text{,}
\end{equation*}
where $1$ is the identity element of $\Gamma$. Clearly, $\tau$ is a linear functional on $\mathcal A$, which is positive ($\tau(x^*x)\geq 0$) and $\tau(I)=1$ (that is, $\tau$ is a {\bf state} on $\mathcal A$). A direct calculation shows that if $y=\sum_{\delta \in \Gamma }b_{\delta }U_{\delta }\in \mathcal{A}$ is another finite sum, then
\begin{equation*}
\left\langle \left( \sum_{\gamma \in \Gamma }a_{\gamma }U_{\gamma }\right)\left( \sum_{\delta \in \Gamma }b_{\delta }U_{\delta }\right)
e_{1},e_{1}\right\rangle = \sum_{\gamma,\delta\in\Gamma, \gamma\delta=1} a_\gamma b_\delta.
\end{equation*}
Thus $\tau(xy)=\tau(yx)$, i.e., $\tau$ is a \emph{trace} on $\mathcal A$. Of course, the formula $\tau(x)=\langle xe_1,e_1\rangle$ makes sense for any operator in $B(L^2(\Gamma))$ and defines a state on $B(L^2(\Gamma))$, and the reader may verify (using that composition is separately continuous) that $\tau$ satisfies the trace property $\tau(xy)=\tau(yx)$ for any $x,y$ in the weak closure of $\mathcal A$. Note that $\tau$ is weakly continuous.
\end{enumerate}

\end{examples}

\begin{definition}
For $X\subseteq B(H)$, define the \textbf{commutant} of $
X$ to be the set
$$
X^\prime=\{T\in B(H): (\forall S\in X) TS=ST\}.
$$
\end{definition}

Note that (by separate continuity of composition), $X^\prime$ is both weakly and strongly closed. Furthermore, if $X$ is self-adjoint, then so is $X^\prime$, and thus $X^\prime$ is a von Neumann algebra when $X$ is self-adjoint. In particular, the \textbf{double commutant} $X^{\prime\prime}=(X^\prime)^\prime$ is a von Neumann algebra containing $X$.

\medskip

The next theorem, known as \emph{double commutant theorem}, is a
cornerstone of the basic theory of von Neumann algebras. It is due to von
Neumann.

\begin{theorem}\label{t.dct}
Let $A\subseteq B(H)$ be a self-adjoint subalgebra of $B(H)$ which contains the identity operator. Then $A$ is strongly dense in $A^{\prime\prime}$.
\end{theorem}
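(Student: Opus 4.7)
The plan is to prove both inclusions of $\overline{A}^{\text{SOT}}=A''$ separately, where the easy direction is $\overline{A}^{\text{SOT}}\subseteq A''$, which follows immediately from $A\subseteq A''$ together with the fact (noted just before the theorem) that $A''$ is strongly closed. The content of the theorem is the reverse inclusion: given $T\in A''$, finitely many vectors $\xi_{1},\dots,\xi_{n}\in H$, and $\varepsilon>0$, I must produce $S\in A$ with $\|(T-S)\xi_{i}\|<\varepsilon$ for each $i$.

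I would first handle the case $n=1$, which is the geometric heart of the argument. Fix $\xi\in H$ and let $K=\overline{A\xi}$; this is a closed subspace of $H$, and since $A$ is an algebra it is invariant under $A$. Because $A$ is self-adjoint, $K^{\perp}$ is likewise $A$-invariant, and therefore the orthogonal projection $p$ onto $K$ commutes with every element of $A$, i.e.\ $p\in A'$. Since $T\in A''$, $T$ commutes with $p$, so $T$ preserves $K$. Because $I\in A$, we have $\xi=I\xi\in K$, hence $T\xi\in K=\overline{A\xi}$, which is exactly the statement that there exists $S\in A$ with $\|T\xi-S\xi\|<\varepsilon$.

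For the case of general $n$, I would use the standard amplification trick. Let $H^{n}=H\oplus\cdots\oplus H$ and define the diagonal embedding $\pi\colon B(H)\to B(H^{n})$ by $\pi(S)=S\oplus\cdots\oplus S$. Set $\tilde{A}=\pi(A)$, which is a unital self-adjoint subalgebra of $B(H^{n})$. The key computation is to identify the commutants: viewing operators on $H^{n}$ as $n\times n$ matrices with entries in $B(H)$, one checks that a matrix $(B_{ij})$ commutes with every $\pi(S)$ iff each $B_{ij}$ commutes with every $S\in A$, so $\tilde{A}'=M_{n}(A')$. Running the same matrix commutation computation one level up, using that $M_{n}(A')$ contains the matrix units $E_{ij}\otimes I$ and all diagonal embeddings of elements of $A'$, one obtains $\tilde{A}''=\pi(A'')$. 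Hence $\pi(T)\in\tilde{A}''$, and applying the $n=1$ case in $H^{n}$ to $\pi(T)$ and the single vector $\xi=(\xi_{1},\dots,\xi_{n})$ yields some $S\in A$ with $\|\pi(T)\xi-\pi(S)\xi\|^{2}=\sum_{i}\|(T-S)\xi_{i}\|^{2}<\varepsilon^{2}$, finishing the proof.

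The main technical point I expect to spend care on is the identification of $\tilde{A}''$ with $\pi(A'')$, since this requires manipulating matrices of operators and using both self-adjointness and the presence of the identity. Everything else is either geometric (the projection argument in step one) or formal. It is worth emphasizing why self-adjointness of $A$ is essential: it is used in precisely two places, namely to conclude that $K^{\perp}$ is $A$-invariant (so that $p\in A'$) and to ensure that $\tilde{A}$ remains self-adjoint so that $\tilde{A}'$ is closed under taking adjoints; unitality of $A$ is used to guarantee $\xi\in K$, without which one cannot conclude $T\xi\in K$.
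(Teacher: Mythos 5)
Your proposal is correct and takes essentially the same route as the paper: the cyclic-subspace projection argument for a single vector (the paper's Lemma \ref{l.dct}, with $p$ the projection onto $\overline{A\xi}$, using self-adjointness for $p\in A'$ and unitality for $\xi\in\overline{A\xi}$), followed by the diagonal amplification to $B(H^{n})$ via the identification of $\diag(A)'$ with the $n\times n$ matrices over $A'$. The only cosmetic difference is that you establish the full equality $\tilde{A}''=\pi(A'')$ using matrix units, whereas the paper only needs, and only checks, the inclusion $\diag(T_{0})\in\diag(A)''$, which follows directly from the form of $\diag(A)'$.
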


\begin{corollary}
If $A$ is a self-adjoint subalgebra of $B(H)$ containing $I$, then the weak and strong closure of $A$ coincide with $A^{\prime\prime}$, i.e. $\overline{
A}^{so}=\overline{A}^{wo}=A^{\prime \prime}$.
\end{corollary}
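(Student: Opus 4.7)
The plan is to show that for every $T \in A''$, every finite tuple $\xi_1, \dots, \xi_n \in H$ and every $\varepsilon > 0$ there is an $S \in A$ with $\|(T-S)\xi_i\| < \varepsilon$ for all $i$, which is exactly the definition of strong density. The inclusion $A \subseteq A''$ is immediate, and $A''$ is strongly (even weakly) closed by separate continuity of composition, so only the density itself requires work. The proof will proceed in two stages: first settle the case $n=1$ using a clever projection argument, then bootstrap to arbitrary $n$ by amplification to $H^n$.

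For the single vector case, fix $T \in A''$ and $\xi \in H$ and let $K = \overline{A\xi}$. Because $A$ is an algebra and $I \in A$, $K$ is a closed $A$-invariant subspace containing $\xi$. Let $p \in B(H)$ be the orthogonal projection onto $K$. The key claim is that $p \in A'$: indeed, if $S \in A$ and $\eta \in K^\perp$, then for every $\zeta \in K$ we have $\langle S\eta,\zeta\rangle = \langle \eta, S^*\zeta\rangle = 0$ since $S^* \in A$ (using self-adjointness) and $S^*\zeta \in K$, so $K^\perp$ is also $A$-invariant and $p$ commutes with every $S \in A$. Since $T \in A''$, it follows that $Tp = pT$, so $T\xi = Tp\xi \in K = \overline{A\xi}$, yielding some $S \in A$ with $\|T\xi - S\xi\| < \varepsilon$.

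To handle $n$ vectors simultaneously, I would apply the single-vector result inside the amplification $B(H^n) \cong M_n(B(H))$ by passing to the diagonal embedding $A_n = \{\diag(S,\dots,S) : S \in A\}$. A direct matrix computation shows $A_n' = M_n(A')$ (entry-wise commutation with the diagonal copy of any $S$ is the same as each entry commuting with $S$), and then taking the commutant again one verifies $A_n'' = \{\diag(R,\dots,R) : R \in A''\}$: commutation with the scalar matrix units $E_{ij}$ (which lie in $M_n(A')$ since $I \in A'$) forces a matrix in $A_n''$ to be of the form $\diag(R,\dots,R)$, and commutation with $\diag(a,\dots,a)$ for $a \in A'$ then forces $R \in A''$. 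Applying the single-vector case to the element $\diag(T,\dots,T) \in A_n''$ and the vector $(\xi_1,\dots,\xi_n) \in H^n$ produces an $S \in A$ with $\sum_i \|(T-S)\xi_i\|^2 < \varepsilon^2$, which is the desired strong approximation.

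The main obstacle is the single-vector step, specifically the insight that $p \in A'$; once this is in hand, the relation $T \in A'' \subseteq \{p\}'$ automatically propagates $T\xi$ into the $A$-orbit of $\xi$. The amplification trick is then a formal device that converts the single-vector statement into the many-vector one; it is routine but relies on correctly identifying $A_n'$ and $A_n''$. The hypothesis that $A$ is self-adjoint is essential in the first step (to get $K^\perp$ invariant), and the assumption $I \in A$ is what ensures $\xi \in K$ so that the approximation actually targets $T\xi$ and not some vector off the orbit closure.
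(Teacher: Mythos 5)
Your proof is correct and follows essentially the same route as the paper: the single-vector approximation via the projection onto $\overline{A\xi}$ is exactly the paper's Lemma \ref{l.dct}, and the diagonal amplification to $H^n$ is precisely how the paper proves the double commutant theorem, from which the corollary follows by noting $\overline{A}^{so}\subseteq\overline{A}^{wo}\subseteq A^{\prime\prime}$. Your computation of the full double commutant $\diag(A)^{\prime\prime}$ via matrix units does slightly more than needed (the paper only observes that $\diag(A)^{\prime}$ consists of operator matrices with entries in $A^{\prime}$, whence $\diag(T)\in\diag(A)^{\prime\prime}$), but the argument is the same.
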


The corollary follows from the double commutant theorem by noting that $\overline{
A}^{so}\subseteq\overline{A}^{wo}\subseteq A^{\prime \prime}$. To prove the double commutant theorem, we start with the following:

\begin{lemma}\label{l.dct}
Let $A$ be a selfadjoint subalgebra of $B(H)$ which contains $I$, and let $T_0\in A^{\prime\prime}$. Then for any $\xi \in H$ and $\varepsilon>0$ there is $S\in A$ such that $\|(T_0-S) \xi \| <\varepsilon$.
\end{lemma}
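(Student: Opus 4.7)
The plan is to exploit the cyclic subspace generated by $\xi$ under $A$ and use that $T_0$, being in the double commutant, must preserve it.

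First I would set $K = \overline{A\xi} = \overline{\{S\xi : S\in A\}}$, a closed subspace of $H$, and let $p \in B(H)$ be the orthogonal projection onto $K$. The key claim is that $p \in A'$. To see this, note that since $A$ is an algebra, $A\xi$ is invariant under every $S \in A$, and by continuity so is its closure $K$; thus $SK \subseteq K$ for every $S \in A$. Because $A$ is self-adjoint, $S^* \in A$ also maps $K$ into $K$, which means $S$ maps $K^\perp$ into $K^\perp$ (by the standard adjoint/orthogonal-complement duality). Having both $K$ and $K^\perp$ invariant under $S$ is equivalent to $Sp = pS$, so $p$ commutes with every element of $A$, i.e. $p \in A'$.

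Now since $T_0 \in A'' = (A')'$, $T_0$ commutes with $p$, so $T_0 K \subseteq K$. The hypothesis that $I \in A$ is used here: it ensures that $\xi = I\xi \in A\xi \subseteq K$, and hence $T_0 \xi \in K = \overline{A\xi}$. By definition of the closure, for any $\varepsilon > 0$ there exists $S \in A$ with $\|T_0\xi - S\xi\| < \varepsilon$, which is exactly the conclusion.

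The only slightly delicate step is the verification that $p \in A'$, specifically that self-adjointness of $A$ plus invariance of $K$ implies invariance of $K^\perp$; everything else is bookkeeping. The lemma itself is a one-vector approximation, and the full double commutant theorem will presumably be deduced by running this argument simultaneously on finitely many vectors via the amplification trick (replacing $H$ by $H^n$ and $A$ by $\{\diag(S,\ldots,S) : S \in A\}$), but that is the next step, not part of this lemma.
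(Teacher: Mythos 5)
Your proof is correct and takes essentially the same approach as the paper: both project onto $K=\overline{A\xi}$, verify $p\in A'$ via the self-adjointness of $A$ (you by showing $K$ and hence $K^\perp$ are $A$-invariant, the paper by the equivalent direct computation $\langle S\eta, T\xi\rangle = \langle \eta, S^*T\xi\rangle = 0$ for $\eta\in\ker(p)$), and then use $T_0\in A''$ together with $\xi\in K$ (which is where $I\in A$ enters) to get $T_0\xi\in K$ and the desired approximation. No gaps.
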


\begin{proof}
Define $p\in B(H)$ to be the orthogonal projection onto the
closure of the subspace $A\xi=\{ T\xi : T\in A\}$ (which contains $\xi$).
We claim that $p\in A^{\prime }$. To see this, note that if $\eta \in \ker(
p)$ and $S\in A$, then for every $T\in A$ we have
\begin{equation*}
\langle S\eta , T\xi \rangle = \langle \eta ,S^{\ast }T\xi\rangle =0,
\end{equation*}
whence $\ker(p)$ is $A$-invariant. It follows that $p\in A^\prime$, since for any $T\in A$ and $\eta\in H$ we have
$$
pT\eta=pTp\eta+pT(1-p)\eta=pTp\eta=Tp\eta.
$$
Since $T_0\in A^{\prime\prime}$ we therefore have $T_0\xi=T_0p\xi=pT_0\xi\in\ran(p)$, and so there is some $S\in A$ such that $\|S\xi-T_0\xi\|<\varepsilon$, as required.
\end{proof}

\begin{proof}[Proof of Theorem \ref{t.dct}]
We first need some notation. Fix $n\in\N$. If $(T_{i,j})_{1\leq i,j\leq n}$, where $T_{i,j}\in B(H)$, is a ``matrix of operators'', then an operator in $B(H^n)$ is defined by ``matrix multiplication'', i.e.,
$$
(S_{(T_{i,j})}(\eta_1,\ldots,\eta_n))_k=\sum_{j=1}^n T_{k,j}\eta_j.
$$
(Note that every operator in $B(H^n)$ has this form.) For $T\in B(H)$, we let $\diag(T)$ be the operator
$$
\diag(T)(\xi_1,\ldots,\xi_n)=(T\xi_1,\ldots,T\xi_n),
$$
corresponding to the matrix of operators which has $T$ on the diagonal, and is zero elsewhere.

Now fix $\xi_1,\ldots, \xi_n\in H$, $\varepsilon>0$ and $T_0\in A^{\prime\prime}$, and let $\vec{\xi}=(\xi_1,\ldots,\xi_n)$. Note that $\diag(A)=\{\diag(T):T\in A\}$ is a self-adjoint subalgebra of $B(H^n)$ containing $I$. It suffices to show that $\diag(T_0)\in \diag(A)^{\prime\prime}$ since then by Lemma \ref{l.dct} there is $S\in A$ such that $\|(\diag(S)-\diag(T_{0}))\vec{\xi}\|_{H^n}<\varepsilon$. Thus $\|S\xi_k-T_0\xi_k\|<\varepsilon$ for all $k\leq n$, which shows that $A$ is strongly dense in $A^{\prime\prime}$.

To see that $\diag(T_0)\in \diag(A)^{\prime\prime}$, simply note that $S\in \diag(A)^\prime$ precisely when it has the form $S_{(T_{i,j})}$ where $T_{i,j}\in A^\prime$ for all $i,j$. Thus $\diag(T_0)\in\diag(A)^{\prime\prime}$.
\end{proof}

\begin{remark}
If $X\subseteq B(H)$ is self-adjoint and contains $I$, then it follows from the double commutant theorem that $X^{\prime\prime}$ is the smallest von Neumann algebra containing $X$, i.e., $X^{\prime\prime}$ is the von Neumann algebra generated by $X$.
\end{remark}

An element $u$ of $B\left( H\right) $ is called {\bf partial isometry} if $u^{\ast
}u$ and (hence) $uu^{\ast }$ are orthogonal projections. The first one is
called the {\bf support projection} of $u$ and the latter the {\bf range projection} of $u$.

Recall the {\bf polar decomposition} theorem for bounded operators: If $T\in B(H)$, then there is a partial isometry $u$ such that $T=u|T|$, where $
|T|=(T^{\ast }T)^{\frac{1}{2}}$. In this
case, the support projection of $u$ is the orthogonal projection onto $\ker(T)^{\bot }$, while the range projection of $u$ is the orthogonal projection onto $\overline{\ran(T)}$. It is easy to see that $u$ restricted to $\ran(u^{\ast }u)$
is an isometry onto $\ran(uu^{\ast })$. The polar decomposition of an operator is unique.

\begin{exercise}
Let $M\subseteq B(H)$ be a von Neumann algebra. Let $T\in M$, and let $T=u|T|$ be the polar decomposition of $T$. Prove that $|T|\in M$ and $u\in M$. ({\it Hint}: To show $|T|\in M$, you may want to recall how the existence of the square root is proved, see e.g. \cite{gkp}. To show that $u\in M$, use the polar decomposition theorem.)
\end{exercise}

\begin{exercise}
Show that every operator in $B(H^n)$ has the form $S_{(T_{i,j})}$ for some matrix of operators $(T_{i,j})$ in $B(H)$. ({\it Hint}: For a given operator $S\in B(H^n)$, consider the operators $p_jSp_i$ where $p_i$ is the projection onto the $i$'th coordinate.) 
\end{exercise}

\begin{exercise}\label{e.borelstruct}
Show that the Borel structure on $B^1(H)$ induced by the weak and strong topologies coincide. Conclude that the Borel structure induced by the weak and strong topologies on $B(H)$ coincide, and is standard.
\end{exercise}

\begin{exercise}\label{e.masa}
(See example \ref{e.vnalgs}.1.) Let $(X,\mu)$ be a $\sigma$-finite measure space. Show that $L^\infty(X,\mu)$ is a maximal Abelian $*$-subalgebra of $B(L^2(X,\mu))$ (when identified with the set of multiplication operators $\{m_f:f\in L^\infty(X,\mu)\}$). Conclude that $L^\infty(X,\mu)$ is strongly (and weakly) closed, and so is a von Neumann algebra. ({\it Hint}: Take $T\in L^\infty(X,\mu)^\prime$, and argue that the function $T(1)$ is essentially bounded.)
\end{exercise}

\section*{Lecture 2}

\addtocounter{section}{1}
\setcounter{theorem}{0}

\begin{center}

{\it Comparison theory of projections, type classification, direct integral decomposition, and connections to the theory of unitary group representations.}

\end{center}

\medskip

An element $p\in B(H)$ is called a \textbf{projection} (or more precisely, an \emph{orthogonal} projection) if
\begin{equation*}
p^{2}=p^{\ast }=p.
\end{equation*}
The reader may easily verify that this is equivalent to (the more geometric definition) $p^2=p$ and $\ker(p)^\perp=\ran(p)$. ({\it Warning}: From now on, when we write ``projection'' we will always mean an orthogonal projection. This is also the convention in most of the literature.) For projections $p,q\in B(H)$, write  $p\leq q$ if $\ran(p)\subseteq\ran(q)$. We will say that $p$ is a {\bf subprojection} of $q$. (One may more generally define $T\leq S$ iff $S-T$ is a positive operator for any $T,S\in B(H)$. This definition agrees with our definition of $\leq$ on projections.)

\medskip

Unlike their C*-algebra brethren\footnote{There are examples of C*-algebras with no non-trivial projections!}, von Neumann algebras always have \emph{many} projections. The key fact is this: A von Neumann algebra $M$ contains all spectral projections (in the sense of the spectral theorem) of any normal operator $T\in M$. It follows from this that a von Neumann algebra is generated by its projections (see exercise \ref{e.specprojn} below). It is therefore natural to try to build a structure theory of von Neumann algebras around an analysis of projections.

\begin{definition}
Let $M$ be a von Neumann algebra. We let $P(M)$ be the set of projections in $M$. For $p,q\in P(M)$, we say that $p$ and $q$ are {\bf Murray-von Neumann
equivalent} (or simply {\bf equivalent}) if there is a partial isometry $u\in M$
such that $u^{\ast }u=p$ and $uu^{\ast }=q$. We write $p\sim q$. We will say that $p$ is {\bf subordinate} to $q$, written $p\precsim q$, if $p$ is equivalent to a subprojections of $q$
(i.e., there is $p'\leq q$ such that $p\sim p'$).

\end{definition}

\begin{example}
In $B(H)$, $p\precsim q$ iff the range of $p$ has dimension
smaller than or equal to the range of $q$. Thus, the ordering on $P\left(
B\left( H\right) \right)\left/ \sim \right. $ is linear, and isomorphic to $
\left\{ 0,1,...,n\right\} $ if $\dim H=n$ and to $\mathbb{N}\cup \left\{
\infty \right\} $ if $H$ is infinite dimensional.
\end{example}

The previous example highlights why it must be emphasized that the definition of $\sim$ is ``local'' to the von Neumann algebra $M$ (i.e., that we require $u\in M$ in the definition). Otherwise, $\sim$ would only measure the dimension of the range of the projection $p$, which does not depend on $M$ in any way. However, the idea that $\sim$ and $\precsim$ are somehow related to dimension \emph{relative to $M$} is essentially correct (though the precise details are subtle). For instance, in $L(\F_n)$ (where $\F_n$ is the free group on $n>1$ generators) it turns out that $P(L(\F_n))/\sim$ ordered linearly by $\precsim$, and is order-isomorphic to $[0,1]$. So in some sense, we need a continuous range to measure  dimension in $L(\F_n)$. More about this later.

\medskip

It is clear that $\precsim $ is a transitive relation with $I$ being a maximal element, and $0$ being the minimal. Furthermore, we have:

\begin{proposition}[``Schr\"oder-Bernstein for projections'']\label{p.sb}
If $p\precsim q$ and $q\precsim p$, then $p\sim q$.
\end{proposition}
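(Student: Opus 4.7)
The plan is to mimic the classical Schr\"oder-Bernstein argument for sets, the crucial difference being that bijections must be replaced by partial isometries lying in $M$. I would first isolate the following key lemma: if $s\leq p$ in $P(M)$ and $p\precsim s$, then $p\sim s$. The proposition then follows by a short reduction. Given witnesses $u_1,u_2\in M$ of $p\precsim q$ and $q\precsim p$ (so $u_1^*u_1=p$, $u_1u_1^*\leq q$, $u_2^*u_2=q$, $u_2u_2^*\leq p$), the product $v=u_2u_1\in M$ satisfies $v^*v=u_1^*qu_1=p$ (using $qu_1=u_1$, which follows from $u_1u_1^*\leq q$) and $vv^*\leq u_2u_2^*\leq p$. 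Thus $p\precsim u_2u_2^*$, and applying the lemma with $s=u_2u_2^*$ gives $p\sim u_2u_2^*\sim q$ (the second equivalence is witnessed by $u_2$ itself).

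To prove the lemma, fix $u\in M$ with $u^*u=p$ and $uu^*=r\leq s\leq p$. Set $p_0=p-s$ and define inductively $p_{n+1}=up_nu^*$. Using the identity $ru=u$ (a consequence of $uu^*u=u$), one checks by induction that each $p_n$ is a projection in $M$, that $p_n\leq p$ for all $n$ with $p_n\leq r\leq s$ for $n\geq 1$, and that the $p_n$ are pairwise orthogonal. Moreover, $up_n\in M$ is a partial isometry with $(up_n)^*(up_n)=p_n$ and $(up_n)(up_n)^*=p_{n+1}$, so $p_n\sim p_{n+1}$.

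Let $P=\sum_{n\geq 0}p_n$ in the strong operator topology; this sum converges (the partial sums form an increasing sequence of projections bounded by $p$) and lies in $M$ since $M$ is strongly closed. Consider the candidate
\begin{equation*}
w=uP+(p-P)\in M,
\end{equation*}
which plays the role of the classical Schr\"oder-Bernstein bijection: it acts as $u$ along the orbit $p_0\mapsto p_1\mapsto\cdots$ and as the identity on the remainder $p-P$. The main computation is to verify that $w$ is a partial isometry with $w^*w=p$ and $ww^*=s$. For the source side, the summands $up_n$ have pairwise orthogonal sources $p_n$, all orthogonal to $p-P$, and summing to $p$. For the range side, the $up_n$ have pairwise orthogonal ranges $p_{n+1}$, orthogonal to $p-P=s-\sum_{n\geq 1}p_n$ (using $p_0=p-s$), and summing to $\sum_{n\geq 1}p_n+(s-\sum_{n\geq 1}p_n)=s$.

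The main obstacle is the careful orthogonality bookkeeping for the sequence $\{p_n\}$ and the verification that the cross terms in $w^*w$ and $ww^*$ vanish. The key algebraic facts driving the induction are $(p-s)u=0$ (from $su=sru=ru=u$) and $p\,p_{j-1}=p_{j-1}$ for $p_{j-1}\leq p$, which together let one propagate both orthogonality and the partial isometry calculations through the iteration. Once these identities are established, assembling $w$ and executing the reduction to the lemma are routine.
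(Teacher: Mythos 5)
Your proof is correct: the reduction via $v=u_2u_1$ to the key lemma ($s\leq p$ and $p\precsim s$ imply $p\sim s$), and the tower argument with $p_0=p-s$, $p_{n+1}=up_nu^*$ and $w=uP+(p-P)$, all check out, including the orthogonality bookkeeping and the computations $w^*w=p$, $ww^*=s$. The paper gives no proof of Proposition \ref{p.sb} --- it is left to the reader as the exercise immediately following it --- and what you have written is precisely the standard Murray--von Neumann argument that the exercise intends.
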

\begin{exercise}
Prove Proposition \ref{p.sb}.
\end{exercise}

\begin{definition}
Let $M$ be a von Neumann algebra. A projection $p\in M$ is said to be

\begin{itemize}
\item \textbf{finite} if it is not equivalent to a proper subprojection
of itself;

\item \textbf{infinite} if it is not finite;

\item \textbf{purely infinite} if it has no nonzero finite subprojections;

\item \textbf{semifinite} if it is not finite, but is the supremum
of an increasing family of finite projections.

\item \textbf{minimal} if it is non-zero, and has no proper non-zero subprojections.
\end{itemize}

A von Neumann algebra $M$ will be called finite (infinite, purely infinite, or semifinite) if the identity $I\in M$ is finite (respectively, infinite, purely infinite, or semifinite).
\end{definition}

\begin{proposition}\label{p.factor}
Let $M$ be a von Neumann algebra. Then the following are equivalent:
\begin{enumerate}
\item The \emph{center} $\mathcal{Z}(M) =M\cap M^{\prime}$ of $M$ consists of scalar multiples of the identity $I$, (i.e., $\mathcal{Z}(M)=\C I$).
\item $P(M)/\sim$ is linearly ordered by $\precsim$.
\end{enumerate}
\end{proposition}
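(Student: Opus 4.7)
For $(2) \Rightarrow (1)$ I argue contrapositively. Suppose $\mathcal{Z}(M) \neq \C I$ and pick a nontrivial central projection $z$; I claim $z$ and $I - z$ are $\precsim$-incomparable. Indeed, if $u \in M$ were a partial isometry with $u^{\ast}u = z$ and $p := uu^{\ast} \leq I - z$, then $u = pu$ and, using centrality, $u = u(u^{\ast}u) = uz = zu$, so $u = p(zu) = (pz)u = 0$ because $pz \leq (I-z)z = 0$. This would force $z = u^{\ast}u = 0$, contradicting nontriviality; by symmetry $I - z \not\precsim z$ as well, so $P(M)/\sim$ is not linearly ordered.

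For $(1) \Rightarrow (2)$ I fix $p, q \in P(M)$ and apply Zorn's lemma to obtain a maximal family $\{u_i\}_{i \in I}$ of partial isometries in $M$ whose support projections $u_i^{\ast}u_i \leq p$ are pairwise orthogonal and whose range projections $u_i u_i^{\ast} \leq q$ are pairwise orthogonal. Setting $e = \sum_i u_i^{\ast}u_i$, $f = \sum_i u_i u_i^{\ast}$, and $u = \sum_i u_i$ (all strongly convergent by orthogonality, and lying in the strongly closed algebra $M$), I obtain a partial isometry $u \in M$ with $u^{\ast}u = e$ and $uu^{\ast} = f$, so $e \sim f$. Writing $p_0 = p - e$ and $q_0 = q - f$, it suffices to prove that $p_0 = 0$ or $q_0 = 0$, since the first case yields $p = e \sim f \leq q$, hence $p \precsim q$, and the second gives $q \precsim p$ symmetrically.

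The main obstacle will be the key claim that in a factor, any two nonzero projections $p_0, q_0 \in P(M)$ satisfy $q_0 M p_0 \neq 0$. Granted this, maximality of the Zorn family is violated: choosing $T \in M$ with $q_0 T p_0 \neq 0$ and taking its polar decomposition $q_0 T p_0 = v |q_0 T p_0|$, the earlier polar-decomposition exercise places $v$ in $M$, and one checks $v \neq 0$ with $v^{\ast}v \leq p_0$ and $vv^{\ast} \leq q_0$, contradicting maximality. To prove the claim I consider the closed subspace $K = \overline{M p_0 H}$: it is $M$-invariant by construction, and $M'$-invariant because for $T' \in M'$ and $S p_0 \xi \in M p_0 H$ we have $T' S p_0 \xi = S p_0 T' \xi \in M p_0 H$. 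Hence $P_K$ commutes with both $M$ and $M'$, so $P_K \in M' \cap M'' = \mathcal{Z}(M) = \C I$ by the double commutant theorem (Theorem \ref{t.dct}) together with our hypothesis. Since $p_0 \neq 0$ forces $P_K \neq 0$, we get $P_K = I$, i.e., $M p_0 H$ is dense in $H$; then $q_0 M p_0 = 0$ would make $q_0$ vanish on a dense subspace, contradicting $q_0 \neq 0$.
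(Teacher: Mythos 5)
Your proof is correct and complete. The paper itself gives no proof of Proposition \ref{p.factor}: it delegates it to Exercise \ref{e.factor}, which points to \cite[Theorem 6.2.6]{kadring2}, i.e., the general \emph{comparison theorem}: for any $p,q\in P(M)$ there is a central projection $z$ with $zp\precsim zq$ and $(I-z)q\precsim (I-z)p$, from which (1)$\Rightarrow$(2) is immediate when the only central projections are $0$ and $I$. Your argument is essentially that proof specialized to factors: the Zorn's-lemma exhaustion by partial isometries with orthogonal supports and ranges is exactly the Kadison--Ringrose device, but where they run the maximality argument through \emph{central carriers} (showing the central carriers of $p_0$ and $q_0$ are not orthogonal, hence $q_0Mp_0\neq 0$), you replace this by the direct observation that $\overline{Mp_0H}$ is reducing for both $M$ and $M'$, so its projection lies in $M'\cap M''=\mathcal Z(M)=\C I$ and must be $I$ --- which is precisely the statement that in a factor every nonzero projection has central carrier $I$. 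What your route buys is self-containedness at the level of these lectures: it uses only the double commutant theorem (Theorem \ref{t.dct}) and the polar decomposition exercise, and never needs the notion of central carrier; what the general comparison theorem buys is the statement for arbitrary von Neumann algebras, which is what one wants later for the direct integral theory. Your (2)$\Rightarrow$(1) contrapositive is also the standard one and the computation $u=(pz)u=0$ is right; the only micro-step you elide is the \emph{existence} of a nontrivial central projection when $\mathcal Z(M)\neq\C I$ --- this follows since $\mathcal Z(M)$ is itself a von Neumann algebra and so is generated by its projections (Exercise \ref{e.specprojn}), so if $0$ and $I$ were its only projections it would equal $\C I$. Worth one line, but not a gap.
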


The proof is outlined in exercise \ref{e.factor}.

\begin{definition}
A von Neumann algebra $M$ is called a \emph{factor} if (1) (and therefore (2)) of the previous proposition holds.
\end{definition}

It is not hard to see that $M_n(\C)$ and $B(H)$ are factors. Another source of examples are the group von Neumann algebras $L(\Gamma)$, when $\Gamma$ is an {\it infinite conjugacy class} (or {\it i.c.c.}) group, meaning that the conjugacy class of each $\gamma\in\Gamma\setminus\{1\}$ is infinite.

The next theorem shows that factors constitute the building blocks of von Neumann algebras, as all (separably acting, say) von Neumann algebras can be decomposed into a generalized direct sum (i.e., integral) of factors. This naturally shifts the focus of the theory to factors, rather than general von Neumann algebras. Quite often, a general theorem that can be proven for factors can then be extended to all von Neumann algebras using the direct integral decomposition.

\begin{theorem}
Let $M$ be a von Neumann algebra. Then there is a standard $\sigma $-finite
measure space $\left( X,\mu \right)$, a Borel field $\left( H_{x}\right)
_{x\in X}$ of Hilbert spaces and a Borel field $(M_x)_{x\in X}
$ of von Neumann algebras in $B(H_x)$ such that: $H$ is isomorphic to $\int_X H_x d\mu(x)$, and identifying $H$ and $\int_X H_x d\mu(x)$, we have
\begin{enumerate}
\item $M_x$ is a factor for all $x\in X$;
\item $M=\int_{X} M_{x}d\mu(x)$;
\item $\mathcal Z(M)=L^\infty(X,\mu)$.
\end{enumerate}
Moreover, this decomposition is essentially unique.
\end{theorem}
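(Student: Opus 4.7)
The plan is to build the decomposition out of the center $\mathcal{Z}(M)$ itself. Since $\mathcal{Z}(M)$ is an abelian von Neumann algebra, by the classification mentioned in example \ref{e.vnalgs}.(1), it is $*$-isomorphic to $L^{\infty}(X,\mu)$ for some standard $\sigma$-finite measure space $(X,\mu)$. This is the candidate parameter space, and it immediately supplies (3). The goal is then to realize $H$ as a direct integral $\int_X H_x\, d\mu(x)$ over this same $(X,\mu)$ so that $\mathcal Z(M)$ acts diagonally, and to show that the whole algebra $M$ decomposes along this integral into factors.

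First I would use the abelian version of the spectral theorem to diagonalize $\mathcal{Z}(M)$ on $H$: choose a countable weakly dense sequence in $M$ together with a separating vector for $\mathcal Z(M)$, and build a measurable field $(H_x)_{x\in X}$ together with a unitary $U\colon H\to\int_X H_x\, d\mu(x)$ such that for $f\in L^\infty(X,\mu)\cong \mathcal Z(M)$, the operator $UfU^*$ is the decomposable operator $\int_X f(x)\, I_{H_x}\, d\mu(x)$. Next, since $M\subseteq \mathcal Z(M)'$, and since a standard fact about direct integrals says that the decomposable operators are precisely the commutant of the diagonal $L^\infty(X,\mu)$, every $T\in M$ is of the form $\int_X T_x\, d\mu(x)$. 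Choosing a countable strongly dense $*$-subalgebra $\mathcal C\subseteq M$ containing $I$, I would then set $M_x=\{T_x:T\in\mathcal C\}''$ almost everywhere; these assemble into a measurable field of von Neumann algebras, and a routine check (using that the decomposition of operators is essentially unique) gives $M=\int_X M_x\, d\mu(x)$, establishing (2).

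The main obstacle is (1): that $M_x$ is almost surely a factor. The strategy is a contradiction: suppose that on a Borel set $E\subseteq X$ of positive measure the fiber center $\mathcal Z(M_x)$ has a nontrivial spectral projection. Using a Kuratowski--Ryll-Nardzewski-type measurable selection theorem applied to the Borel set
\[
\{(x,p)\in E\times B^1(H_x): p=p^*=p^2,\ p\in M_x\cap M_x',\ p\neq 0,\ p\neq I_{H_x}\},
\]
one selects a measurable field $(p_x)$ of nontrivial central projections of $M_x$ on $E$, extended by $0$ off $E$, and forms the decomposable operator $p=\int_X p_x\, d\mu(x)$. Because each $p_x$ lies in $M_x\cap M_x'$, and because the commutant of a direct integral decomposes fiberwise as $M'=\int_X M_x'\, d\mu(x)$, one obtains $p\in M\cap M'=\mathcal Z(M)$. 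But elements of $\mathcal Z(M)$, under our identification, act as scalars on each $H_x$, so $p_x$ is a scalar projection a.e.\ on $E$, contradicting $p_x\notin\{0,I_{H_x}\}$. The delicate point is verifying the fiberwise decomposition $M'=\int_X M_x'\, d\mu(x)$, which I expect to be the technical heart of the argument and which relies on the careful choice of the countable dense $\mathcal C$.

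For essential uniqueness, given a second decomposition $M=\int_Y N_y\, d\nu(y)$ with each $N_y$ a factor and $\mathcal Z(M)\cong L^\infty(Y,\nu)$, the $*$-isomorphism between $L^\infty(X,\mu)$ and $L^\infty(Y,\nu)$ coming from the two identifications with $\mathcal Z(M)$ induces a measure-class preserving Borel isomorphism $X\to Y$ modulo null sets. Under this isomorphism the diagonalizations of $L^\infty$ on $H$ must agree up to unitary equivalence of measurable fields (uniqueness of spectral multiplicity for abelian algebras), and then the fiber algebras $M_x$ and $N_y$ are forced to correspond almost everywhere, yielding the stated uniqueness.
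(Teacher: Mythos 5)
The paper does not actually prove this theorem: it is stated with the remark that the proof ``is rather involved'' and a pointer to \cite{oanielsen} (see also \cite{blackadar,dixmier}), so your proposal can only be measured against the standard reduction-theory argument in those references --- which it reproduces faithfully. Every step where you commit to details is right: an abelian von Neumann algebra on a separable $H$ admits a separating vector, so $\mathcal Z(M)\cong L^\infty(X,\mu)$ can be diagonalized to realize $H=\int_X H_x\,d\mu(x)$ with $\mathcal Z(M)$ the diagonal algebra (giving (3)); the commutant of the diagonal algebra is exactly the algebra of decomposable operators, so $M\subseteq\mathcal Z(M)'$ decomposes operator by operator; and defining $M_x$ from a countable strongly dense $*$-subalgebra $\mathcal C$ is the standard device. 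You also correctly identify the commutant theorem $M'=\int_X M_x'\,d\mu(x)$ as the technical heart: it is, and together with the measurability bookkeeping (choosing $x\mapsto T_x$ Borel simultaneously for all $T\in\mathcal C$, and checking that $M_x$ is a.e.\ independent of the choice of $\mathcal C$) it accounts for essentially all the length of the proof the paper omitted.

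Two refinements. First, for (1) your measurable-selection contradiction is workable but heavier than necessary, since making the set $\{(x,p): p\in M_x\cap M_x',\ p=p^*=p^2\}$ Borel itself requires fixing generating sequences for the fields $(M_x)$ and $(M_x')$; once both $M=\int_X M_x\,d\mu(x)$ and $M'=\int_X M_x'\,d\mu(x)$ are in hand, the cleaner route is $\mathcal Z(M)=M\cap M'=\int_X\left(M_x\cap M_x'\right)d\mu(x)$, and comparing with the decomposition of the diagonal algebra, whose fibers are $\C I_{H_x}$, via the essential uniqueness of fibers forces $\mathcal Z(M_x)=\C I_{H_x}$ a.e.\ with no selection theorem at all. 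Second, if one reads (2) as a genuine equality of algebras (rather than merely the inclusion $M\subseteq\{\text{decomposable }T:\ T_x\in M_x \text{ a.e.}\}$, which is how the paper glosses it), the reverse inclusion $\int_X M_x\,d\mu(x)\subseteq M$ again invokes the commutant theorem, so it is not quite the ``routine check'' you call it. As a blueprint, though, this is exactly the proof the paper declined to include; what separates it from a complete argument is precisely the ingredient you flagged.
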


The proof is rather involved; it is given in full detail in \cite{oanielsen} (see also \cite{blackadar,dixmier}). Some comments about the theorem are in place, however. A Borel field of Hilbert spaces is nothing but a standard Borel space $X$ with a partition $X=\bigsqcup_{n=0,1,2,\ldots,\N} X_n$, and the vectors in $H$ are Borel functions $f:X\to\ell^2(\N)$, where for $x\in X_n$ we have that $f(x)$ is in the space generated by the first $n$ standard basis vectors of $\ell^2(\N)$, and the inner product is given by $\langle f,g\rangle =\int \langle f(x),g(x)\rangle d\mu(x)$. Equality $M=\int M_x d\mu(x)$ means that every operator $T\in M$ can be written as $T=\int T_xd\mu(x)$ (i.e., $(Tf)=T_x(f(x))$ $\mu$-a.e.), where $T_x\in M_x$, and $x\mapsto T_x$ is Borel (in the obvious sense) w.r.t. the $\sigma$-algebra generated by the weakly open sets.

\begin{definition}[Type classification of factors]
Let $M$ be a factor. We say that $M$ is
\begin{enumerate}[\indent (a)]

\item type $\I_n$ if it is finite and $P(M)/\!\sim$ is order isomorphic to $n=\{0,1,\ldots, n-1\}$ with the usual order;

\item type $\I_\infty$ if it is infinite and $P(M)
/\!\sim$ is order isomorphic to $\mathbb{N}\cup \{\infty
\}$ (where $n<\infty$ for all $n\in\N$);

\item type $\II_{1}$ if it is finite and $P(M)/\!\sim$ is order isomorphic to $[0,1]$;

\item type $\II_{\infty }$ if it is semifinite and $P(M)/\!\sim$ is order isomorphic to $[ 0,\infty]$;

\item type $\III$ if it is purely infinite and $P(M)/\!\sim$ is order isomorphic to $\left\{ 0,\infty \right\} $.
\end{enumerate}
\end{definition}

The reader may object that $[0,1]$ and $[0,\infty]$ are order-isomorphic, and so are $\{0,1\}$ and $\{0,\infty\}$. The intention is that the $\infty$ indicates that the $\precsim$-maximal projection is infinite, and so the notation contains additional information. It is clear that any factor $M$ can be at most one of the types, but even more so, the list is in fact exhaustive and complete:

\begin{theorem}
Every factor is either type $\I_n$, $\I_\infty$, $\II_1$ and $\II_\infty$ or type $\III$. Moreover, there is at least one factor of each type.
\end{theorem}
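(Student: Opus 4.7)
My plan is to handle the classification and the exhibition of examples separately, using as the main tool the linear order on $P(M)/\!\sim$ from Proposition \ref{p.factor}. The first dichotomy is whether $M$ contains a minimal projection. If it does, pick a minimal $p$; since Murray-von Neumann equivalence preserves minimality (transport projections along the implementing partial isometry) and $\precsim$ is linear, every nonzero projection contains a minimal subprojection equivalent to $p$. A Zorn's lemma argument produces a maximal pairwise-orthogonal family $\{p_i\}_{i \in J}$ with $p_i \sim p$ and $\sum p_i = I$ (the complement cannot contain another minimal subprojection by maximality combined with linearity of $\precsim$). Separability of $H$ forces $|J|$ finite or countably infinite. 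The same cut-and-count procedure applied to an arbitrary $q$ shows every projection is an orthogonal sum of projections each $\sim p$, and the number of summands (up to $\sim$) is a complete invariant by Proposition \ref{p.sb}, giving the desired order isomorphism and hence type $\I_n$ or $\I_\infty$; in the infinite case, a shift-type partial isometry rearranging the $p_i$ witnesses $I \sim I - p_{i_0}$, so $I$ is infinite as required.

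If $M$ has no minimal projection, I split by finiteness of $I$. If $I$ is finite (type $\II_1$ case), the technical heart is the construction of a faithful normal tracial state $\tau$ on $M$ via the Murray-von Neumann maximal-trace construction or Dixmier's averaging theorem; then $p\mapsto \tau(p)$ is a well-defined, strictly increasing map $P(M)/\!\sim\; \to [0,1]$, and surjectivity onto $[0,1]$ follows from the absence of minimal projections (giving arbitrarily small subprojections) combined with normality and a monotone-limit argument for completeness. If $I$ is infinite with a nonzero finite subprojection $p$ (type $\II_\infty$), then a maximal orthogonal family $\{p_i\}$ with each $p_i\sim p$ has $\sum p_i = I$ up to a finite remainder, the corner $pMp$ is a $\II_1$ factor, and extending its trace gives a semifinite trace on $M$ yielding $P(M)/\!\sim\; \cong [0,\infty]$. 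If $M$ has no nonzero finite projection (type $\III$), every nonzero $p$ is infinite, hence admits a proper $p'<p$ with $p'\sim p$; iterating the halving lemma inside each nonzero $p$ and using linearity of $\precsim$ shows any two nonzero projections $p,q$ satisfy $p\precsim q$ and $q\precsim p$, so by Proposition \ref{p.sb} we get $p\sim q$, giving $P(M)/\!\sim\; = \{0,\infty\}$.

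For existence, the standard witnesses are: $\I_n$ by $M_n(\C)$ acting on $\C^n$; $\I_\infty$ by $B(H)$ with $H$ separable infinite-dimensional; $\II_1$ by $L(\Gamma)$ for an i.c.c.\ countable group such as $\F_2$ or $S_\infty$ (finitely supported permutations of $\N$), using the trace from Examples \ref{e.vnalgs}(3), with the i.c.c.\ property forcing $\mathcal{Z}(L(\Gamma)) = \C I$; $\II_\infty$ by the tensor product $L(\Gamma)\otimes B(H)$ for the same $\Gamma$ and infinite-dimensional $H$, which yields a factor of the stated type; and $\III$ by a Krieger-type crossed product $L^\infty(X,\mu)\rtimes\Gamma$ for a free, nonsingular action of $\Gamma$ on $(X,\mu)$ admitting no equivalent $\sigma$-finite invariant measure, as developed in Lecture 3. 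The principal obstacles are (i) constructing the trace in the $\II_1$ case, which is the technical heart of the theory and typically the longest step in any textbook treatment, and (ii) verifying that the crossed-product candidate is actually of type $\III$ (no nonzero finite projection), which is classically accessed via Tomita-Takesaki modular theory or, in the crossed-product setting, via a direct ergodic-theoretic argument exploiting the nonexistence of an invariant measure.
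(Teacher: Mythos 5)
Your outline is correct in its broad strategy, but note first what the paper actually does with this theorem: it offers \emph{no} proof at all, remarking only that ``the types list all possibilities is not too hard to see,'' that existence ``is much harder,'' and deferring examples to Lectures 3 and 4. So the comparison is with that deferred plan rather than a written argument. Your exhaustiveness argument is the standard one and is sound: the dichotomy on minimal projections, transport of minimality along partial isometries, maximal orthogonal families plus linearity of $\precsim$ from Proposition \ref{p.factor} and Schr\"oder--Bernstein (Proposition \ref{p.sb}) for the type $\I$ count, the Murray--von Neumann trace for $\II_1$ (which the paper likewise only cites, so invoking it is fair), the corner $pMp$ and trace extension for $\II_{\infty}$, and halving plus comparability for type $\III$. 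Two small points you gloss over: distinguishing $m\neq n$ in the type $\I$ count needs the lemma that a finite orthogonal sum of finite (here minimal) projections is finite, and the type $\III$ argument that every infinite projection is equivalent to $I$ uses countable decomposability, i.e.\ separability of $H$ --- harmless here, but worth making explicit. Your witnesses for types $\I_n$, $\I_\infty$, $\II_1$ (an i.c.c.\ group, exactly as the paper suggests with $L(\F_n)$), and $\II_\infty$ (tensoring with $B(H)$, using the paper's Lecture 4 remark that tensor products of factors are factors) all match the paper's intended examples.

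The one genuine slip is in your type $\III$ witness. As stated --- ``$L^\infty(X,\mu)\rtimes\Gamma$ for a free, nonsingular action admitting no equivalent $\sigma$-finite invariant measure'' --- the construction need not even be a factor: the factoriality theorem of Lecture 3 requires the action to be a.e.\ free \emph{and ergodic}, and without ergodicity a disjoint union of two ergodic pieces produces a nontrivial center. You clearly have ergodicity in mind (you mention an ``ergodic-theoretic argument'' later), but the hypothesis must appear in the statement, since the absence of an invariant measure alone does nothing for the center. Moreover, you never exhibit a concrete action of this kind, and the existence of a free ergodic nonsingular action with no equivalent $\sigma$-finite invariant measure is itself a nontrivial classical fact --- this is precisely where the paper's warning that existence ``is much harder'' bites. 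The paper's own route in Lecture 4 is different and more concrete: the Powers factors $\mathcal R_\lambda$, $0<\lambda<1$, are ITPFI$_2$ factors of type $\III$ (equivalently, via the exercise there, $M(E_0)$ for the odometer with a suitably skewed product measure), though verifying their type still requires an argument (classically via the asymptotic ratio set or modular theory, as you note). So: exhaustiveness is fine modulo the two standard lemmas flagged above; for existence, repair the type $\III$ clause by adding ergodicity and naming an explicit example, e.g.\ the skewed odometer or $\mathcal R_\lambda$.
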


That the types list all possibilities is not too hard to see. That there is an example of each type is much harder. (We will see examples of all types in the last lectures.)

Up to isomorphism, the algebra $M_n(\C)$ of $
n\times n$ complex matrices is the only type $I_{n}$ von Neumann algebra,
while $B(H)$ for $H$ infinite dimensional (and separable!) is the only type $
I_{\infty}$ factor. Such an easy description of the isomorphism classes of type $\II$ and type $\III$ factors is not possible: As we will see below, the type $\II$ and $\III$ factors cannot even be classified up to isomorphism by countable structures!

The cornerstone of the theory of $\II_1$ factors (or, more generally, finite von Neumann algebras) is the existence of a (unique) faithful trace, which is continuous on the unit ball. We have already seen an example of a trace when we discussed $L(\Gamma)$.  The trace on a $\II_1$ factor is an invaluable technical tool when working with projections in a $\II_1$ factor.

\begin{theorem}[Murray-von Neumann]

(A) Every $\II_1$ factor $M$ has a faithful normal {\bf trace}, i.e., a positive linear functional $\tau:M\to\C$ which is
\begin{enumerate}
\item {\bf normal}, meaning that $\tau$ is weakly continuous on the unit ball of $M$;
\item {\bf faithful}, meaning that for $x\in M$, $x=0$ iff $\tau(x^*x)=0$;

\item {\bf tracial},  meaning that $\tau(xy)=\tau(yx)$ for all $x,y\in M$.
\end{enumerate}
The trace is unique up to a scalar multiple.

(B) Every $\II_\infty$ factor admits a faithful normal semifinite trace defined on the set $M^+$ of positive operators in $M$. That is, there is an additive map $\tau:M^+\to [0,\infty]$ which satisfy $\tau(rx)=r\tau(x)$ for all $r>0$, which is continuous (in the natural sense) on the unit ball of $M^+$ with respect to the weak topology, and which is faithful and tracial. The semifinite trace on a $\II_\infty$ factor is unique up to multiplication by a scalar.
\end{theorem}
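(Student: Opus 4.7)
The plan is to build the trace from a \emph{relative dimension function} on projections, mirroring the way Lebesgue measure extends to an integral. I would first define $d:P(M)\to[0,1]$ to be the (unique, up to normalization) order isomorphism $P(M)/\!\sim\,\to[0,1]$ provided by the type classification, normalized so $d(I)=1$ and $d(0)=0$. The heart of the proof is showing that $d$ is additive on orthogonal projections: if $pq=0$, then $d(p+q)=d(p)+d(q)$. This rests on a \emph{halving lemma} — every non-zero projection $p\in M$ splits as $p=p_1+p_2$ with $p_1\perp p_2$ and $p_1\sim p_2$ — proved by a Zorn's lemma argument using that $M$ is a $\II_1$ factor (so $P(M)/\!\sim$ has no atoms) together with the comparison theorem, which gives that any two projections are $\precsim$-comparable in a factor. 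Iterating the halving lemma produces, for each dyadic rational $k/2^n\in[0,1]$, pairwise orthogonal equivalent projections summing to $I$, pinning down the values of $d$ on a dense set and forcing additivity everywhere by the Schr\"oder-Bernstein proposition \ref{p.sb}.

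Next I would extend $d$ to a positive linear functional $\tau$ on $M$ via spectral theory. For self-adjoint $T\in M$ with $\|T\|\leq 1$, the spectral theorem gives a projection-valued measure $E(\cdot)$ valued in $M$ (since $M$ is a von Neumann algebra and hence contains all spectral projections of its normal elements). Define $\tau(T)=\int_{[-1,1]}\lambda\, d\mu(\lambda)$ where $\mu(A):=d(E(A))$ is a finite Borel measure by the additivity just established, and extend to arbitrary $T\in M$ by $\tau(T)=\tau(\tfrac{T+T^*}{2})+i\tau(\tfrac{T-T^*}{2i})$. Linearity follows from approximating $T$ and $S$ by finite linear combinations of orthogonal spectral projections and invoking additivity of $d$.

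The three trace axioms are then verified as follows. \emph{Faithfulness}: if $\tau(T^*T)=0$, the spectral measure of $T^*T\geq 0$ forces $d(E_{T^*T}((0,\infty)))=0$, hence $T^*T=0$ and $T=0$. \emph{Traciality}: for any unitary $u\in M$ and projection $p\in M$, the element $up$ is a partial isometry in $M$ witnessing $p\sim upu^*$, so $d(upu^*)=d(p)$; this promotes to $\tau(uxu^*)=\tau(x)$ by spectral approximation, and then to $\tau(xy)=\tau(yx)$ using that every element of a von Neumann algebra is a linear combination of at most four unitaries. \emph{Normality}: on spectral projections $\tau$ is weakly continuous by construction, and a monotone/approximation argument on the unit ball (using that $T\mapsto E_T(A)$ is sufficiently regular) extends this. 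For \emph{uniqueness}, any other faithful normal trace $\tau'$ restricts to a $\sim$-invariant additive function on $P(M)$ (since $\tau'(u^*u)=\tau'(uu^*)$), hence factors through $P(M)/\!\sim\,\cong[0,1]$ and must equal $\tau'(I)\cdot d$; agreement on all of $M$ then follows by spectral approximation.

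Part (B) I would reduce to (A) by the structural fact that any $\II_\infty$ factor is isomorphic to $N\,\overline{\otimes}\,B(K)$ for a $\II_1$ factor $N$ and separable Hilbert space $K$, and taking $\tau=\tau_N\otimes\mathrm{Tr}_{B(K)}$ with $\mathrm{Tr}$ the standard semifinite trace on $B(K)$; uniqueness then follows from uniqueness of both factors. The main obstacle throughout is Step~1 — proving the halving lemma and establishing additivity of the dimension function — since this is where the genuine structural content of $\II_1$ factors is used; all subsequent steps are essentially spectral-theoretic bookkeeping.
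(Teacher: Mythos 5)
The paper states this theorem without proof (it is a survey; the result is attributed to Murray--von Neumann and the classical literature), so your proposal can only be measured against the standard arguments. Your overall architecture --- dimension function on $P(M)$ via halving and comparison, then extension to a trace, with (B) reduced to (A) through $M\simeq N\,\overline{\otimes}\,B(K)$ --- is indeed the classical route. But there are two problems, one fixable and one that sinks the proof as written. The fixable one: you cannot define $d$ as ``the (unique, up to normalization) order isomorphism $P(M)/\!\sim\,\to[0,1]$,'' because order isomorphisms of $[0,1]$ fixing the endpoints form a huge group, and a generic one is \emph{not} additive on orthogonal projections; additivity is exactly what singles out the dimension function. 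You must instead \emph{construct} $d$ --- assign value $k/2^n$ to sums of $k$ of the $2^n$ equivalent orthogonal projections produced by iterated halving, and extend to arbitrary $p$ by comparison ($\precsim$) with these dyadic projections --- and then prove additivity and $\sim$-invariance for that specific function. Your halving lemma and Schr\"oder--Bernstein ingredients are the right tools for this, so the gap here is in the logic of the definition, not in the toolkit.

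The serious gap is linearity of $\tau$. Your claim that ``linearity follows from approximating $T$ and $S$ by finite linear combinations of orthogonal spectral projections and invoking additivity of $d$'' only works when $T$ and $S$ \emph{commute}, since only then do their spectral projections live in a common abelian von Neumann algebra admitting a joint refinement. For non-commuting self-adjoint $T,S$ the spectral projections of $T+S$ bear no usable relation to those of $T$ and of $S$, and additivity of $d$ on orthogonal projections gives you nothing. This is precisely the hard core of the Murray--von Neumann theorem --- historically handled by delicate continuity estimates for $d$, and in modern treatments by the Dixmier averaging argument (the norm-closed convex hull of $\{uxu^*:u\in U(M)\}$ meets the center, yielding a center-valued trace) or by related fixed-point methods. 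Your closing remark that everything after the halving lemma is ``spectral-theoretic bookkeeping'' misplaces the difficulty: the bookkeeping proves additivity only on abelian subalgebras, and the passage to full linearity is a genuinely separate theorem. Secondary but real: normality requires \emph{complete} additivity of $d$ (another nontrivial step you gloss), and your reduction in (B) needs a proof of the structure fact $M\simeq pMp\,\overline{\otimes}\,B(K)$ for a finite projection $p$, though that one is a routine application of the comparison theory you already invoke.
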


The trace gives us a measure of the size of projections, much like the dimension function $p\mapsto\dim(\ran(p))$ does on the projections in $B(H)$. However, for a $\II_1$ factor $M$ we have that $\tau(P(M))=[0,\tau(I)]$, and so the projections on a $\II_1$ factor have ``continuous dimension'' (understood locally in the $\II_1$ factor, of course). Because of the trace property, two projections $p,q\in P(M)$ are equivalent iff $\tau(p)=\tau(q)$. The existence of a trace characterizes the finite factors: $M$ is finite iff there is a trace as above on $M$.

\begin{exercise}\label{e.specprojn}
Let $M$ be a von Neumann algebra, and let $T\in M$ be a normal operator. Show that all the \emph{spectral projections}, i.e., projections in the Abelian von Neumann algebra generated by $T$, belong to $M$. Conclude that $M$ is generated by its projections, i.e., $M=P(M)^{\prime\prime}$. ({\it Hint}: Any operator can be written $T=\Re(T)+i\Re(-iT)$, where $\Re(T)=\frac 1 2 (T+T^*)$.)
\end{exercise}

\begin{exercise}
Prove that if $p$ and $q$ are finite projections in a factor $M$ and $p\sim q$, then $1-p\sim 1-q$. Conclude that there is a unitary operator $U\in M$ such that $U^*p U=q$. ({\it Hint}: Quickly dispense with the case that $M$ is infinite.)
\end{exercise}

\begin{exercise}
If $H$ is infinite dimensional, then $B(H)$ does not admit a trace which is weakly continuous on $B^1(H)$.
\end{exercise}

The next two exercises are harder.

\begin{exercise}
Any two elements $p,q$ of $P\left( M\right) $ have $\inf $ and $\sup $ in $
P\left( M\right) $ with respect to the relation $\leq $. Denoting these by $
p\wedge q$ and $p\vee q$ respectively, one has
\begin{equation*}
p\vee q-p\sim q-p\wedge q.
\end{equation*}
This is known as \emph{Kaplanski's identity.} ({\it Hint:} Consider the domain and range $(1-q)p$. A proof can be found in \cite[Theorem 6.1.7]{kadring2}.)
\end{exercise}

\begin{exercise}\label{e.factor}
Prove Proposition \ref{p.factor}. (A proof can be found in \cite[Theorem 6.2.6]{kadring2}.)
\end{exercise}

\subsection*{{\sc Applications to unitary representations.}} To illustrate the usefulness of the notions of factors and comparison of projections, we briefly turn to study unitary representation of countable discrete groups. What is said in this section applies almost without change to unitary representations of locally compact groups and representations of C*-algebras as well.

\begin{definition}
Let $\Gamma $ be a countable discrete group. A \textbf{unitary
representation} of $\Gamma $ on $H$ is a homomorphism $\pi :\Gamma
\to U(H)$, where $U(H)$ is the group of unitary operators in $B(H)$.
\end{definition}

For notational convenience, we will write $\pi_\gamma$ for $\pi(\gamma)$. Let $\pi$ and $ \sigma$ be unitary representations of $\Gamma$ on Hilbert spaces $H_0,H_1$ respectively. We define
$$
R_{\pi,\sigma}=\{T\in L(H_0,H_1): (\forall \gamma\in\Gamma) T\pi_\gamma=\sigma_\gamma T\},
$$
where $L(H_0,H_1)$ is the set of bounded linear maps $H_0\to H_1$. A map $T\in R_{\pi,\sigma}$ is called an \emph{intertwiner} of $\pi$ and $\sigma$. We say that $\pi$ and $\sigma$ are {\bf disjoint}, written $\pi\perp\sigma$, if $R_{\pi,\sigma}=\{0\}$. We let $R_\pi=R_{\pi,\pi}$. Note that $R_\pi=\{\pi_\gamma:\gamma\in\Gamma\}^{\prime}$, thus $R_\pi$ is a von Neumann algebra.

If $p\in P(R_\pi)$, then $\ran(p)$ is a $\pi$-invariant closed subspace in $H$; conversely, a projection onto any closed $\pi$-invariant subspace must be in $R_\pi$. Note further that if $p,q\in P(R_\pi)$ and $p\sim q$ (in $R_\pi$), then $\pi\upharpoonright \ran(p)$ and $\pi\upharpoonright \ran(q)$ are isomorphic. Thus $R_\pi$ gives us useful information about the invariant subspaces of $\pi$. For instance, the reader may easily verify that $\pi$ is irreducible (i.e., has no non-trivial invariant subspaces) iff $R_\pi=\C I$. More generally, if $p\in P(R_\pi)$ is a minimal non-zero projection (i.e., having no proper non-zero subprojections in $P(R_\pi)$), then $\pi\upharpoonright \ran(p)$ is irreducible.

It is tempting to hope that any unitary representation can be written as a direct sum, or direct integral, of irreducible unitary representations. It can, but the decomposition is very badly behaved (and non-unique) unless $\Gamma$ is abelian by finite. A better behaved decomposition theory is based around the following notion:

\begin{definition}
A unitary representation $\pi $ of $\Gamma $ is called a {\bf factor representation} (or sometimes a {\bf primary} representation) if $R_{\pi}$ is a factor. A representation $\pi$ is said to be type $\I_n$, $\I_\infty$, $\II_1$, $\II_\infty$ or $\III$ according to what type $R_\pi$ is. Similarly, it is called finite, infinite, semifinite or purely infinite according to what $R_\pi$ is.
\end{definition}

The decomposition theory for unitary representation can now be obtained from the decomposition of von Neumann algebras into factors.

\begin{theorem}[Mackey]\label{t.mackey}
Let $\pi$ be a unitary representation of $\Gamma$ on a separable Hilbert space. Then there is a standard $\sigma $-finite measure space $(X,\mu)$, a Borel field of Hilbert spaces $(H_x)_{x\in X}$, and a Borel field $(\pi _x)_{x\in X}$ of factor representations of $\Gamma$ such that
\begin{equation*}
\pi \simeq \int_{X}\pi _{x}d\mu \left( x\right) 
\end{equation*}
and if $x\neq y$, then $\pi_x\perp\pi_y$. Furthermore, this decomposition of $\pi$ is essentially unique.
\end{theorem}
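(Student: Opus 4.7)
The plan is to reduce this to the direct integral decomposition of von Neumann algebras by applying that theorem to a suitable von Neumann algebra attached to $\pi$. The natural candidate is $M := \pi(\Gamma)''$, the von Neumann algebra generated by the image of $\pi$. Then $M' = R_\pi$, and $M$ and $R_\pi$ share a common center $\mathcal{Z}(M)=\mathcal Z(R_\pi)=M\cap R_\pi$. Note that $\pi$ is a factor representation precisely when $M$ is a factor, since $R_\pi = M'$ is a factor iff $M$ is.

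Apply the direct integral decomposition theorem to $M$ to obtain a standard $\sigma$-finite $(X,\mu)$, a Borel field $(H_x)_{x\in X}$ with $H\cong \int_X H_x\,d\mu(x)$, and a Borel field $(M_x)_{x\in X}$ of factors with $M=\int_X M_x\,d\mu(x)$ and $\mathcal{Z}(M)=L^\infty(X,\mu)$. Each $\pi_\gamma\in M$ is decomposable, so write $\pi_\gamma = \int_X (\pi_\gamma)_x\,d\mu(x)$ with $(\pi_\gamma)_x\in M_x$. Since $\Gamma$ is countable, by discarding a null set I can arrange that for every $x$ the map $\gamma\mapsto (\pi_\gamma)_x$ is a genuine unitary representation $\pi_x$ of $\Gamma$ on $H_x$ (decomposable operators multiply fiberwise, and $(\pi_\gamma)_x^{\ast}=(\pi_{\gamma^{-1}})_x$ a.e.), and so that $x\mapsto \pi_x$ is Borel in the natural sense. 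The key point is that the family $\{\pi_\gamma:\gamma\in\Gamma\}$ generates $M$ as a von Neumann algebra; a standard property of the direct integral decomposition (the ``fiberwise generation'' lemma, essentially bicommutant applied fiber-by-fiber) then gives $\pi_x(\Gamma)''=M_x$ for a.e.\ $x$. Since $M_x$ is a factor, $R_{\pi_x}=\pi_x(\Gamma)'=(M_x)'$ is again a factor, so $\pi_x$ is a factor representation.

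For the disjointness clause, note that any intertwiner $T$ among the $\pi_x$'s that could be pieced together would give an operator in $R_\pi=M'=\int_X (M_x)'\,d\mu(x)$, i.e.\ a decomposable operator with $T_x\in (M_x)'$. Thus no ``off-diagonal'' intertwiner exists: the only intertwiners live fiberwise, which is precisely what it means (on the level of a continuous measure) that $\pi_x\perp\pi_y$ for $\mu\otimes\mu$-a.e.\ $(x,y)$ with $x\neq y$, and literally for $x\neq y$ in case $\mu$ is atomic. The essential uniqueness of the decomposition descends from the essential uniqueness of the decomposition of $M$, which in turn is encoded by the essential uniqueness of the identification $\mathcal Z(M)\cong L^\infty(X,\mu)$ as an abelian von Neumann algebra.

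I expect the main technical obstacle to be the measurability bookkeeping: showing that the representatives $(\pi_\gamma)_x$ can be selected so that $(\gamma,x)\mapsto (\pi_\gamma)_x$ is jointly Borel and $\pi_x$ is a representation for \emph{every} $x$ in a conull Borel set, and the ``fiberwise generation'' step $\pi_x(\Gamma)''=M_x$ a.e.\ — both are routine given the standard machinery of Borel fields and decomposable operators as in \cite{oanielsen}, but they are where all the real work sits. Once these are in hand, the factor property of $R_{\pi_x}$, the disjointness, and uniqueness all follow cleanly from the corresponding properties of the decomposition of $M$.
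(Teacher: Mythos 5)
Your overall route---decompose $M=\pi(\Gamma)''$ over its center via the direct integral theorem, push the decomposition down to the generators $\pi_\gamma$, and get factoriality of the fibers from $\pi_x(\Gamma)''=M_x$ a.e.---is exactly the reduction the paper intends: the paper gives no proof at all beyond the remark that the theorem ``can now be obtained from the decomposition of von Neumann algebras into factors,'' and up to the measurability bookkeeping you correctly flag (plus the commutant identity $R_\pi=\int_X (M_x)'\,d\mu(x)$, which is itself a standard but nontrivial lemma you invoke silently), that part of your proposal is sound.

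The genuine gap is the disjointness clause, and it is not mere bookkeeping. Your argument conflates two different things: an intertwiner $T\colon H_x\to H_y$ between $\pi_x$ and $\pi_y$ for one fixed pair $x\neq y$ is not an operator on $H\cong\int_X H_x\,d\mu(x)$ when $\mu$ is continuous (the two fibers form a null set), so the fact that every element of $R_\pi$ is decomposable says nothing directly about such a $T$. You implicitly concede this by retreating to disjointness for $\mu\otimes\mu$-a.e.\ pair, but that is strictly weaker than the theorem, which asserts $\pi_x\perp\pi_y$ for \emph{all} $x\neq y$ (after deleting a null set, which is what ``essentially'' licenses). Moreover, even the a.e.\ version does not follow formally from decomposability of $R_\pi$: one needs a measurable-selection argument---if the set of non-disjoint off-diagonal pairs had positive measure, select a Borel field of nonzero intertwiners over a positive-measure rectangle $A\times B$ with $A\cap B=\emptyset$, normalize, and integrate to produce a nonzero element of $p_B R_\pi p_A$, a contradiction with decomposability. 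Upgrading to pointwise disjointness requires, in addition, the dichotomy that two factor representations are either disjoint or quasi-equivalent, together with an argument that $x\mapsto$ the quasi-equivalence class of $\pi_x$ is injective off a null set; this is the actual content of Mackey's theorem (see Dixmier's book, 8.4.2) and is where the work lies. Relatedly, your uniqueness sketch needs the disjointness hypothesis: it is precisely pairwise disjointness that forces any such decomposition to diagonalize $\mathcal{Z}(R_\pi)$, which is the bridge back to the uniqueness clause of the von Neumann algebra decomposition theorem.
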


\begin{exercise}
Show that a type $\I_n$ factor representation ($n\in\N\cup\{\infty\}$) is a direct sum of $n$ irreducible representations.
\end{exercise}

For type $\II$ and $\III$ factor representations, nothing like this is true, since there are no minimal non-zero projections. In the case of type $\III$ factors, the restriction of $\pi$ to any invariant non-zero closed subspace (i.e., what may be called a ``piece'' of the representation) is isomorphic to the whole representation. In the case when $\pi$ is type $\II$, every piece of $\pi$ may be subdivided into $n$ smaller pieces that are all isomorphic to each other, but unlike the type $\III$ case, the trace (and the associated notion of dimension) gives us a sense of the size of the pieces of $\pi$. (So, in the type $\III$ everyone can get as much $\pi$ as they want, in the type $\II$ case we can divide the $\pi$ into however many albeit small pieces we like, and in the type $\I$ case there is a minimal size of the pieces of $\pi$ (and maximal number of pieces, too).)

We close the section with mentioning the following recent theorem. It solves an old problem of Effros, who asked if the conjugacy relation $\simeq$ for unitary representation of a fixed countable discrete group $\Gamma$ is Borel (in the space $\rep(\Gamma,H)=\{\pi\in U(H)^\Gamma: \pi\text{ is a homomorphism}\}$). 

\begin{theorem}[Hjorth-T\"ornquist, 2011]
Let $H$ be an infinite dimensional separable Hilbert space and let $\Gamma$ be a countable discrete group. The conjugacy relation in $\rep(\Gamma,H)$ is $F_{\sigma\delta}$.
\end{theorem}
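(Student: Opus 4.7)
The plan is to exhibit the conjugacy relation as a countable intersection of $F_\sigma$ sets. The space $\rep(\Gamma, H)$ is Polish as a closed subspace of $U(H)^\Gamma$ endowed with the product strong-operator topology, and conjugacy is a priori $\Sigma^1_1$ as the orbit equivalence relation of the continuous action $U(H) \actson \rep(\Gamma, H)$ by conjugation. My strategy is to replace the single existential over the (non-$\sigma$-compact) Polish group $U(H)$ by countably many \emph{local} approximate-intertwining conditions, each witnessed by elements of a countable dense subset of $B^1(H)$.

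Concretely, I would fix a countable dense sequence $(\xi_n)_{n\in\N}$ in $H$ and a countable dense set $\mathcal V \subseteq B^1(H)$ in the strong operator topology (say rational partial isometries of finite rank supported on $\text{span}\{\xi_n\}$). For each finite $F \subseteq \Gamma$, each finite tuple $\bar n = (n_1, \ldots, n_k) \in \N^k$, and each rational $\epsilon > 0$, define
\begin{equation*}
C_{F, \bar n, \epsilon}(\pi, \sigma) \colon\ \exists v \in \mathcal V\ \text{with}\ \max_{\gamma \in F,\, i \leq k} \|v\pi_\gamma \xi_{n_i} - \sigma_\gamma v\xi_{n_i}\| < \epsilon\ \text{and}\ \max_{i, j \leq k} \bigl|\langle v\xi_{n_i}, v\xi_{n_j}\rangle - \langle \xi_{n_i}, \xi_{n_j}\rangle\bigr| < \epsilon.
\end{equation*}
Each $C_{F, \bar n, \epsilon}$ is $F_\sigma$ in $(\pi, \sigma)$, being a countable union over $v \in \mathcal V$ of jointly open conditions. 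I then propose the characterization
\begin{equation*}
\pi \simeq \sigma \iff \bigwedge_{F, \bar n, \epsilon} \bigl( C_{F, \bar n, \epsilon}(\pi, \sigma) \wedge C_{F, \bar n, \epsilon}(\sigma, \pi) \bigr),
\end{equation*}
which is $F_{\sigma \delta}$ as a countable conjunction of $F_\sigma$'s. The forward implication is immediate: a conjugating unitary $U$ is strongly approximated on the finite set $\{\xi_{n_i}\} \cup \{\pi_\gamma \xi_{n_i}\}$ by some $v \in \mathcal V$, and unitarity of $U$ guarantees the inner-product condition.

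The main obstacle is the converse. Given the full family of almost-isometric approximate intertwiners, I would pick a diagonal sequence $v_k \in \mathcal V$ witnessing cofinal data with $\epsilon_k \to 0$ and, by weak compactness of $B^1(H)$, pass to a limit $T \in B^1(H)$. The approximate intertwining passes to the limit, so $T \pi_\gamma = \sigma_\gamma T$ for all $\gamma$, i.e.\ $T \in R_{\pi, \sigma}$; the polar decomposition exercise stated earlier in the paper places the partial-isometry part $u$ of $T$ in $R_{\pi, \sigma}$ as well. Running the symmetric argument yields $u' \in R_{\sigma, \pi}$, and then Schr\"oder--Bernstein for projections (Proposition \ref{p.sb}) applied inside $R_{\pi \oplus \sigma}$ to the corner projections and the subprojections $u^*u$, $u'(u')^*$ assembles a unitary $W \in R_{\pi, \sigma}$, i.e.\ a conjugacy. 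The genuine difficulty, and the reason the theorem is nontrivial, is that a weak limit of partial isometries can fail to be isometric (e.g.\ rank-one unit-vector images can go weakly to zero), so the diagonalization must be refined to produce $u, u'$ that are honest isometries. This is where the structure theory developed earlier enters: Mackey's direct integral decomposition (Theorem \ref{t.mackey}) reduces the matching problem to its factor components, and the type classification together with uniqueness of trace on finite factors and the absorption phenomenon for type $\III$ factors guarantees that the approximate inner-product data on the dense set $(\xi_n)$ pins down the isomorphism type of each factor piece, yielding a genuine unitary intertwiner.
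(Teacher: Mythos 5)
Your descriptive-set-theoretic bookkeeping is fine and the forward implication is correct, but the proposed biconditional is false: the conjunction of the conditions $C_{F,\bar n,\epsilon}$ in both directions defines a relation strictly coarser than conjugacy, because finite, approximate, almost-isometric intertwining data cannot detect multiplicity. Concretely, take $\Gamma=\Z$, let $\pi$ be multiplication by $z$ on $L^2(\T)$ and let $\sigma=\pi\oplus\pi$. These are not conjugate, since unitary equivalence of unitary operators remembers spectral multiplicity ($1$ versus $2$). Yet every one of your conditions holds in both directions: $C_{F,\bar n,\epsilon}(\pi,\sigma)$ holds via the honest isometric embedding of $\pi$ as the first summand of $\sigma$; for $C_{F,\bar n,\epsilon}(\sigma,\pi)$, approximate the finitely many relevant vectors by pairs of trigonometric polynomials with Fourier support in $[-D,D]$, choose $D'\geq D+\max_{k\in F}|k|$ and $N>2D'$, and let $v$ be the finite-rank partial isometry determined by $(z^j,0)\mapsto z^j$ and $(0,z^j)\mapsto z^{j+N}$ for $|j|\leq D'$, extended by zero. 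Since the images form an orthonormal set, $v$ preserves the relevant Gram matrix exactly and intertwines the two representations exactly on the relevant finite-dimensional subspace, so (after an SOT-approximation by an element of $\mathcal V$) it witnesses the condition. This is the Weyl--von Neumann phenomenon: your conditions define a local form of approximate containment, which is insensitive to multiplicity. Consequently the ``main obstacle'' you flag at the end is not a refinable technicality of the diagonalization: in the example no isometric intertwiner from $H_\sigma$ to $H_\pi$ exists at all, so no weak-limit or Schr\"oder--Bernstein argument can manufacture one from your data.

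The paper's proof avoids this by making the intertwining \emph{exact} and only the \emph{size} approximate, rather than the other way around. Its key statement $S_0(\pi,\sigma)$ asserts that there are projections $p\in P(R_\pi)$ weakly arbitrarily close to $I$ such that $\pi\upharpoonright\ran(p)$ is isomorphic to a subrepresentation of $\sigma$; that is, one quantifies over genuine partial-isometry intertwiners with support projection $p$, and only the proximity of $p$ to $I$ is approximate --- this is what makes the statement $F_{\sigma\delta}$ (the existential over exact intertwiners is tamed using weak compactness of $B^1(H)$, not a countable dense set of approximate witnesses). In the counterexample above, $S_0(\sigma,\pi)$ fails, since no co-small piece of the multiplicity-two representation embeds into the multiplicity-one representation --- exactly the information your conditions discard. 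One then proves that for \emph{factor} representations $S_0(\pi,\sigma)\wedge S_0(\sigma,\pi)$ implies $\pi\simeq\sigma$, trivially in the purely infinite case and via the trace in the finite and semifinite cases, and concludes using the uniqueness and disjointness in Mackey's decomposition (Theorem \ref{t.mackey}). Your instinct to invoke Schr\"oder--Bernstein for projections and the type classification matches that last step, but the argument must be fed exact partial intertwiners whose supports are quantifiably large; repairing your proof amounts to replacing the conditions $C_{F,\bar n,\epsilon}$ by this statement, which is the actual content of \cite{hjto12}.
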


The proof uses only classical results that have been introduced already above. To give a rough sketch of what happens in the proof, fix $\pi, \sigma\in\rep(\Gamma,H)$. The idea is that it is an $F_{\sigma\delta}$ statement to say that there are pieces (i.e., projections $p\in P(R_{\pi})$) arbitrarily close to $I$ (in the weak topology, say) such that $\pi\upharpoonright \ran(p)$ is isomorphic to a sub-representation of $\sigma$. Call this statement $S_0(\pi,\sigma)$. One first proves that for factor representations $\pi$ and $\sigma$, it holds that $\pi\simeq\sigma$ iff $S_0(\pi,\sigma)$ and $S_0(\sigma,\pi)$. This is trivial in the purely infinite (type $\III$) case, while in finite and semifinite case the trace gives us a way of proving that if pieces of $\pi$ closer and closer to $I$ can fit into $\sigma$, $\pi$ itself can fit into $\sigma$ (and conversely). It then follows that $\sigma\simeq\pi$. The proof is finished by using Theorem \ref{t.mackey}.

The details can be found in \cite{hjto12}. The theorem applies more generally to representations of locally compact second countable groups and separable C*-algebras.

\section*{Lecture 3}

\addtocounter{section}{1}
\setcounter{theorem}{0}

\begin{center}

{\it The group-measure space construction, the von Neumann algebra of a non-singular countable Borel equivalence relation, orbit equivalence, von Neumann equivalence, and Cartan subalgebras.}

\end{center}

\subsection{The group-measure space construction} Let $(X,\mu)$ be a standard $\sigma$-finite measure space, and let $\Gamma$ be a countable discrete group. A Borel action $\sigma:\Gamma\actson X$ is said to be {\bf non-singular} (w.r.t. $\mu$) if for all $\gamma\in\Gamma$ we have that $\sigma_\gamma\mu\approx\mu$ (i.e., $\mu(A)=0$ iff $\mu(\sigma_\gamma^{-1}(A))=0$), and that $\sigma$ is {\bf measure preserving} (w.r.t. $\mu$) if $\sigma_\gamma\mu=\mu$ for all $\gamma\in\Gamma$ (i.e., $\mu(A)=\mu(\sigma_\gamma^{-1}(A))$ for all measurable $A\subseteq X$). The action is {\bf ergodic} if any invariant measurable set is either null or conull; it is {\bf a.e.\! free} (or \emph{essentially} free) if for any $\gamma\in\Gamma\setminus\{1\}$ we have that
$$
\mu(\{x\in X: \sigma_\gamma(x)=x\})=0.
$$
The action $\sigma$ induces an {\bf orbit equivalence relation}, denoted $E_\sigma$, on the space $X$, which is defined by
$$
xE_\sigma y\iff (\exists\gamma\in\Gamma) \sigma_\gamma(x)=y.
$$
Note that $E_\sigma$ is a Borel subset of $X\times X$.

To a non-singular action $\sigma:\Gamma\actson (X,\mu)$ as above there is an associated von Neumann algebra. The description is slightly easier in the case when $\sigma$ is measure preserving, so we will assume that this is the case. In this case the construction is also closely parallel to the construction of the group von Neumann algebra.

Let $\nu$ be the counting measure on $\Gamma$. Give $\Gamma\times X$ the product measure $\nu\times\mu$, and define for each $\gamma\in\Gamma$ a unitary operator $U_\gamma$ on $L^2(\Gamma\times X,\nu \times \mu)$ by
\begin{equation*}
\left( U_{\gamma }\psi \right) \left( \delta ,x\right) =\psi \left( \gamma
^{-1}\delta ,\sigma _{\gamma^{-1}}(x)\right).
\end{equation*}
Further, define for each $f\in L^\infty(X,\mu)$ an operator $m_f\in B(L^2(\Gamma\times X))$ by 
\begin{equation*}
\left( m_{f}\psi \right) \left( \delta ,x\right) =f\left( x\right) \psi
\left( \delta ,x\right) \text{.}
\end{equation*}
Define
\begin{equation*}
L^\infty(X,\mu)\rtimes_\sigma\Gamma:=\left(\{m_{f} : f\in L^{\infty }(X,\mu)\}
\cup \{U_{\gamma} : \gamma \in \Gamma \}\right)^{\prime\prime}.
\end{equation*}
The von Neumann algebra $L^\infty(X,\mu)\rtimes_\sigma\Gamma$ is called the {\bf group-measure space} von Neumann algebra of the action $\sigma$. It is an example of W*-crossed product. It is worth noting that if $X$ consists of a single point then we get the group von Neumann algebra of $\Gamma$.

The group $\Gamma$ acts on $L^\infty(X,\mu)$ by $(\hat\sigma_\gamma(f))(x)=f(\sigma_{\gamma^{-1}}(x))$; note that $\hat\sigma$ is a $*$-automorphism of $L^\infty(X,\mu)$ for every $\gamma \in \Gamma $. An easy calculation shows that $U_\gamma m_f=m_{\hat\sigma_\gamma(f)}U_\gamma$, and so $U_\gamma m_f U_{\gamma}^*=m_{\hat\sigma_\gamma(f)}$. It follows that
$$
\mathcal A=\{\sum_{\gamma \in F}m_{f_{\gamma }}U_{\gamma }:F\subseteq\Gamma \text{ is finite}\wedge f_\gamma\in L^\infty(X,\mu)\}
$$
is a $*$-algebra, which is the smallest $*$-algebra containing the operators $U_\gamma$ and $m_f$, and so $\mathcal A$ is dense in $L^\infty(X,\mu)\rtimes_\sigma\Gamma$ by the double commutant theorem.

Assume now that $\mu$ is a \emph{finite} measure. Then
\begin{equation*}
e_{1}\left( \gamma ,x\right) =
\begin{cases}
1 & \text{if }\gamma =1\text{,} \\ 
0 & \text{otherwise,}
\end{cases}
\end{equation*}
defines an element of $L^{2}(\Gamma \times X)$, and we can define a state (positive linear functional) on $B(L^2(\Gamma\times X))$ by $\tau(T)=\langle Te_1,e_1\rangle$. If $x=\sum_{\gamma \in F }m_{f_{\gamma }}U_{\gamma
}\in \mathcal A$, then we have
\begin{equation*}
\tau(x)=\left\langle \left( \sum_{\gamma \in F }m_{f_{\gamma }}U_{\gamma
}\right) e_{1},e_{1}\right\rangle =\int_{X}f_{1}d\mu,
\end{equation*}
and one may easily verify that for $x,y\in\mathcal A$ we have $\tau(xy)=\tau(yx)$. Using that multiplication is separately weakly continuous in $B(L^2(\Gamma,X))$ we see that the trace property extends to the weak closure of $\mathcal A$. Thus $\tau$ is a weakly continuous trace on $L^\infty(X,\mu)\rtimes_\sigma\Gamma$ with $\tau(I)$ finite, from which it follows that $L^\infty(X,\mu)\rtimes_\sigma \Gamma$ is a finite von Neumann algebra. Taking $f=\chi_A$ to be the characteristic function of some measurable $A\subseteq X$, we see that $\tau(m_f)=\mu(A)$. So if $\mu$ is non-atomic we have $\tau(P(L^\infty(X,\mu)\rtimes_\sigma\Gamma))=[0,\tau(I)]$.

One may ask if $L^\infty(X,\mu)\rtimes_\sigma\Gamma$ can be a factor. The above analysis shows that \emph{if} it is a factor \emph{and} $\mu$ is finite and non-atomic, then $L^\infty(X,\mu)\rtimes_\sigma\Gamma$ must be a type $\II_1$ factor. Though far from providing an exhaustive answer to this question, the following is a key result in this direction:

\begin{theorem}
If $\sigma$ is a.e.\ free and ergodic, then $L^\infty(X,\mu)\rtimes_\sigma\Gamma$ is a factor.
\end{theorem}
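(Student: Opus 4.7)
My plan is to show that the center $\mathcal Z(M) = M \cap M'$ of $M := L^\infty(X,\mu)\rtimes_\sigma\Gamma$ reduces to $\C I$, whence $M$ is a factor. The strategy is to associate to each $T \in M$ a ``generalized Fourier series'' $\sum_{\gamma\in\Gamma} m_{f_\gamma}U_\gamma$ with coefficients $f_\gamma \in L^\infty(X,\mu)$, and then translate the two central conditions $[T,m_g]=0$ and $[T,U_\gamma]=0$ into pointwise identities that are killed by essential freeness and ergodicity respectively.

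Granting the Fourier representation, the argument runs as follows. Suppose $T \in \mathcal Z(M)$ has coefficients $(f_\gamma)_{\gamma\in\Gamma}$. Using the commutation relation $U_\gamma m_g = m_{\hat\sigma_\gamma(g)} U_\gamma$ already recorded in the text, expanding $m_g T$ and $T m_g$ and matching coefficients yields
\[
g f_\gamma = f_\gamma\,\hat\sigma_\gamma(g), \quad \text{i.e.,} \quad f_\gamma(x)\bigl(g(x) - g(\sigma_{\gamma^{-1}}(x))\bigr) = 0 \text{ a.e.,}
\]
for every $g \in L^\infty(X,\mu)$ and every $\gamma \in \Gamma$. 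For $\gamma \neq 1$, essential freeness gives $\mu(\{x : \sigma_{\gamma^{-1}}(x)=x\}) = 0$; since $X$ is standard Borel, I can fix a countable separating family of Borel sets $\{B_n\}$ and plug in $g = \chi_{B_n}$ to conclude that $f_\gamma$ vanishes on $B_n \triangle \sigma_\gamma(B_n)$ for every $n$, whose union contains the whole set $\{x : x \neq \sigma_{\gamma^{-1}}(x)\}$ up to a null set. Hence $f_\gamma = 0$ a.e.\ for $\gamma \neq 1$ and $T = m_{f_1}$. The remaining condition $TU_\gamma = U_\gamma T$ then becomes $m_{f_1} = U_\gamma m_{f_1} U_\gamma^* = m_{\hat\sigma_\gamma(f_1)}$ for every $\gamma$, so $f_1$ is $\sigma$-invariant, and ergodicity forces $f_1$ to be a.e.\ constant, yielding $T \in \C I$.

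The main obstacle is making the Fourier series representation rigorous, since an arbitrary $T \in M$ is only known to be a weak limit of elements of the $*$-algebra $\mathcal A$ of finite sums $\sum_{\gamma\in F}m_{f_\gamma}U_\gamma$. Two routes suggest themselves. When $\mu$ is finite, the trace $\tau$ constructed above endows $M$ with a $\|\cdot\|_2$-norm $\|x\|_2 = \tau(x^*x)^{1/2}$; the vectors $(U_\gamma e_1)_{\gamma\in\Gamma}$ then form an orthonormal family in the corresponding completion, the map $T \mapsto (f_\gamma)$ with $f_\gamma \in L^2(X,\mu)$ becomes the expansion of $Te_1$ in this family, and uniqueness of coefficients together with the coefficient-matching step above become routine $L^2$ calculations (plus a bootstrapping step showing that when $T \in M$ the $f_\gamma$ are automatically in $L^\infty$, not merely $L^2$). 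Alternatively, and more uniformly in the $\sigma$-finite case, I would decompose $L^2(\Gamma\times X) \cong \bigoplus_{\gamma\in\Gamma} L^2(X,\mu)$ and write $T$ as a $\Gamma\times\Gamma$ matrix of operators on $L^2(X,\mu)$; commutation with every $m_g$ forces each matrix entry to be a multiplication operator (using that $L^\infty(X,\mu)$ is a MASA in $B(L^2(X,\mu))$ per Exercise~\ref{e.masa}), and covariance with the $U_\gamma$ then organizes these entries into a single $f_\gamma$ per $\Gamma$-diagonal, after which the same endgame applies.
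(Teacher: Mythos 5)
Your overall strategy is the classical Murray--von Neumann argument, carried out directly on the crossed product, and it is a genuinely different route from the paper's. The paper never proves this theorem on the crossed product itself: it obtains it from the Feldman--Moore machinery of the next subsection, namely the standard-form lemma (every $x\in M(E)$ is uniquely $\sum_n f_n L_{a_{\phi_n}}$), the theorem that $L^\infty(X,\mu)$ is a Cartan subalgebra of $M(E)$ --- whose computation $f_n(g-g\circ\phi_n)=0$ is exactly your identity $f_\gamma\bigl(g-\hat\sigma_\gamma(g)\bigr)=0$ in different clothing --- the theorem that $M(E)$ is a factor iff $E$ is ergodic, and finally the identification $L^\infty(X,\mu)\rtimes_\sigma\Gamma\simeq M(E_\sigma)$ of exercise \ref{e.gmsp}, which is where freeness is consumed (it makes $(\gamma,x)\mapsto(x,\sigma_\gamma(x))$ essentially bijective, so the graphs of the $\phi_n$ with $n\neq 0$ are disjoint from the diagonal). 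Your direct route avoids constructing $M(E)$ altogether and in fact supplies the analytic content (existence and uniqueness of the coefficient expansion) that the paper's skipped Lemma sweeps under the rug; the paper's route buys more in return: the Cartan structure, the ``factor iff ergodic'' equivalence, and orbit-equivalence invariance. Your route 1 (finite $\mu$, trace) is essentially correct; the ``routine $L^2$ calculations'' hide one genuine trick worth making explicit: to compute $Tm_ge_1$ you should write $m_ge_1=N_ge_1$ where $(N_g\psi)(\delta,x)=\hat\sigma_\delta(g)(x)\psi(\delta,x)$, check $N_g\in M'$ (similarly $U_\gamma e_1=\rho_{\gamma^{-1}}e_1$ for the right translations $(\rho_\alpha\psi)(\delta,x)=\psi(\delta\alpha,x)\in M'$), and use that $e_1$ is separating for $M$ because it is cyclic for $M'$.

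Your route 2, however, has a real gap as written: every constraint you invoke --- commutation with all $m_g$ and covariance with all $U_\gamma$ --- uses only $T\in\{m_g,U_\gamma\}'=M'$, never $T\in M$. Those constraints do give multiplication entries $T_{\delta,\gamma}=m_{h_{\delta,\gamma}}$ organized per diagonal, but they are satisfied by \emph{every} element of the commutant: the operator $\rho_\alpha$ above lies in $M'$, has this form with $f_{\alpha^{-1}}=1$, and visibly violates your key identity $gf_\beta=f_\beta\hat\sigma_\beta(g)$. Indeed that identity cannot follow from your derived structure, since commutation with the diagonal operators $m_g$ is automatic once the entries are multiplications; if your endgame applied, you would have proved $M'=\C I$, i.e., $M=B(L^2(\Gamma\times X))$, which is absurd. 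The repair is to use $T\in M$: either (i) note that the entry maps $T\mapsto T_{\delta,\gamma}$ are weakly continuous and that elements of the dense $*$-algebra $\mathcal A$ have $(\delta,\gamma)$-entry in the weakly closed subspace $L^\infty(X,\mu)u_{\delta\gamma^{-1}}$ (where $u_\beta\phi=\phi\circ\sigma_{\beta^{-1}}$), so entries of $T\in M$ have the form $m_fu_\beta$, and commuting $m_fu_\beta$ with every $m_g$ yields precisely $gf=f\hat\sigma_\beta(g)$; or (ii) impose commutation with the explicit commutant operators $\rho_\alpha$ and $N_g$, the latter giving $h_{\delta,\gamma}\bigl(\hat\sigma_\gamma(g)-\hat\sigma_\delta(g)\bigr)=0$, which at $\gamma=1$ is the freeness identity. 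With this fix, route 2 does deliver the $\sigma$-finite case uniformly, as you intended.
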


The reader may find it amusing to verify that if we take $\sigma$ to be the action of $\Z/n\Z$ on itself by translation (which preserves the counting measure, and is free and ergodic), then the corresponding group-measure space factor is just $M_n(\C)$.

A question of central importance is to understand the relationship between the action $\sigma$ and $L^\infty(X,\mu)\rtimes_\sigma\Gamma$. Which properties of the action, if any, are reflected in the group-measure space von Neumann algebra? This is far from completely understood, and a vast body of literature addressing various special cases exists. (In general, the more ``rigid'' a group is, the more information about the action and the group will be encoded into the group-measure space algebra.)

\begin{definition}
Let $\sigma:\Gamma\actson (X,\mu)$ and $\pi:\Gamma\actson (Y,\nu)$ be measure preserving actions on standard $\sigma$-finite measure spaces. We say that $\sigma$ and $\pi$ are
\begin{enumerate}
\item {\bf conjugate} if there is a non-singular Borel bijection $T:X\to Y$ such that for all $\gamma\in\Gamma$ we have $T\sigma_\gamma(x)=\pi_\gamma T(x)$ for almost all $x\in X$.
\item {\bf orbit equivalent} if there is a non-singular Borel bijection $T:X\to Y$ such that for almost all $x,y\in X$ we have
$$
xE_\sigma y\iff T(x) E_\pi T(y).
$$
\item {\bf von Neumann equivalent} (or {\bf W*-equivalent}) if $L^\infty(X,\mu)\rtimes_\sigma\Gamma$ and $L^\infty(Y,\nu)\rtimes_\pi\Gamma$ are isomorphic.
\end{enumerate}
\end{definition}
It is clear that conjugacy implies von Neumann equivalence and orbit equivalence, but little else can be said immediately. We will see below (exercise \ref{e.gmsp}) that when the actions $\sigma$ and $\pi$ are free, then orbit equivalence implies von Neumann equivalence. 

Let $M=L^\infty(X,\mu)\rtimes_\sigma\Gamma$. A special role is played by the Abelian subalgebra generated by the operators $m_f$ (which we identify with $L^\infty(X,\mu)$ in the obvious way). It can be seen that $\sigma$ is a.e.\ free iff $L^\infty(X,\mu)$ is a maximal Abelian subalgebra. The subalgebra $L^\infty(X,\mu)$ has the property that the unitary normalizer
$$
\{U\in U(M) : U L^{\infty }(X,\mu)U^{\ast }\subseteq L^{\infty
}(X,\mu)\}
$$
generates $M$; we say that $L^\infty(X,\mu)$ is a {\bf regular} subalgebra of $M$. So when $\sigma$ is a.e.\ free, then $L^\infty(X,\mu)$ is a {\bf Cartan subalgebra} of $M$, i.e., a maximal Abelian regular subalgebra (see exercise \ref{e.gmsp} below).

Finally, we define the crossed product when $\sigma$ is non-singular, but not measure preserving. In this case the outcome will be a purely infinite von Neumann algebra. It is still the case that $\hat\sigma$ acts on $L^\infty(X,\mu)$ by $*$-automorphisms, and $L^\infty(X,\mu)$ is represented as multiplication operators on $H=L^2(X,\mu)$. Consider then the Hilbert space $L^2(\Gamma,\nu, H)$ of $L^2$ functions with values in $H$. On this Hilbert space we define the operators
$$
\left((\lambda_\gamma\psi)(\delta)\right)(x)=\psi(\gamma^{-1}\delta)(x)
$$
and
$$
(\pi_f(\psi)(\gamma))(x)=(\hat\sigma_{\gamma}f)(x)(\psi(\gamma))(x).
$$
We then let $L^\infty(X,\mu)\rtimes_\sigma\Gamma$ be the von Neumann algebra generated by this family of operators. This definition also hints at how one may go about defining the crossed product even more generally: Instead of $L^\infty(X,\mu)$, consider an arbitrary von Neumann algebra $N$ acting on a Hilbert space $H$ and let $\hat\sigma:\Gamma\actson N$ be an action on $N$ by $*$-automorphisms. The formulas above still define bounded operators on $L^2(\Gamma,\nu,H)$, and they generate a von Neumann algebra that we denote by $N\rtimes_{\hat\sigma}\Gamma$.

\subsection{The von Neumann algebra of a non-singular countable Borel equivalence relation.}

In two highly influential papers \cite{fm1,fm2}, Feldman and Moore developed a string of results related to countable Borel equivalence relations and von Neumann algebras. In the first paper they study countable non-singular Borel equivalence relations and their cohomology; in the second paper they construct a von Neumann algebra $M(E)$ directly from a non-singular countable Borel equivalence relation $E$, and study its properties. 

The present section is dedicated to the construction of $M(E)$. It can best be described as the construction of a ``matrix algebra'' over the equivalence relation. The details that are not provided below can for the most part be found in \cite{fm2}.\footnote{Both \cite{fm1} and \cite{fm2} are extremely well written and are warmly recommended.}

\medskip

Before we can define $M(E)$, we need a few facts from the first paper \cite{fm1}. The first of these is by now widely known:

\begin{theorem}[\cite{fm1}]\label{t.fm1}
If $E$ is a countable Borel equivalence relation on a standard Borel space $X$, then there is a countable group $\Gamma$ of Borel automorphisms of $X$ which induce $E$. (That is, $E=E_\sigma$ for some Borel action $\sigma:\Gamma\actson X$.)
\end{theorem}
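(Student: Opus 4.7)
My plan is to apply the Lusin--Novikov uniformization theorem to write $E$ as a countable union of graphs of Borel partial injections, extend each to a Borel involution of $X$, and then take $\Gamma$ to be the countable group these involutions generate inside the group of Borel automorphisms of $X$. First I would invoke Lusin--Novikov: since $E \subseteq X \times X$ is Borel with countable vertical sections, there exist Borel partial functions $f_n : D_n \to X$ with Borel domains $D_n \subseteq X$ such that $E = \bigcup_{n \in \N} \graph(f_n)$. Because $E$ is symmetric its horizontal sections are also countable, so re-applying Lusin--Novikov to each $\graph(f_n)$ with the roles of the coordinates swapped lets me refine each $f_n$ into countably many Borel partial \emph{injections} whose ranges $R_n = f_n(D_n)$ are automatically Borel.

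The next step is to reduce to the case $D_n \cap R_n = \emptyset$. For fixed $n$ I consider the undirected Borel graph $G_n$ on $D_n \cup R_n$ whose edges are the pairs $\{x, f_n(x)\}$ for $x \in D_n$ with $f_n(x) \neq x$. Since $f_n$ is injective, every vertex has degree at most $2$, so the components of $G_n$ are Borel paths and cycles, and a standard argument yields a Borel proper $3$-coloring. This partitions $D_n$ into three Borel pieces $D_n^{(k)}$ satisfying $D_n^{(k)} \cap f_n(D_n^{(k)}) = \emptyset$. Replacing the original family with the countable collection $\{f_n \restrict D_n^{(k)}\}$ (and adjoining the identity on $X$ to cover the diagonal of $E$) I may assume each $f_n$ is a Borel partial injection with disjoint domain and range.

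Finally, for each such $f_n$ I define the Borel involution
\[
\phi_n(x) = \begin{cases} f_n(x) & \text{if } x \in D_n, \\ f_n^{-1}(x) & \text{if } x \in R_n, \\ x & \text{otherwise,} \end{cases}
\]
which is a genuine Borel automorphism of $X$ because its three pieces live on disjoint Borel sets. Let $\Gamma$ be the countable group generated by $\{\phi_n : n \in \N\}$ inside the group of Borel automorphisms of $X$, and let $\sigma : \Gamma \actson X$ be the evaluation action. Then $E_\sigma \supseteq E$ since every $(x,y) \in E$ is witnessed by some $\phi_n$, and $E_\sigma \subseteq E$ since each generator moves points only within $E$-classes and $E$ is itself an equivalence relation; hence $E_\sigma = E$.

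The step I expect to be most delicate is the reduction to disjoint domain and range in the second paragraph: directly extending a Borel partial injection whose domain and range overlap to a full Borel automorphism of $X$ is awkward, but cutting it into independent pieces via the Borel $3$-coloring of the degree-$\leq\!2$ graph $G_n$ makes the extension trivial on each piece.
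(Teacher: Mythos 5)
Your proposal is correct, but note that the paper itself gives no proof of this theorem --- it is quoted from Feldman--Moore \cite{fm1} --- so the right comparison is with their classical argument. Your overall architecture (Lusin--Novikov to write $E$ as a countable union of graphs of Borel partial functions, refinement to partial injections using the countability of the horizontal sections, disjointification of domain and range, extension to Borel involutions, and taking the generated countable group) is exactly the Feldman--Moore proof. Where you genuinely diverge is the disjointification step: Feldman and Moore do not need any graph coloring. They fix a countable family $(A_i)$ of Borel sets separating points (e.g.\ a basis for a Polish topology on $X$) and, after discarding the fixed points of $f_n$ (which are absorbed by the identity), cut $f_n$ into the countably many pieces $f_n \restrict \{x \in D_n : x \in A_i,\ f_n(x) \in A_j\}$ over pairs with $A_i \cap A_j = \emptyset$; each piece then has domain inside $A_i$ and range inside $A_j$, hence disjoint. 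This is more elementary than your route through a Borel proper $3$-coloring of the degree-$\leq 2$ graph $G_n$, which is correct but invokes the Kechris--Solecki--Todorcevic $(d+1)$-coloring theorem --- machinery postdating \cite{fm1} by two decades (and note that ``components are paths and cycles'' does not by itself yield a \emph{Borel} coloring; you really do need KST or an equivalent argument there). One small repair your write-up needs either way: you must remove the fixed points of $f_n$ \emph{before} partitioning $D_n$ by color, since a fixed point $x \in D_n^{(k)}$ has $f_n(x) = x \in D_n^{(k)}$, violating the claimed disjointness $D_n^{(k)} \cap f_n\bigl(D_n^{(k)}\bigr) = \emptyset$; this is harmless because the diagonal is covered by the identity, but as stated the claim is literally false on fixed points. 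What your approach buys is a self-contained combinatorial argument that generalizes to other degree-bounded disjointification problems; what the separating-family trick buys is elementarity and fidelity to the original 1977 proof.
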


Now fix a countable Borel equivalence relation $E$ on a $\sigma$-finite standard measure space $(X,\mu)$. On $E$, which is a Borel subset of $X^2$, we define two Borel measures
\begin{equation*}
\mu _{\ast }(A) =\int |A_{x}| d\mu(
x) 
\end{equation*}
and
\begin{equation*}
\mu^{\ast}(A) =\int|A^{y}| d\mu(y)
\end{equation*}
for each Borel $A\subseteq E$. Here, as usual, $A_x=\{y\in X: (x,y)\in A\}$ and $A^y=\{x: (x,y)\in A\}$. One now has the following:

\begin{theorem}[\cite{fm1}]\label{t.nonsingular}
Let $E$ and $(X,\mu)$ be as above. Then the following are equivalent:
\begin{enumerate}
\item $\mu_*$ and $\mu^*$ are absolutely equivalent, i.e., $\mu_*\approx\mu^*$.
\item There is a countable group of non-singular Borel automorphisms of $X$ which induce $E$.
\item Any Borel automorphism whose graph is contained in $E$ preserves the measure class of $\mu$.
\end{enumerate}
\end{theorem}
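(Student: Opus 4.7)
My plan is to prove the implications $(3)\Rightarrow(2)\Rightarrow(1)\Rightarrow(3)$, where the main technical point is a decomposition of Borel subsets of $E$ into countably many ``slices'' that are graphs of Borel bijections, allowing $\mu_*$ and $\mu^*$ to be computed via the pushforward of $\mu$.

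For $(3)\Rightarrow(2)$, I would simply invoke Theorem \ref{t.fm1}: there is a countable group $\Gamma$ of Borel automorphisms of $X$ with $E=E_\Gamma$. Every generator $\gamma\in\Gamma$ satisfies $\graph(\gamma)\subseteq E$, so by (3) each $\gamma$ preserves the measure class of $\mu$, i.e.\ is non-singular. Hence $\Gamma$ is a countable group of non-singular Borel automorphisms inducing $E$.

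For $(1)\Rightarrow(3)$, let $\varphi$ be a Borel automorphism with $\graph(\varphi)\subseteq E$, and let $N\subseteq X$ be Borel with $\mu(N)=0$. Put $A=\{(x,\varphi(x)):x\in N\}\subseteq E$. Since $A$ is contained in a graph, the fiber $A_x$ has at most one element (equal to $\{\varphi(x)\}$ iff $x\in N$), so $\mu_*(A)=\mu(N)=0$. By (1), $\mu^*(A)=0$. But $A^y=\{\varphi^{-1}(y)\}$ iff $y\in\varphi(N)$, so $\mu^*(A)=\mu(\varphi(N))$; hence $\mu(\varphi(N))=0$. The reverse implication $\mu(\varphi(N))=0\Rightarrow\mu(N)=0$ is the same argument in the opposite direction, using $\mu^*(A)=0\Rightarrow\mu_*(A)=0$.

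For $(2)\Rightarrow(1)$, fix a countable group $\Gamma=\{\gamma_n:n\in\N\}$ of non-singular Borel automorphisms with $E=\bigcup_n\graph(\gamma_n)$. Let $A\subseteq E$ be Borel. The sets $B_n=\graph(\gamma_n)\setminus\bigcup_{k<n}\graph(\gamma_k)$ partition $E$, and each $A_n:=A\cap B_n$ is a Borel subset of $\graph(\gamma_n)$. As in the previous step, $\mu_*(A_n)=\mu(\pi_1(A_n))$ and $\mu^*(A_n)=\mu(\gamma_n(\pi_1(A_n)))$, where $\pi_1:E\to X$ is first coordinate projection. Since $\gamma_n$ is non-singular, $\mu_*(A_n)=0$ iff $\mu^*(A_n)=0$. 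Summing over $n$ (which is valid because $\mu_*$ and $\mu^*$ are countably additive on the partition $(A_n)$) gives $\mu_*(A)=0$ iff $\mu^*(A)=0$, which is $(1)$. The step I expect to require the most care is this bookkeeping: one must check that the $B_n$ are Borel, that $\pi_1\!\restriction\! B_n$ is a Borel isomorphism onto its image so that $\mu_*$ and $\mu^*$ really reduce to pushforwards, and that countable additivity is applied correctly; everything else is essentially a direct unfolding of the definitions.
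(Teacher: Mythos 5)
Your proposal is correct, and there is nothing in the paper to compare it against: the theorem is stated without proof, quoted directly from Feldman--Moore \cite{fm1}, and your cycle of implications is precisely the standard argument from that source. In particular, the bookkeeping you flag as delicate all checks out: the $B_n$ are Borel as set differences of graphs of Borel functions, $\pi_1(A_n)$ is Borel without any appeal to injective images since it is the preimage of $A_n$ under the Borel map $x\mapsto(x,\gamma_n(x))$, the identities $\mu_*(A_n)=\mu(\pi_1(A_n))$ and $\mu^*(A_n)=\mu(\gamma_n(\pi_1(A_n)))$ follow because the fibers of a subset of a graph are singletons or empty, and the summation $\mu_*(A)=\sum_n\mu_*(A_n)$ is monotone convergence applied to $|A_x|=\sum_n|(A_n)_x|$.
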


We will say that $E$ is {\bf non-singular} (w.r.t. $\mu$) if one (and all) of the conditions in Theorem \ref{t.nonsingular} holds. When this is the case, let $D(x,y)=\frac{d\mu_*}{d\mu^*}(x,y)$ be the Radon-Nikodym derivative. From now on, we will always assume that $E$ is non-singular.

\begin{exercise}
Prove that if $f\in L^{\infty }(E,\mu_*)$, then
\begin{equation*}
\int fd\mu _{\ast }=\int \left( \sum_{y\in \left[ x\right] _{E}}f\left(
x,y\right) \right) d\mu(x),
\end{equation*}
and similarly that for $f\in L^\infty(E,\mu^*)$ we have
\begin{equation*}
\int fd\mu^*=\int \left( \sum_{x\in \left[ y\right] _{E}}f\left(
x,y\right) \right) d\mu(y).
\end{equation*}
\end{exercise}

\begin{definition}
A function $a\in L^\infty(E,\mu^*)$ is called {\bf left finite} if there is $n\in\N$ such that for $\mu^*$-almost all $(x,y)\in E$ we have
$$
|\{z: a(x,z)\neq 0\}|+|\{z: a(z,y)\neq 0\}|\leq n.
$$
\end{definition}
If $a$ and $b$ are left finite we define the product $ab$ in analogy to matrix multiplication,
$$
(ab)(x,y)=\sum_{z\in [x]_E} a(x,z)b(z,y),
$$
and we also define the ``adjoint matrix'' $a^*$ in the natural way, $a^*(x,y)=\overline{a(y,x)}$ (complex conjugation). The following is easily verified:
\begin{lemma}
The left finite functions are stable under sum, scalar multiplication, product and adjoint, and so they form a $*$-algebra.
\end{lemma}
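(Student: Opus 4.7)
The plan is to handle the four closure properties separately, with sum, scalar multiplication and adjoint being essentially formal, and the product being the only substantive step. Throughout I write $n_a$ for an integer witnessing the left-finiteness of $a$.

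First I would dispatch the easy cases. For $a+b$ and $\lambda a$, the row- and column-supports are contained in (or equal to) those of $a$ and $b$, so $n_a + n_b$ and $n_a$ respectively witness left finiteness, and essential boundedness is clear. For the adjoint $a^*(x,y) = \overline{a(y,x)}$, the key observation is that taking adjoint swaps the roles of rows and columns, so the row-support of $a^*$ at $x$ equals the column-support of $a$ at $x$ and vice versa; hence $n_a$ again witnesses left finiteness. To check $a^* \in L^\infty(E,\mu^*)$, I would use that the flip $(x,y) \mapsto (y,x)$ interchanges $\mu_*$ and $\mu^*$, and that $\mu_* \approx \mu^*$ by Theorem \ref{t.nonsingular}; so $\|a^*\|_\infty = \|a\|_\infty$.

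The real work is closure under product. First, the series $(ab)(x,y) = \sum_{z \in [x]_E} a(x,z)\, b(z,y)$ is well-defined $\mu^*$-a.e.: by left finiteness of $a$, for a.e.\ $x$ the sum has at most $n_a$ nonzero terms, hence is a genuine finite sum bounded in modulus by $n_a \|a\|_\infty \|b\|_\infty$. This also yields $\|ab\|_\infty \leq n_a \|a\|_\infty \|b\|_\infty$. For Borel measurability of $ab$, I would invoke Theorem \ref{t.fm1} to fix a countable group $\Gamma = \{\gamma_1, \gamma_2, \ldots\}$ of Borel automorphisms of $X$ with $E = E_\Gamma$, so that $[x]_E = \{\gamma_n x : n \in \N\}$. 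Calling $\gamma_n$ \emph{first at $x$} if $\gamma_m x \neq \gamma_n x$ for all $m < n$ (a Borel condition in $x$), I can rewrite
\[
(ab)(x,y) \;=\; \sum_{n \in \N} a(x, \gamma_n x)\, b(\gamma_n x, y) \cdot \mathbf{1}[\gamma_n \text{ is first at } x],
\]
which exhibits $ab$ as a countable sum of Borel functions with at most $n_a$ nonzero terms pointwise.

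Finally, left finiteness of $ab$ with witness $2 n_a n_b$ follows from a direct support count: if $(ab)(x,y) \neq 0$, there is $z$ with $a(x,z) \neq 0$ and $b(z,y) \neq 0$; fixing $x$, the possible $z$'s number at most $n_a$, and for each such $z$ the $y$'s with $b(z,y) \neq 0$ number at most $n_b$, bounding the row-support of $ab$ at $x$ by $n_a n_b$; a dual count handles columns. I expect the main obstacle to be the measurability of $ab$, which requires the Feldman-Moore decomposition together with the care to enumerate each element of $[x]_E$ exactly once. Once this is in place, the remaining algebraic identities $((ab)^* = b^* a^*$, associativity, distributivity, $(a^*)^* = a)$ reduce, summand by summand, to finite pointwise matrix-style computations.
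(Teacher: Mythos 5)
Your proof is correct, and in fact the paper offers no proof at all to compare against: the lemma is introduced with ``The following is easily verified,'' so your write-up supplies exactly the verification the authors delegate to the reader. The two points where you go beyond the purely formal are the right ones. For the adjoint, the observation that the flip $(x,y)\mapsto(y,x)$ exchanges $\mu_*$ and $\mu^*$, combined with $\mu_*\approx\mu^*$ from Theorem \ref{t.nonsingular}, is precisely what makes the symmetric-looking definition of left finiteness (which bounds the row support at $x$ \emph{and} the column support at $y$ along a.e.\ pair $(x,y)\in E$) transfer to $a^*$ with the same witness $n_a$. For the product, the Feldman--Moore enumeration from Theorem \ref{t.fm1} with your ``first at $x$'' indicator is the standard device for exhibiting $(ab)(x,y)$ as a pointwise-finite countable sum of Borel functions, and your support counts giving the witness $2n_an_b$ and the bound $\|ab\|_\infty\leq n_a\|a\|_\infty\|b\|_\infty$ are correct.

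One small refinement would make the pointwise counting in the product step fully rigorous: the hypotheses on $a$ and $b$ are only $\mu^*$-a.e.\ statements, while your count ``for each such $z$ the $y$'s with $b(z,y)\neq 0$ number at most $n_b$'' is applied at points $z$ produced by the argument, not at a.e.\ pairs directly. The clean fix is to note that $\mu^*$-null subsets of $E$ are exactly those whose vertical sections over a conull set of base points are empty, and that by non-singularity (Theorem \ref{t.nonsingular}) the $E$-saturation of a $\mu$-null set is $\mu$-null; hence one may pass to an $E$-invariant conull Borel set $X_0\subseteq X$ on which the left-finiteness bounds for $a$ and $b$, and the pointwise bounds $|a|\leq\|a\|_\infty$, $|b|\leq\|b\|_\infty$, hold for \emph{all} pairs in $E\cap(X_0\times X_0)$. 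On this set every count in your argument is literally valid, and since $E\cap(X_0\times X_0)$ is $\mu^*$-conull in $E$, the conclusions hold a.e.\ as required. This is routine, but it is the one genuinely measure-theoretic step hiding behind the matrix-algebra formalism, and it is worth a sentence.
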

For each left finite $a\in L^\infty(E,\mu^*)$ we can define an operator $L_a$ on $L^2(E,\mu^*)$ by
$$
\left( L_{a}\psi \right) \left( x,y\right) =\sum_{z\in [x]_E} a(x,z)\psi(z,y). 
$$
\begin{lemma}
Every operator of the form $L_a$ is bounded when $a$ is left finite. Moreover, for all $a,b\in L^\infty(E,\mu^*)$ left finite we have $L_a L_b=L_{ab}$ and $L_a^*=L_{a^*}$. Thus $\mathcal A=\{L_a:a\text{ is left finite}\}$ forms a $*$-subalgebra of $B(L^2(E,\mu^*))$.
\end{lemma}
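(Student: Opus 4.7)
The plan is to verify the three claims in order: boundedness of $L_a$, multiplicativity $L_aL_b=L_{ab}$, and $L_a^*=L_{a^*}$, from which the $*$-subalgebra property follows immediately (once one also checks that the product and adjoint of left finite functions are again left finite, which is a direct count of supports from the definition).

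For boundedness, I would fix a bound $n$ so that $|\{z:a(x,z)\neq 0\}|\le n$ and $|\{z:a(z,y)\neq 0\}|\le n$ hold $\mu^*$-a.e. (both bounds depend only on the first or second coordinate, so this is the useful reformulation of left finiteness). Then apply Cauchy--Schwarz to the finite sum defining $L_a\psi$:
\[
|(L_a\psi)(x,y)|^2\le \|a\|_\infty^2\cdot n\sum_{z\in[x]_E}|\psi(z,y)|^2\mathbf{1}_{a(x,z)\neq 0}.
\]
Integrating over $(x,y)\in E$ with respect to $\mu^*$, I would use the disintegration formula from the exercise, $\int f\,d\mu^*=\int\sum_{x\in[y]_E}f(x,y)\,d\mu(y)$, to exchange the order of summation: after the swap, the innermost sum becomes $|\{x\in[y]_E:a(x,z)\neq 0\}|\le n$ (here I use that any $x$ with $a(x,z)\neq 0$ is automatically in $[z]_E=[y]_E$). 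What remains is $\int\sum_{z\in[y]_E}|\psi(z,y)|^2\,d\mu(y)=\|\psi\|_2^2$, giving $\|L_a\|\le n\|a\|_\infty$.

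For $L_aL_b=L_{ab}$, the computation is a routine double sum: all sums are finite by left finiteness, so Fubini causes no trouble and reordering yields the ``matrix product'' $(ab)(x,y)=\sum_{z\in[x]_E}a(x,z)b(z,y)$. The key subtlety for the adjoint $L_a^*=L_{a^*}$ is that although the measure $\mu^*$ is in general not equal to $\mu_*$ (their Radon--Nikodym derivative is $D$), the identity nevertheless holds without any correction by $D$. I would verify this by expanding $\langle L_a\psi,\phi\rangle$ using $\int f\,d\mu^*=\int\sum_{x\in[y]_E}f(x,y)\,d\mu(y)$ to rewrite it as
\[
\int\Bigl(\sum_{z\in[y]_E}\psi(z,y)\sum_{x\in[y]_E}a(x,z)\overline{\phi(x,y)}\Bigr)d\mu(y),
\]
then expanding $\langle\psi,L_{a^*}\phi\rangle$ the same way, and observing that a relabelling of the dummy variables $x\leftrightarrow z$ (legitimate because, inside a single equivalence class, both range over $[y]_E$) identifies the two expressions.

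I expect the main obstacle to be precisely this final point: seeing that the asymmetry between $\mu_*$ and $\mu^*$ does not enter the adjoint formula. The trick is to iterate the disintegration against $\mu$ on just the $y$-coordinate on both sides, which collapses the whole question to a symmetric double sum over $x,z\in[y]_E$ where relabelling is transparent. Once boundedness, multiplicativity, and the adjoint identity are in hand, $\mathcal A$ is closed under $+,\cdot,\,^*$ and scalar multiplication, hence is a $*$-subalgebra of $B(L^2(E,\mu^*))$.
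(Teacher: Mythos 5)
Your proof is correct and follows essentially the same route as the paper: the boundedness estimate is the identical Cauchy--Schwarz argument (bound the inner sum by the row count, then swap the $x$ and $z$ sums over $[y]_E$ and use the column count to obtain $\|L_a\|\leq n\|a\|_\infty$). The paper explicitly leaves $L_aL_b=L_{ab}$ and $L_a^*=L_{a^*}$ to the reader, and your worked-out adjoint check---disintegrating the inner product over the second coordinate against $\mu$, which correctly explains why no Radon--Nikodym factor $D$ enters---is a valid completion of that omitted step.
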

\begin{proof}
Fix a left finite function $a$ and $n$ such that $|\{z: a(x,z)\neq 0\}|+|\{z: a(z,y)\neq 0\}|\leq n$ for almost all $(x,y)\in E$. We will show that $\|L_a\|\leq n\|a\|_\infty$. For $\psi\in L^2(E,\mu^*)$ we have
\begin{equation}\label{eq.fmbd}
\|(L_a\psi)\|^2=\int\left(\sum_{x\in [y]_E} |(L_a\psi)(x,y)|^2\right)d\mu(y)=\int\left(\sum_{x\in [y]_E} |\sum_{z\in [y]_E} a(x,z)\psi(z,y)|^2\right)d\mu(y).
\end{equation}
Fix a typical $y$. For $x\in [y]_E$ fixed, we get from the Cauchy-Schwarz inequality and left finiteness of $a$ that
$$
|\sum_{z\in [y]_E} a(x,z)\psi(z,y)|^2\leq n\sum_{z\in [y]_E} |a(x,z)\psi(z,y)|^2\leq n\|a\|_\infty^2\sum_{z\in [y]_E, a(x,z)\neq 0} |\psi(z,y)|^2.
$$
Further,
$$
\sum_{x\in [y]_E}\sum_{z\in [y]_E, a(x,z)\neq 0} |\psi(z,y)|^2=\sum_{z\in [y]_E}\sum_{x\in [y]_E, a(x,z)\neq 0} |\psi(z,y)|^2\leq n \sum_{z\in [y]_E}|\psi(z,y)|^2,
$$
where the last inequality follows since $a$ is left finite. Combining this with \eqref{eq.fmbd} we get
$$
\|(L_a\psi)\|^2\leq n^2\|a\|_\infty^2\int\left(\sum_{z\in [y]_E}|\psi(z,y)|^2\right) d\mu(y)=n^2\|a\|_\infty^2\|\psi\|^2,
$$
as required. The remaining claims are left for the reader to verify.
\end{proof}

\begin{definition}
We define
$$
M(E)=\{L_a:a\text{ is left finite}\}^{\prime\prime}
$$
and call this the von Neumann algebra of the equivalence relation $E$.
\end{definition}

\begin{exercise}
Show that if on $n=\{0,1\ldots,n-1\}$ we take $E=n\times n$, then $M(E)\simeq M_n(\C)$. Also, describe $M(E)$ with other choices of $E\subseteq n\times n$.
\end{exercise}

It is easy to see that $M(E)$ only depends on $E$ up to orbit equivalence, i.e., if $F$ is a non-singular countable Borel equivalence relation on $(Y,\nu)$ and there is a non-singular Borel bijection $T$ such that $xE y\iff T(x)FT(y)$ a.e., then $M(F)\simeq M(E)$ as von Neumann algebras.

Following what is standard notation, we denote by $[E]$ the group of all non-singular bijections $\phi:X\to X$ which satisfy $xE\phi(x)$ a.e., and we let $[[E]]$ denote the semigroup of all \emph{partial} $\phi:A\to B$, where $A,B\subseteq X$ are Borel sets, which satisfy $xE\phi(x)$ for a.a.\ $x\in\dom(\phi)$. Given $\phi\in [[E]]$ and a function $f\in L^\infty(X,\mu)$, a left finite function $a_{\phi,f}$ is defined by
$$
a_{\phi,f}\left( x,y\right) =
\begin{cases}
f(x) & \text{if }y=\phi(x),\\ 
0 & \text{otherwise.}
\end{cases}
$$
When $f=1$, the constant $1$ function, then we will write $a_{\phi}$ for $a_{\phi,1}$. A direct calculation shows that 
\begin{equation}\label{eq.covrel}
a_{\phi_0,f}a_{\phi_1,g}=a_{\phi_1\circ\phi_0,(g\circ\phi_0) f}.
\end{equation} 
In particular, letting $\Delta(x)=x$ for all $x\in X$, we have that $a_{\Delta,f}a_{\Delta,g}=a_{\Delta,fg}$. It follows that $f\mapsto L_{a_{\Delta,f}}$ provides an embedding of $L^\infty(X,\mu)$ into $M(E)$. That is, $L^\infty(X,\mu)$ is naturally identified with the ``diagonal matrices'' in $M(E)$. When there is no danger of confusion we will therefore write $f$ for $L_{a_{\Delta,f}}$ and write $L^\infty(X,\mu)$ for $\{L_{a_{\Delta,f}}:f\in L^\infty(X,\mu)\}$.

We assume from now on that $E$ is \emph{aperiodic}, i.e., that all $E$ classes are infinite. It is then clear from Theorem \ref{t.fm1} that one can find a sequence $(\phi_n)_{n\in\N_0}$ in $[E]$ whose graphs are pairwise disjoint and $E=\bigsqcup_{n\in\N_0} \graph(\phi_n)$. It is practical to always assume that $\phi_0=\Delta$. The following lemma provides a useful standard form for the operators in $M(E)$.

\begin{lemma}
Let $(\phi_{n})_{n\in\N_0}$ be a sequence in $\left[ E\right]$  with pairwise disjoint graphs whose union is $E$. Any element $x\in M(E)$ can be written uniquely as
$$
x=\sum_{n=0}^{\infty }f_n L_{a_{\phi_n}}
$$
for some sequence $(f_{n})_{n\in\N_0}$ in $L^{\infty}(X,\mu)$ and with convergence in the weak topology. In particular,
\begin{equation}\label{eq.generate}
M(E)=\left(L^\infty(X,\mu)\cup\{L_{a_{\phi_n}}:n\in\N_0\}\right)^{\prime\prime}.
\end{equation}

\end{lemma}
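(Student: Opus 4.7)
The plan is to establish a Fourier-type expansion for $M(E)$ in direct analogy with the group von Neumann algebra: here $L^{\infty}(X,\mu)$ plays the role of the scalar field and the unitaries $L_{a_{\phi_n}}$ play the role of group elements, so one expects operators in $M(E)$ to decompose as ``sums of monomials'' $f_n L_{a_{\phi_n}}$. I may assume $\mu$ is a probability measure (the $\sigma$-finite case reduces to this by exhaustion); the canonical test vector is $\xi_0:=\chi_{\graph(\Delta)}\in L^2(E,\mu^*)$, a unit vector.

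For a left finite function $a$, the partition $E=\bigsqcup_n\graph(\phi_n)$ gives the pointwise decomposition $a(x,y)=\sum_n f_n(x)\mathbf{1}_{y=\phi_n(x)}$ with $f_n(x):=a(x,\phi_n(x))$ and only finitely many $f_n$ nonzero; applying \eqref{eq.covrel} with $\phi_0=\Delta$ yields the factorization $a_{\phi_n,f_n}=a_{\Delta,f_n}\cdot a_{\phi_n,1}$, so that $L_a=\sum_n f_n L_{a_{\phi_n}}$ as a finite sum with $f_n\in L^{\infty}(X,\mu)$. This handles the left finite case and also motivates the construction for general $T$.

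For arbitrary $T\in M(E)$, I define $f_n\in L^{\infty}(X,\mu)$ as the ``$\Delta$-diagonal'' of $TL_{a_{\phi_n}}^{*}\xi_0\in L^2(E,\mu^*)$, intrinsically captured by the sesquilinear identity
\[
\langle TL_{a_{\phi_n}}^{*} h\xi_0,\,g\xi_0\rangle=\int_X f_n(x)h(x)\overline{g(x)}\,d\mu(x),\qquad h,g\in L^{\infty}(X,\mu)\cap L^2(X,\mu).
\]
This identity holds for left finite $T=L_a$ by a direct computation using \eqref{eq.covrel}, and extends to all of $M(E)$ by Kaplansky density together with the fact that right multiplication is weakly continuous. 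Since $L_{a_{\phi_n}}$ is unitary in $M(E)$ we have $\|TL_{a_{\phi_n}}^{*}\|=\|T\|$, so the left side is bounded by $\|T\|\|h\|_2\|g\|_2$; this forces $\|f_n\|_{\infty}\le\|T\|$. A parallel calculation identifies $f_n(x)=(T\xi_0)(x,\phi_n(x))$ almost everywhere, recovering the left finite formula.

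The hardest step is the weak convergence of $S_N:=\sum_{n=0}^{N}f_n L_{a_{\phi_n}}$ to $T$, since the partial sums need not be uniformly bounded in operator norm; my plan is to argue pointwise in $L^2(E,\mu^*)$ rather than as operators. For $\psi=\chi_A\xi_0$ with $A\subseteq X$ measurable, applying the previous analysis to $TL_{a_{\Delta,\chi_A}}\in M(E)$ identifies the $\phi_n$-component of $T(\chi_A\xi_0)\in L^2(E,\mu^*)$ as $f_n(x)\chi_A(\phi_n(x))\mathbf{1}_{y=\phi_n(x)}$, which is exactly $(f_n L_{a_{\phi_n}})(\chi_A\xi_0)$. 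Pairwise orthogonality of the subspaces $L^2(\graph(\phi_n),\mu^{*}\restriction\graph(\phi_n))$ then gives $S_N(\chi_A\xi_0)\to T(\chi_A\xi_0)$ in norm together with the uniform bound $\|S_N(\chi_A\xi_0)\|\le\|T(\chi_A\xi_0)\|$. The vectors $L_{a_{\phi_k}}(\chi_A\xi_0)$, as $k$ and $A$ vary, span a dense subspace of $L^2(E,\mu^*)$; applying the same orthogonality argument to $TL_{a_{\phi_k}}\in M(E)$ and re-indexing the partition via the decomposition of $\phi_k\phi_n$ along $E=\bigsqcup_m\graph(\phi_m)$ extends strong convergence to this dense set, from which weak operator convergence $S_N\to T$ follows. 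Uniqueness is then immediate: if $\sum_n f_n L_{a_{\phi_n}}=0$ weakly, applying to $\xi_0$ and projecting onto $L^2(\graph(\phi_m))$ recovers $f_m$ (the other $n\neq m$ terms vanish a.e.\ by disjointness of the graphs), forcing $f_m=0$. The identity \eqref{eq.generate} follows since each summand $f_n L_{a_{\phi_n}}$ lies in the right-hand side.
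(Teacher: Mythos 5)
The paper itself skips the proof of this lemma (``The proof, which we will skip, can be done by hand''), so there is nothing to compare against and your argument must stand on its own. Most of it does: the identification $f_n(x)=(T\xi_0)(x,\phi_n(x))$, the sesquilinear estimate forcing $\|f_n\|_\infty\le\|T\|$, the orthogonal decomposition $L^2(E,\mu^*)=\bigoplus_n L^2(\graph(\phi_n),\mu^*)$, the computation that on vectors $L_{a_{\phi_k}}(\chi_A\xi_0)$ the partial sums $S_N$ act as the cut-downs $\psi\mapsto\chi_{C_{N,k}}\cdot(T\psi)$ with $C_{N,k}=\bigsqcup_{n\le N}\graph(\phi_k\circ\phi_n)$, and the uniqueness argument are all correct, and the reduction to a probability measure is legitimate (the Radon--Nikodym factor relating the two versions of $\mu^*$ depends only on the second coordinate, hence the multiplication by its square root intertwines each $L_a$ with itself). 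One slip in the warm-up: left finiteness bounds the number of nonzero entries in each \emph{row and column}, not the number of graphs $\graph(\phi_n)$ that the support of $a$ meets, so even for left finite $a$ the expansion is in general an infinite sum (partition $X=\bigsqcup_n X_n$ and take $a=\sum_n a_{\phi_n,\chi_{X_n}}$); this is harmless, since the truncations $a\chi_{C_{N,0}}$ have the same left-finiteness constant, so the operators $L_{a\chi_{C_{N,0}}}$ are uniformly bounded and converge strongly to $L_a$.

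The genuine gap is the final inference. Strong convergence of $S_N$ on a dense linear subspace does \emph{not} yield weak operator convergence $S_N\to T$ unless $\sup_N\|S_N\|<\infty$, and you concede --- correctly --- that no such uniform bound is available; your estimate $\|S_N\psi\|\le\|T\psi\|$ holds only on the special vectors, and cannot be assembled across the non-orthogonal families $(C_{N,k})_k$. Moreover the gap is unpatchable for the raw partial sums: a WOT-convergent sequence of operators is norm bounded by Banach--Steinhaus, and norm boundedness genuinely fails. Concretely, let $E$ be induced by an ergodic, a.e.\ free, measure preserving transformation $S$ of a probability space, let $(\phi_n)$ enumerate $(S^k)_{k\in\Z}$ symmetrically with $\phi_0=\Delta$, let $u=L_{a_{S}}$, and let $x=F(u)$ for $F\in L^\infty(\T)$. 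Since $\langle u^k\xi_0,\xi_0\rangle=\delta_{k,0}$, the cyclic subspace generated by $\xi_0$ under the von Neumann algebra generated by $u$ is unitarily $L^2(\T)$ with $u$ acting as multiplication by $e^{i\theta}$; the coefficients of $x$ are the constants $\hat{F}(k)$, so this subspace is $S_N$-invariant and $S_N$ restricts to multiplication by the classical Dirichlet partial sum of $F$. Choosing $F$ with $\sup_N$ of the sup-norms of its Dirichlet partial sums infinite (such $F$ exists, even continuous, since the Dirichlet kernels have unbounded $L^1$-norms) gives $\sup_N\|S_N\|=\infty$, so the series does not converge to $x$ in the weak operator topology at all. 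What your (correct) orthogonality analysis actually proves --- and the form in which such Fourier-type expansions are standardly stated, the same caveat arising already for $L(\Gamma)$ --- is convergence in the $2$-norm $\|y\|_2=\|y\xi_0\|_{L^2(E,\mu^*)}$ together with strong convergence on your dense span, plus uniqueness of the coefficients. Note finally that \eqref{eq.generate} then requires a separate (easy) argument rather than following from a WOT limit of the $S_N$: it suffices that each left finite $L_a$ is the strong limit of its uniformly bounded truncations $L_{a\chi_{C_{N,0}}}=\sum_{n\le N}f_nL_{a_{\phi_n}}$, each of which lies in the right-hand side of \eqref{eq.generate}.
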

The proof, which we will skip, can be done by hand and is not that hard. The reader should be warned, though, that the Lemma does not provide any information about \emph{which} sequences $(f_n)$ define operators in $M(E)$ in this way. It does allow us to prove two key results about $M(E)$.

\begin{theorem}
$L^{\infty }(X,\mu)$ is a Cartan subalgebra of $M(E)$.
\end{theorem}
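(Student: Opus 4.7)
The plan is to verify the two defining properties of a Cartan subalgebra separately: that $L^{\infty}(X,\mu)$ is maximal abelian in $M(E)$, and that its unitary normalizer in $M(E)$ generates $M(E)$. Both arguments will lean on the standard form $x=\sum_n f_n L_{a_{\phi_n}}$ just established, together with the product rule \eqref{eq.covrel} for the elementary operators $L_{a_{\phi,f}}$.

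Regularity is the quicker of the two. Each $\phi\in[E]$ is a non-singular Borel bijection $X\to X$, so a direct computation from \eqref{eq.covrel} (using $a_\phi^*=a_{\phi^{-1}}$) gives $a_\phi a_{\phi^{-1}}=a_{\Delta,1}=a_{\phi^{-1}}a_\phi$, whence $L_{a_\phi}$ is a unitary in $M(E)$. Applying \eqref{eq.covrel} twice shows that for $g\in L^{\infty}(X,\mu)$,
\[
L_{a_\phi}\,L_{a_{\Delta,g}}\,L_{a_\phi}^{*} = L_{a_{\Delta,\,g\circ\phi}},
\]
so conjugation by $L_{a_\phi}$ preserves the image of $L^{\infty}(X,\mu)$ in $M(E)$; that is, each $L_{a_\phi}$ lies in the unitary normalizer of $L^{\infty}(X,\mu)$. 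Since every element of $L^{\infty}(X,\mu)$ is a linear combination of its own unitaries (by functional calculus), $L^{\infty}(X,\mu)$ itself is contained in the von Neumann algebra generated by the normalizer. Combining this with \eqref{eq.generate} applied to the chosen sequence $(\phi_n)$ (each of which lies in $[E]$) shows that the normalizer generates all of $M(E)$.

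For maximal abelianness, suppose $x\in M(E)\cap L^{\infty}(X,\mu)'$ and write it uniquely as $x=\sum_{n\ge 0} f_n L_{a_{\phi_n}}$ with $\phi_0=\Delta$. Using \eqref{eq.covrel}, a short calculation gives
\[
g x - x g \;=\; \sum_{n\ge 0} L_{a_{\phi_n,\,(g-g\circ\phi_n)\,f_n}}
\]
for every $g\in L^{\infty}(X,\mu)$. By the uniqueness part of the standard form lemma, $x$ commuting with every such $g$ forces $(g-g\circ\phi_n)\,f_n=0$ a.e.\ for each $n\ge 1$ and every $g\in L^{\infty}(X,\mu)$. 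The main obstacle is converting this ``for every $g$'' condition into the conclusion $f_n=0$ a.e.; here I plan to use that since the graphs of $\phi_n$ ($n\ge 0$) are pairwise disjoint and $\phi_0=\Delta$, each $\phi_n$ with $n\ge 1$ is fixed-point free almost everywhere. Choosing a countable family $\{A_k\}\subseteq X$ of Borel sets that separates points, and applying the constraint with $g=\chi_{A_k}$, gives a conull set on which $x$ and $\phi_n(x)$ lie in exactly the same $A_k$'s; by the separating property this forces $\phi_n(x)=x$ on the set where $f_n\neq 0$, contradicting fixed-point freeness unless $f_n=0$ a.e. Hence $x=f_0\in L^{\infty}(X,\mu)$, which is exactly the statement $L^{\infty}(X,\mu)'\cap M(E)=L^{\infty}(X,\mu)$.
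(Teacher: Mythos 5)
Your proposal is correct and follows essentially the same route as the paper's proof: maximal abelianness via the standard-form expansion, uniqueness, and the identity $f_n(g-g\circ\phi_n)=0$ forcing $\phi_n(x)=x$ on $\{f_n\neq 0\}$ (impossible for $n\geq 1$ since the graphs of the $\phi_n$ are pairwise disjoint from $\graph(\Delta)$), and regularity via the unitaries of $L^\infty(X,\mu)$ together with the $L_{a_\phi}$, $\phi\in[E]$. Your countable separating family $\{A_k\}$ merely makes explicit the quantifier/null-set bookkeeping that the paper passes over in the sentence ``for \emph{any} $g\in L^\infty(X,\mu)$, which means that $\phi_n(x)=x$,'' and is a sound way to do it.
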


\begin{proof}
We first prove that $L^\infty(X,\mu)$ is a maximal Abelian subalgebra. For this, suppose $T\in L^\infty(X,\mu)^\prime \cap M(E)$. Using the previous lemma, write $T=\sum_{n=0}^\infty f_n L_{a_{\phi_n}}$. Let $g\in L^\infty(X,\mu)$. Using that $gT=Tg$ and \eqref{eq.covrel} above we get
$$
\sum_{n=0}^\infty g f_n L_{a_{\phi_n}}=\sum_{n=0}^\infty f_n L_{a_{\phi_n}}g=\sum_{n=0}^\infty f_n (g\circ\phi_n) L_{a_{\phi_n}},
$$
and so from the uniqueness of the expansion we have $f_n(g-g\circ\phi_n)=0$. Thus if $f_n(x)\neq 0$ it follows that $g(x)=g(\phi_n(x))$ for \emph{any} $g\in L^\infty(X,\mu)$, which means that $\phi_n(x)=x$ whenever $f_n(x)\neq 0$. But this shows that $T=f_0L_{a_{\phi_0}}=f_0L_{a_{\Delta}}$, that is, $T\in L^\infty(X,\mu)$.

To see that $L^{\infty}(X,\mu)$ is regular in $M(E)$, observe that the normalizer of $L^{\infty}(X,\mu)$ in $M(E)$ contains the unitary elements of $L^{\infty}(X,\mu)$ as well as all $L_{a_{\phi}}$ for $\phi\in \left[ E\right] $, and so generates $M(E)$.
\end{proof}

\begin{theorem}
$M(E)$ is a factor iff $E$ is ergodic.
\end{theorem}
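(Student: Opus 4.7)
The plan is to prove both directions by analysing the center $\mathcal Z(M(E))=M(E)\cap M(E)'$ using the two tools the excerpt has just handed us: the normal form $T=\sum_{n=0}^{\infty} f_n L_{a_{\phi_n}}$ from the preceding lemma, and the covariance relation \eqref{eq.covrel}. Everything hinges on the fact that, under the identification $L^{\infty}(X,\mu)\hookrightarrow M(E)$, multiplication on the two sides of $L_{a_\phi}$ acts differently on the diagonal factor.

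For the easy direction, assume $E$ is not ergodic, so there exists a Borel $A\subseteq X$ with both $A$ and $X\setminus A$ of positive measure and $[A]_E=A$ up to a null set. Set $p=\chi_A$, viewed as an element of $L^{\infty}(X,\mu)\subseteq M(E)$. Since $L^{\infty}(X,\mu)$ is abelian, $p$ commutes with every $f\in L^{\infty}(X,\mu)$. For any $\phi\in[E]$, $E$-invariance of $A$ gives $\chi_A\circ\phi=\chi_A$ almost everywhere, hence by \eqref{eq.covrel}
\[
p\cdot L_{a_{\phi}}=L_{a_{\phi,\chi_A}}=L_{a_{\phi,\chi_A\circ\phi}}=L_{a_{\phi}}\cdot p.
\]
Thus $p$ commutes with every generator in \eqref{eq.generate}, so $p\in\mathcal Z(M(E))$; but $0<\mu(A)<\mu(X)$ forces $p\neq cI$ for any scalar $c$, so $M(E)$ is not a factor.

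For the converse, assume $E$ is ergodic and pick $T\in\mathcal Z(M(E))$. Since $\mathcal Z(M(E))\subseteq L^{\infty}(X,\mu)'\cap M(E)$, the previous theorem (which showed $L^{\infty}(X,\mu)$ is maximal abelian in $M(E)$) already gives $T\in L^{\infty}(X,\mu)$; call the corresponding function $f$. Now for each $n$ apply the fact that $T$ must also commute with $L_{a_{\phi_n}}$. Using \eqref{eq.covrel} one more time,
\[
f\cdot L_{a_{\phi_n}}=L_{a_{\phi_n,f}},\qquad L_{a_{\phi_n}}\cdot f=L_{a_{\phi_n,f\circ\phi_n}},
\]
and the uniqueness of the normal form forces $f=f\circ\phi_n$ almost everywhere for every $n$. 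Since the graphs of the $\phi_n$ exhaust $E$, the function $f$ is $E$-invariant modulo null sets. Ergodicity of $E$ then forces $f$ to be constant a.e., so $T=cI$ for some scalar $c$, and $\mathcal Z(M(E))=\C I$.

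There is no real obstacle here: the core work has already been done in the proof that $L^{\infty}(X,\mu)$ is a Cartan subalgebra, and the converse direction is essentially that same argument run once more with $L_{a_{\phi_n}}$ in place of $g$. The only point to be slightly careful about is the first direction, where one should confirm that $E$-invariance of $A$ genuinely gives $\chi_A\circ\phi=\chi_A$ for all $\phi\in[E]$ (this is immediate from $\phi(x)E x$ a.e.); once this is in hand the covariance relation does the rest.
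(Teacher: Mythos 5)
Your proof is correct and takes essentially the same approach as the paper's: both directions come down to locating $\mathcal Z(M(E))$ inside $L^{\infty}(X,\mu)$ via the Cartan theorem and then testing centrality against the operators $L_{a_{\phi}}$, $\phi\in[E]$, to identify central elements with $E$-invariant functions. Your first direction is simply the contrapositive of the paper's (which deduces directly from \eqref{eq.generate} that an $E$-invariant $f$ is central), with the covariance relation \eqref{eq.covrel} and the uniqueness of the normal form spelled out explicitly.
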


\begin{proof}
Suppose first that $M(E)$ is a factor, and let $f\in L^\infty(X,\mu)$ be an $E$-invariant function. It follows from \eqref{eq.generate} that $f\in\mathcal Z(M(E))$, thus $f$ is a constant multiple of $1$. Conversely, if $E$ is ergodic, let $x\in\mathcal Z(M(E))$. From the previous theorem we know that $\mathcal Z(M(E))\subseteq L^\infty(X,\mu)$, and so $x=f$ for some $f\in L^\infty(X,\mu)$. Since $f$ is central we have that $fL_{a_\phi}=L_{a_\phi} f$ for all $\phi\in [E]$, and so $f\circ\phi=f$ for all $\phi\in E$. Whence $f$ is $E$ invariant, and therefore constant, which shows that $\mathcal Z(M(E))=\C 1$.
\end{proof}

With a little more effort one can go on to prove the following interesting theorem:

\begin{theorem}
Let $E$ and $F$ be non-singular countable Borel equivalence relations on standard $\sigma$-finite measure spaces $(X,\mu)$ and $(Y,\nu)$, respectively. Then $E$ is orbit equivalent to $F$ if and only if the inclusions $L^\infty(X,\mu)\subseteq M(E)$ and $L^\infty(Y,\nu)\subseteq M(F)$ are isomorphic, i.e., there is an isomorphism of $M(E)$ and $M(F)$ which maps $L^\infty(X,\mu)$ onto $L^\infty(Y,\nu)$.
\end{theorem}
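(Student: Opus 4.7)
The plan is to prove both implications. The forward direction is a matter of spatially intertwining the two matrix-algebra constructions via the orbit equivalence; the reverse direction reduces, through countable generation of $E$ (Theorem \ref{t.fm1}) and a standard-form analysis of normalizers, to identifying $[F]$ as precisely the non-singular Borel automorphisms of $(Y,\nu)$ implemented by unitaries in $M(F)$ normalizing $L^\infty(Y,\nu)$.

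\textbf{Forward direction.} Suppose $T:X\to Y$ is a non-singular Borel bijection with $xEy\Leftrightarrow T(x)FT(y)$ a.e. Lift $T$ to $\tilde T:E\to F$, $\tilde T(x,y)=(T(x),T(y))$, a Borel bijection modulo a null set. By Theorem \ref{t.nonsingular} the non-singularity of $T$ yields $\tilde T_*\mu^*\approx\nu^*$; let $h$ be the square-root of $d(\tilde T_*\mu^*)/d\nu^*$. Then $U:L^2(E,\mu^*)\to L^2(F,\nu^*)$, $(U\psi)(u,v)=h(u,v)\psi(\tilde T^{-1}(u,v))$, is unitary. A direct computation (absorbing the Radon--Nikodym cocycle from $h$ into the coefficient) shows that for each left-finite $a$ on $E$, $UL_aU^*=L_{\tilde a}$ for some left-finite $\tilde a$ on $F$ supported on $\tilde T(\operatorname{supp}(a))$. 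Hence $\operatorname{Ad}(U)$ is a $*$-isomorphism $M(E)\to M(F)$ which on diagonal elements acts by $L_{a_{\Delta,f}}\mapsto L_{a_{\Delta,f\circ T^{-1}}}$, carrying $L^\infty(X,\mu)$ onto $L^\infty(Y,\nu)$.

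\textbf{Reverse direction.} Let $\Phi:M(E)\to M(F)$ be a $*$-isomorphism with $\Phi(L^\infty(X,\mu))=L^\infty(Y,\nu)$. Its restriction to the (maximal abelian) diagonal is an isomorphism of commutative von Neumann algebras, hence implemented by a non-singular Borel bijection $S:Y\to X$ modulo null sets: $\Phi(f)=f\circ S$. Set $T:=S^{-1}:X\to Y$; I claim $T$ is an orbit equivalence. By Theorem \ref{t.fm1}, pick a countable subgroup $\Gamma\subseteq[E]$ with $E=E_\Gamma$. For each $\phi\in\Gamma$, $u_\phi:=L_{a_\phi}$ is a unitary in $M(E)$, and \eqref{eq.covrel} gives $u_\phi m_f u_\phi^*=m_{f\circ\phi}$. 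Applying $\Phi$, the unitary $V_\phi:=\Phi(u_\phi)\in M(F)$ normalizes $L^\infty(Y,\nu)$ and induces the automorphism $g\mapsto g\circ\alpha_\phi$, where $\alpha_\phi:=T\circ\phi\circ T^{-1}$ is \emph{a priori} only a non-singular Borel bijection of $(Y,\nu)$.

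The principal technical step is to show $\alpha_\phi\in[F]$. Using the standard-form lemma, pick $(\psi_n)\subseteq[F]$ with pairwise disjoint graphs covering $F$ and expand $V_\phi=\sum_n g_n L_{a_{\psi_n}}$. For any $h\in L^\infty(Y,\nu)$, the identity $V_\phi h=(h\circ\alpha_\phi)V_\phi$ translates, via \eqref{eq.covrel}, into
$$\sum_n g_n(h\circ\psi_n)L_{a_{\psi_n}}=\sum_n (h\circ\alpha_\phi)g_n L_{a_{\psi_n}}.$$
Uniqueness of the expansion forces $g_n(y)\bigl(h(\psi_n(y))-h(\alpha_\phi(y))\bigr)=0$ for every $h$, so $\alpha_\phi(y)=\psi_n(y)$ wherever $g_n(y)\neq 0$. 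Since each $\psi_n\in[F]$ and unitarity of $V_\phi$ makes $\bigcup_n\{g_n\neq 0\}$ conull, this forces $\alpha_\phi(y)\,F\,y$ a.e., i.e.\ $\alpha_\phi\in[F]$. Unpacking $\alpha_\phi=T\phi T^{-1}$ gives $T(\phi(x))\,F\,T(x)$ for a.e.\ $x\in X$; intersecting the conull sets over the countable family $\Gamma$ and using $E=E_\Gamma$, we conclude $xEy\Rightarrow T(x)FT(y)$ a.e. The symmetric argument applied to $\Phi^{-1}$, starting from a countable generating family for $F$, yields the reverse implication.

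\textbf{Main obstacle.} The heart of the proof is the normalizer calculation: ruling out the possibility that a unitary in $M(F)$ normalizing $L^\infty(Y,\nu)$ might implement an automorphism of $(Y,\nu)$ outside $[F]$. The uniqueness clause of the standard-form lemma is what tames this, but one must keep careful track of the null sets accumulated across the countable bookkeeping indexed by $\Gamma$ (and its counterpart for $F$).
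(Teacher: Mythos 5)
The paper itself offers no proof of this theorem --- it is stated with the remark that ``with a little more effort one can go on to prove'' it, the details being deferred to \cite{fm2} --- so there is nothing internal to compare against line by line. Your argument is, however, correct and is precisely the intended Feldman--Moore argument, built from the tools the paper has just set up: the covariance relation \eqref{eq.covrel}, the standard-form expansion lemma with its uniqueness clause, and the same ``test against all $h\in L^\infty$'' maneuver the paper uses to prove that $L^\infty(X,\mu)$ is maximal Abelian. Three small points to tighten. First, in the forward direction the cocycle correction is in fact trivial: computing $\tilde T_*\mu^*$ against $\nu^*$ shows that $d(\tilde T_*\mu^*)/d\nu^*(u,v)=\frac{dT_*\mu}{d\nu}(v)$ depends only on the second coordinate (because $\mu^*$ integrates fiber counts over the second variable), so the ratio $h(u,v)h(w,v)^{-1}$ cancels and $\tilde a(u,w)=a(T^{-1}u,T^{-1}w)$ on the nose --- no absorption into the coefficient is needed, and left finiteness of $\tilde a$ is immediate. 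Second, in the normalizer computation, to pass from ``$g_n(y)\bigl(h(\psi_n(y))-h(\alpha_\phi(y))\bigr)=0$ for every $h$'' to ``$\psi_n(y)=\alpha_\phi(y)$ on $\{g_n\neq 0\}$ up to a null set'' you must run the identity through a \emph{countable} separating family of functions $h$ (e.g.\ indicators of a generating algebra of Borel sets), exactly as in the paper's maximal-abelian proof; quantifying over all $h$ at once does not directly control the null sets. Third, the standard-form lemma you invoke is stated in the paper under the standing assumption that the equivalence relation is aperiodic; for the theorem as stated one should either note that the aperiodicity assumption is in force or handle the finite-class part separately (it splits off and is elementary). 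The appeal to von Neumann's point-realization theorem for the isomorphism of the diagonals is a genuine external ingredient, but a classical and appropriate one, and your observation that unitarity of $V_\phi$ forces $\bigcup_n\{g_n\neq 0\}$ to be conull (since $1_BV_\phi=0$ would contradict $1_BV_\phi V_\phi^*1_B=1_B$) is exactly the right way to close the argument.
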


This illustrates that if we want to understand the relationship between $E$ and $M(E)$ we will want to understand the nature of $L^\infty(X,\mu)$ as a subalgebra of $M(E)$.

\begin{exercise}
Assume that $\mu(X)<\infty$ and that $\mu$ is $E$-invariant (which means that $\mu$ is invariant under all elements of $[E]$). Show that $\mathbf 1_{\graph(\Delta)}\in L^2(E,\mu^*)$ and that $\tau(x)=\langle x\mathbf 1_{\graph(\Delta)},\mathbf 1_{\graph(\Delta)}\rangle$ defines a trace on $M(E)$. Conclude that when $\mu$ is non-atomic and $E$ is $\mu$-ergodic, $M(E)$ is a type $\II_1$ factor.
\end{exercise}

\begin{exercise}\label{e.gmsp}
Let $\Gamma$ be a countable discrete group and let $\sigma:\Gamma\actson (X,\mu)$ be a measure preserving a.e.\ free (!) action. Show that $L^\infty(X,\mu)\rtimes_\sigma\Gamma$ is isomorphic to $M(E_\sigma)$, and in fact that the inclusions $L^\infty(X,\mu)\subseteq L^\infty(X,\mu)\rtimes_\sigma\Gamma$ and $L^\infty(X,\mu)\subseteq M(E_\sigma)$ are isomorphic. Conclude that $L^\infty(X)$ is a Cartan subalgebra of $L^\infty(X,\mu)\rtimes_\sigma\Gamma$, and that $L^\infty(X,\mu)\rtimes_\sigma\Gamma$ is a factor when $\sigma$ is a.e.\ free and ergodic. ({\it Hint}: Consider the map $\Gamma\times X\to E$ defined by $(\gamma,x)\mapsto (x,\sigma_\gamma(x))$.)
\end{exercise}

\begin{exercise}
Call a measurable function $b:E\to\C$ {\bf right finite} if $D(x,y)^{-\frac 1 2} b(x,y)$ is left finite (where $D$ is the Radon-Nikodym derivative of $\mu_*$ w.r.t. $\mu^*$). Show that
$$
R_b(\psi)(x,y)=\sum_{z\in [x]_E} \psi(x,z)b(z,y)
$$
defines a bounded operator on $L^2(E,\mu^*)$, and that $\{R_b:b\text{ is right finite}\}$ is a $*$-algebra contained in $M(E)^\prime$.
\end{exercise}

The next exercise is somewhat harder.

\begin{exercise}
With notation as in the previous exercise, show that $M(E)'=\{R_b:b\text{ is right finite}\}^{\prime\prime}$.
\end{exercise}

\section*{Lecture 4}

\addtocounter{section}{1}
\setcounter{theorem}{0}

\begin{center}

{\it Hyperfinite von Neumann algebras, ITPFI factors, Connes' classification of hyperfinite factors and Krieger's theorems, non-classification results via descriptive set theory, and rigidity.}

\end{center}

\medskip

This lecture, the last, is dedicated to giving an overview of developments in the field of von Neumann algebras in the last 40 years. In other words, if we had a whole semester's worth of lectures, these are some of the topics that would be covered in detail.

\subsection{Hyperfinite von Neumann algebras.}

\begin{definition}
A von Neumann algebra $M$ is called
\begin{enumerate}
\item {\bf finite dimensional} if it is isomorphic to $M_{n_1}(\C)\oplus\cdots\oplus M_{n_k}(\C)$.
\item \textbf{hyperfinite} if there is an increasing sequence $(M_i)$ of finite dimensional sub-algebras of $M$ such that $\bigcup M_i$ is dense in $M$.
\end{enumerate}
\end{definition}
Hyperfiniteness is equivalent to a number of other conditions on a von Neumann algebra, among them {\it amenability} and {\it injectivity} (neither of which we define here).

The class of hyperfinite von Neumann algebras and factors is very rich, and their theory has been developed further than any other general class. A useful source of examples comes from infinite {\it tensor products} of finite von Neumann algebras.

\subsection{Finite tensor products.} Let $H_1$ and $H_2$ be (complex) Hilbert spaces. We will denote by $H_1\odot H_2$ the {\it algebraic} tensor product of $H_1$ and $H_2$. Recall that this means that we have a map $H_1\times H_2\to H_1\odot H_2: (\xi,\eta)\mapsto\xi\otimes\eta$ with the property that any bilinear map $\rho:H_1\times H_2\to E$, where $E$ is some vector space over $\C$, has the form $\rho(\xi,\eta)=\hat\rho(\xi\otimes\eta)$, for some linear map $\hat\rho:H_1\odot H_2\to E$. The elements $\xi\otimes\eta$ are called (elementary) tensors, and the tensors generate $H_1\odot H_2$.

There is a unique inner product $\langle\cdot,\cdot\rangle$ on $H_1\odot H_2$ that satisfies
$$
\langle\xi_1\otimes \eta_1,\xi_2\otimes\eta_2\rangle=\langle\xi_1,\xi_2\rangle_{H_1}\langle\eta_1,\eta_2\rangle_{H_2}.
$$
The tensor product of the Hilbert spaces $H_1$ and $H_2$ is the completion of $H_1\odot H_2$ w.r.t. the norm induced by $\langle\cdot,\cdot\rangle$. It is denoted $H_1\otimes H_2$.

Let $M_i\subseteq B(H_i)$, $i=1,2$, be von Neumann algebras. For $x\in M_1$ and $y\in M_2$ we define an operator $x\otimes y$ on $H_1\otimes H_2$ by $x\otimes y(\xi\otimes\eta)=(x\xi)\otimes(y\eta)$. It is not hard to see that each such operator is bounded. We let
$$
M_1\otimes M_2=\{x\otimes y:x\in M_1, y\in M_2\}^{\prime\prime}
$$
and call this the tensor product of $M_1$ and $M_2$. The tensor product of an arbitrary but finite number of Hilbert spaces and von Neumann algebras are defined similarly.

\begin{exercise}\label{e.tensor1}
Show that $H\simeq H\otimes \C$ for any Hilbert space $\C$. More generally, if $H_1$ and $H_2$ are Hilbert spaces and $\xi\in H_2$ is a unit vector, then $\eta\mapsto \eta\otimes\xi$ is an embedding of $H_1$ into $H_1\otimes H_2$.
\end{exercise}

\begin{exercise}
Show that $M_n(\C)\otimes M_m(\C)\simeq M_{nm}(\C)$.
\end{exercise}

\subsection{Infinite tensor products} Let $(H_i)_{i\in\N}$ be a sequence of Hilbert spaces and for each $i\in\N$ let $\xi_i\in H_i$ be a unit vector. (We will call a pair $(H,\xi)$ where $\xi$ is a unit vector in the Hilbert space $H$ a \emph{pointed} Hilbert space. The vector $\xi\in H$ will be called a \emph{base point}.) For each $n\in\N$ we have by exercise \ref{e.tensor1} that $\bigotimes_{i=1}^n H_i$ can be identified naturally with the subspace
$$
\{\eta_1\otimes\cdots\otimes \eta_n\otimes\xi_{n+1}:(\forall i\leq n) \eta_i\in H_i\}
$$
of $\bigotimes_{i=1}^{n+1} H_i$. Let $H_\infty$ be the inductive limit of the system
$$
H_1\overset{\eta\mapsto \eta\otimes \xi_2}{\hookrightarrow} H_1\otimes H_2\overset{\eta\mapsto \eta\otimes\xi_3}\hookrightarrow H_1\otimes H_2\otimes H_3\hookrightarrow\cdots
$$
which is an inner product space in the natural way; we let 
$$
\bigotimes_{i=1}^n H_i\to \bigotimes_{i=1}^\infty(H_i,\xi_i): \eta\mapsto\eta\otimes\xi_{n+1}\otimes\xi_{n+2}\otimes\cdots
$$
denote the canonical embedding. It is clear that $\bigotimes_{i=1}^\infty(H_i,\xi_i)$ is generated by the ``elementary tensors'' $\eta_1\otimes\eta_2\otimes\cdots$, where $\eta_i\in H_i$, and where $\eta_i=\xi_i$ eventually. We define the {\bf infinite tensor product of the pointed Hilbert spaces} $(H_i,\xi_i)_{i\in\N}$ to be the completion of $H_\infty$. It is denoted $\bigotimes_{i=1}^\infty (H_i,\xi_i)$.

Now let $(M_i)_{i\in\N} $ be a von Neumann algebra acting on $H_i$, for each $i\in\N$. If $x_i\in M_i$, $i\in\N$, is a sequence such that $x_i=I$ eventually, then a bounded operator $\bigotimes_{i\in\N} x_i$ on $\bigotimes_{i=1}^\infty(H_i,\xi_i)$ is defined by requiring that
$$
(\bigotimes_{i\in\N} x_i)(\eta_1\otimes\eta_2\otimes\cdots)=x_1\eta_1\otimes x_2\eta_2\otimes\cdots.
$$
We define {\bf infinite tensor product of the von Neumann algebras $(M_i)_{i\in\N}$} w.r.t. the vectors $\xi_i$ as
$$
\bigotimes_{i=1}^\infty (M_i,\xi_i)=\{\bigotimes_{i\in\N} x_i: x_i\in M_i\text{ and } x_i=I \text{ eventually}\}^{\prime\prime}.
$$
{\it Warning}: As we will see below, the isomorphism type of the infinite tensor product is \emph{highly} sensitive to the asymptotic behavior of the sequence $(\xi_i)_i$.

\begin{examples}\ 
\begin{enumerate}
\item Let $(X,\mu)$ and $(Y,\nu)$ be standard probability spaces. It is well-known that $L^2(X,\mu)\otimes L^2(Y,\nu)\simeq L^2(X\times Y,\mu\times\nu)$, and that the isomorphism is given by $f\otimes g\mapsto fg$. It follows that
$$
L^\infty(X,\mu)\otimes L^\infty(Y,\nu)\simeq L^\infty(X\times Y,\mu\times\nu),
$$
where we represent each $L^\infty$ as multiplication operators on the corresponding $L^2$ space. This is easily generalized: If $(X_i,\mu_i)$ are standard probability spaces and we let $\xi_i=1$ be the constant 1 function in $L^2(X_i,\mu_i)$, then the tensor product $\bigotimes (L^2(X_i,\mu_i),\xi_i)$ is isomorphic to $L^2(\prod X_i,\prod\mu_i)$, and $\bigotimes (L^\infty(X_i,\mu_i),\xi_i)$ is isomorphic to $L^\infty(\prod X_i,\prod\mu_i)$.

Now let $\xi\in L^2(Y,\nu)$ be a unit vector (not necessarily 1.) An isometric embedding of $L^2(X,\mu)$ into $L^2(X\times Y,\mu\times\nu)$ is given by $f\mapsto f\xi$, and this corresponds to the embedding $f\mapsto f\otimes\xi$ of $L^2(X,\mu)$ into $L^2(X,\mu)\otimes L^2(Y,\nu)$. Consider also the space $L^2(X\times Y,\mu\times|\xi|^2\nu)$, and the corresponding embedding $f\mapsto f1$, where $1$ is the constant 1 function in $L^2(Y,|\xi^2|\nu)$. It is easily verified that the map
$$
L^2(X\times Y,\mu\times\nu)\to L^2(X\times Y,\mu\times|\xi|^2\nu): h\mapsto \frac h \xi
$$
is an isometry which conjugates the two embeddings of $L^2(X,\mu)$ described above.

Now let $(X_i,\mu_i)_{i\in\N}$ be a sequence of standard probability spaces, and let $\xi_i\in L^2(X_i,\mu_i)$ be a unit vector for each $i\in\N$. Our observations above now show that the tensor product $\bigotimes (L^2(X_i,\mu_i),\xi_i)$ is isomorphic to $L^2(\prod X_i,\prod|\xi_i|^2\mu_i)$ and that $\bigotimes (L^\infty(X_i,\mu_i),\xi_i)$ is isomorphic to $L^\infty(\prod X_i,\prod|\xi_i^2|\mu_i)$. It is of course well-known that the product of infinitely many probability measures is highly sensitive to the asymptotic behavior of the sequence of measures, and so this example indicates that the infinite tensor product of von Neumann algebras also is highly sensitive in this way, for much the same reasons.

\medskip

\item Let $n_i\in\N$ be an infinite sequence of natural numbers. The easiest way to represent the matrix algebra $M_{n_i}(\C)$ is to have it act on itself by matrix multiplication on the left. (If nothing else, this manoeuvre allows us to avoid introducing separate notation for the Hilbert spaces we act on.) Let $\tau:M_{n_i}(\C)\to\C$ be the normalized trace, and let $\langle x,y\rangle_\tau=\tau(y^*x)$ be the associated inner product, which makes $M_{n_i}(\C)$ into a Hilbert space. Let $\xi_i\in M_{n_i}(\C)$ be a unit vector. There is no loss in assuming that $\xi_i$ is a diagonal matrix, $\xi_i=\diag(\lambda_{i,1},\ldots,\lambda_{i,n_i})$. We can now form the infinite tensor product of the matrix algebras $M_{n_i}(\C)$ w.r.t. the vectors $\xi_i$ as described above. It turns out that it is a factor. (It can be shown that in general the tensor product (finite or infinite) of factors is again a factor.) A factor of this form is called an {\bf ITPFI factor}, which stands for \emph{infinite tensor product of factors of type $\I$}. It is clear that ITPFI factors are hyperfinite. The sequence $(\lambda^2_{i,j})$ are called the \emph{eigenvalue sequence} for the ITPFI factor. Those ITPFI factors where for some $n\in\N$ we have $n_i=n$ for all $i\in\N$ are called ITPFI$_n$ factors.

In much the same way as above, we can carry out an analysis of the nature of the inclusions of $M_n(\C)$ into $M_n(\C)\otimes M_m(\C)$ and how it depends on a choice of $\xi\in M_m(\C)$. As we will see below, the isomorphism type of an ITPFI factor depends on the eigenvalue list in a somewhat similar way to how the infinite product of finite measure spaces depends on the measure of the atoms.

\item Certain ITPFI$_2$ factors deserve special mention for their importance in the field. Pride of place goes to the {\bf hyperfinite $\II_1$ factor}. This factor arises from taking $\xi_i=(\frac 1 {\sqrt 2},\frac 1 {\sqrt 2})$ for all $i\in\N$. The hyperfinite $\II_1$ factor is perhaps the most studied and most important of all $\II_1$ factors. It is usually denoted $\mathcal R$ (or $\mathcal R_1$). It is an almost canonical presence, showing up virtually everywhere in the field.

Another important class of ITPFI$_2$ factors are the {\bf Powers factors}. Let $0<\lambda<1$, and let $\xi_i=\left((\frac \lambda {1+\lambda})^{\frac 1 2},(1-\frac \lambda {1+\lambda})^{\frac 1 2}\right)$ for all $i\in\N$. The resulting ITPFI$_2$ factor is denoted $\mathcal R_\lambda$. In 1967, it was shown by Powers in \cite{pow67} that the family $(\mathcal R_\lambda)_{\lambda\in (0,1)}$ constitutes a family of mutually non-isomorphic factors. Previous to that result, it had not been known if there were uncountably many non-isomorphic factors!
\end{enumerate}

A far-reaching study of ITPFI factors was undertaken by Araki and Woods in the paper \cite{arwo}, where they introduced a number of invariants and produced new classes of uncountably many non-isomorphic factors.

\subsection{Classification} 

The towering achievement of von Neumann algebra theory from the 1970s is Connes' complete classification of hyperfinite (or, more correctly, injective) factors, \cite{connes76}. The classification divides type $\III$ into subcases type $\III_\lambda$, $\lambda\in [0,1]$. Connes showed that there is a unique injective (hyperfinite) factor in each of the classes $\II_1$, $\II_\infty$ and $\III_\lambda$, $0<\lambda<1$. In the case $\III_0$, there are many non-isomorphic factors, but they are classified completely by an associated invariant called the \emph{flow of weights} (see e.g. \cite{connes94}). The remaining case, the type $\III_1$ case, proved to be a difficult problem in its own right. It was eventually solved by Haagerup some years later in \cite{haag}: Haagerup showed that there is a unique injective factor of type $\III_1$.

Another phenomenal achievement of the period are the results of Krieger in \cite{krieger76}. Krieger showed that every hyperfinite factor is of the form $M(E)$ for some \emph{amenable} non-singular ergodic countable Borel equivalence relation $E$. (Here we use the language of the Feldman-Moore construction (from Lecture 3) to state Krieger's results, though the work of Feldman and Moore postdates Krieger's work.) Krieger moreover showed that for \emph{amenable} ergodic non-singular countable Borel equivalence relations $E$ and $F$, $M(E)$ is isomorphic to $M(F)$ if and only if $E$ and $F$ are orbit equivalent.
\end{examples}

\begin{exercise}
Let $\Z\actson 2^\N$ be the odometer action (i.e., adding one with carry modulo 2), and let $E_0$ denote the induced equivalence relation. Let $\alpha_i\in (0,1)$ for all $i\in\N$, and let $\lambda_{i,0}=\alpha_j$ and $\lambda_{i,1}=1-\alpha_j$. Give $2^\N$ the product measure $\prod_{i=1}^\N (\lambda_{i,0}\delta_0+\lambda_{i,1}\delta_1)$, where $\delta_i$ is the Dirac measure concentrating on $i\in\{0,1\}$. Show that $E_0$ is measure class preserving, and that $M(E_0)$ is isomorphic to the ITPFI$_2$ with eigenvalue list $(\lambda_{i,j})$. In particular, if we take $\alpha_i=\frac 1 2$ for all $i\in\N$, so that $E_0$ is measure preserving, we get the hyperfinite $\II_1$ factor $\mathcal R$. (This exercise may be quite hard.)
\end{exercise}

\subsection{Non-classification} The invariant provided by Connes' classification is a certain non-singular $\R$-action (flow), which is considered up to conjugacy. This is hardly a very simple invariant. It is therefore natural to ask: How difficult is it to classify factors up to isomorphism? It is of course well-known among set theorists that such questions can be fruitfully attacked through the concept of Borel reducibility.\footnote{We will assume that the reader is familiar with the notion of Borel reducibility, and related concepts like smooth, non-smooth, classification by countable structures, etc. We refer to \cite{sato09b} for a brief overview of these notions.} We will end these lectures by giving an overview of what is known about the classification problem for factors from this point of view. The reader can find a more detailed survey of these results in \cite{sato09b}.

Before going on, we remark that the problem of classifying von Neumann algebras and factors acting on a separable Hilbert space fits nicely into the framework of descriptive set theory. Namely, a von Neumann algebra $N\subseteq B(H)$ can be identified with $N\cap B^1(H)$. The unit ball of $B(H)$ is a compact Polish space in the weak topology, and so the space $K(B^1(H))$ of compact subsets of $B^1(H)$ is itself a Polish space. One can now show that the set $\vN(H)=\{N\cap B^1(H)\in K(B^1(H)): N\text{ is a von Neumann algebra}\}$ is a Borel set. We think of $\vN(H)$ as the {\bf standard Borel space of separably acting von Neumann algebras}. The subset $\mathcal F(H)\subseteq\vN(H)$ of factors turns out to be Borel. (There are other ways of arriving at the space $\vN(H)$, and even a natural choice of Polish topology on it; see \cite{hawi98,hawi00} for an exhaustive study. The space $\vN(H)$ was introduced and studied by Effros in the papers \cite{effros65, effros66}, who also proved, among many other things, that $\mathcal F(H)$ is Borel.)

\medskip

The first known non-classification result in the area is due to Woods:

\begin{theorem}[Woods, \cite{woods}]
The classification of hyperfinite factors is not smooth. More precisely, there is a Borel reduction of $E_0$ to the isomorphism relation of ITPFI$_2$ factors (of type $\III_0$).
\end{theorem}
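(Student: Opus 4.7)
The plan is to produce a Borel map $R\colon 2^\N \to \vN(H)$, $x \mapsto R_x$, taking values in the ITPFI$_2$ factors of type $\III_0$, such that $R_x \simeq R_y$ if and only if $x \mathrel{E_0} y$. The candidate construction is the obvious one: fix two distinct numbers $\alpha,\beta \in (0,\tfrac12)$, to be chosen carefully below, and for each $x\in 2^\N$ set
\[
\xi_n^x = \begin{cases} (\sqrt{\alpha},\sqrt{1-\alpha}) & \text{if } x(n)=0,\\ (\sqrt{\beta},\sqrt{1-\beta}) & \text{if } x(n)=1,\end{cases}
\]
and let $R_x = \bigotimes_{n=1}^\infty (M_2(\C),\xi_n^x)$, realized on a fixed separable $H$ via a Borel choice of identifications of the varying infinite tensor product spaces with $H$.

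I would first verify Borelness: the generating operators $\bigotimes_n T_n$ (with $T_n\in M_2(\C)$ equal to $I$ eventually) depend continuously on $x\in 2^\N$ in the weak topology on $B^1(H)$, and the map sending a countable subset of $B^1(H)$ to the von Neumann algebra it generates is Borel into $\vN(H)$. Hence $x\mapsto R_x$ is Borel.

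For the easy direction, if $x \mathrel{E_0} y$ choose $N$ so that $x(n)=y(n)$ for $n>N$. By associativity of the tensor product,
\[
R_x \simeq M_2(\C)^{\otimes N} \otimes \bigotimes_{n>N}(M_2(\C),\xi_n^x),
\]
and similarly for $R_y$; since the tail tensor products on the right-hand sides are literally equal and $M_2(\C)^{\otimes N}\simeq M_{2^N}(\C)$, we obtain $R_x\simeq R_y$.

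The main obstacle is the reverse direction, and where essentially all of the work lies. Here one must choose $\alpha$ and $\beta$ so that the isomorphism invariants of an ITPFI$_2$ factor recover the $E_0$-class of $x$. The natural invariants to use are those of Araki--Woods: the asymptotic ratio set $r_\infty$ and, more finely, the flow of weights classifying type $\III_0$ ITPFI factors in Connes' theorem. Concretely, I would pick $\alpha,\beta$ so that $\log(\alpha/(1-\alpha))$ and $\log(\beta/(1-\beta))$ are rationally independent, so that $r_\infty(R_x)=\{0\}$ and $R_x$ is genuinely type $\III_0$, with a nontrivial flow of weights. Using the exercise identifying ITPFI$_2$ factors with $M(E_0,\mu_x)$ for the dyadic odometer endowed with the product measure $\mu_x$ having marginals $\alpha\delta_0+(1-\alpha)\delta_1$ or $\beta\delta_0+(1-\beta)\delta_1$ at coordinate $n$ according to $x(n)$, Krieger's theorem reduces the question to showing that the non-singular systems $(2^\N,\mu_x,E_0)$ and $(2^\N,\mu_y,E_0)$ are orbit equivalent only when $x \mathrel{E_0} y$. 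This follows by computing the Radon--Nikodym cocycle and its associated flow of weights: its essential range encodes the asymptotic distribution of $0$s and $1$s in $x$ in a manner strong enough to recover $x$ modulo $E_0$. This measure-theoretic computation, delicate but standard in the Araki--Woods--Krieger framework, is the crux of the argument. Combining the three steps yields the desired Borel reduction of $E_0$ into $\simeq$ on the ITPFI$_2$ factors.
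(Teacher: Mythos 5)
The paper states this theorem without proof, citing Woods, so your sketch can only be measured against Woods' actual argument --- and there is a genuine, fatal gap in it. An infinite tensor product is invariant under permutations of its tensor factors: for any bijection $\pi$ of $\N$, the map $\eta_1\otimes\eta_2\otimes\cdots\mapsto\eta_{\pi^{-1}(1)}\otimes\eta_{\pi^{-1}(2)}\otimes\cdots$ is isometric on elementary tensors (all but finitely many inner-product factors equal $1$) and extends to a unitary carrying $\bigotimes_n(M_2(\C),\xi_n)$ onto $\bigotimes_n(M_2(\C),\xi_{\pi^{-1}(n)})$ spatially. Hence your $R_x$ depends only on the \emph{multiset} of base points, i.e., only on how many $n$ have $x(n)=0$ and how many have $x(n)=1$. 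Any two $x,y$ with infinitely many $0$'s and infinitely many $1$'s are coordinate rearrangements of each other, so $R_x\simeq R_y$ for all such $x,y$; since this set of sequences contains uncountably many $E_0$-classes, your map collapses them to a single isomorphism class and cannot be a reduction of $E_0$. The same collapse is visible on the measure side, where you hoped the crux computation would live: a permutation of coordinates is a nonsingular isomorphism $(2^\N,\mu_x)\to(2^\N,\mu_y)$ that preserves the finite-difference relation $E_0$, so the systems are outright isomorphic, and no isomorphism invariant --- ratio set, Radon--Nikodym cocycle, flow of weights --- can ``recover $x$ modulo $E_0$'' from them.

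Your type computation is also backwards, and this points to what a correct construction must do. With $\lambda_n\in\{\alpha,\beta\}$ bounded away from $0$, each recurring ratio $\alpha/(1-\alpha)$, $\beta/(1-\beta)$ lies in $r_\infty(R_x)$, and $r_\infty\cap(0,\infty)$ is a closed multiplicative subgroup of $\R_+$; rational independence of the logarithms therefore forces $r_\infty=[0,\infty)$, making every such $R_x$ the \emph{unique} Araki--Woods factor of type $\III_1$ (rational dependence instead gives a fixed Powers-type $\III_\lambda$). One never obtains type $\III_0$, whose criterion is $r_\infty=\{0,1\}$, not $\{0\}$ as you wrote; $\III_0$ ITPFI$_2$ factors require an eigenvalue sequence genuinely varying with $n$, e.g.\ $\lambda_n\to 0$ along blocks of rapidly growing lengths subject to divergence conditions. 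This is precisely how Woods' proof in \cite{woods} proceeds: the bit $x(k)$ is encoded by the choice of eigenvalue on the $k$-th block, with block lengths growing so fast that the multiset of eigenvalues --- the only permutation-invariant data --- still determines the tail class of $x$; the hard part, which your sketch waves off as a ``standard'' computation, is the Araki--Woods analysis showing that isomorphism of the resulting $\III_0$ factors forces tail equivalence. Your Borelness step and the easy direction (agreement of tails plus absorption of a finite matrix factor gives $R_x\simeq R_y$ when $x\mathrel{E_0}y$) are fine, but they are the trivial part of the theorem.
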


This seems to have been the only result of its kind that was known until a few years ago, when the following was proven:

\begin{theorem}[Sasyk-T\"ornquist, \cite{sato09a}]\label{t.sato1}
Let $\Lambda$ be the class of separably acting factors of type $\II_1$, $\II_\infty$, or $\III_\lambda$, $0\leq \lambda\leq 1$. Then the isomorphism relation in $\Lambda$ does not admit classification by countable structures.
\end{theorem}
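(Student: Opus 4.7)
The plan is to apply Hjorth's theory of turbulent Polish group actions. For each type $\tau$ in the list $\{\II_1, \II_\infty, \III_\lambda : 0\leq\lambda\leq 1\}$ I would exhibit a turbulent Polish group action $G_\tau\actson Y_\tau$ together with a Borel map $\Phi_\tau:Y_\tau\to \mathcal F(H)$ such that (i) $\Phi_\tau(y)$ is a factor of type $\tau$ for every $y\in Y_\tau$, and (ii) whenever $\Phi_\tau(y)\simeq\Phi_\tau(y')$ the parameters $y,y'$ lie in the same $G_\tau$-orbit (possibly modulo a smooth refinement). Hjorth's theorem then guarantees that the orbit equivalence relation of the turbulent action cannot be Borel reduced to any isomorphism relation on a standard Borel space of countable structures, whence the isomorphism relation on factors of type $\tau$ does not admit classification by countable structures either.

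For the $\II_1$ case I would use the group-measure space construction from Lecture 3. Fix a countable discrete group $\Gamma$ with strong rigidity properties (a Kazhdan group such as $\mathrm{SL}_3(\Z)$, or a non-amenable group to which Popa's deformation/rigidity applies), and a distinguished free ergodic p.m.p.\ action $\sigma_0:\Gamma\actson(X,\mu)$. Let $Y_{\II_1}$ be a Polish space of $1$-cocycles for $\sigma_0$ taking values in an abelian Polish group $G_{\II_1}$, and let $G_{\II_1}$ act on $Y_{\II_1}$ by coboundaries; standard local-density verification along the lines of Kechris-Sofronidis and Hjorth shows that this action is turbulent. The resulting perturbed actions $\sigma_c$ give group-measure space factors $\Phi_{\II_1}(c)=L^\infty(X,\mu)\rtimes_{\sigma_c}\Gamma$, and Popa-type W$^*$-superrigidity is the tool that promotes an isomorphism of these factors into a cocycle conjugacy, supplying (i) and (ii).

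The remaining types reduce to the $\II_1$ case or are handled by parallel constructions. For $\II_\infty$ one composes $\Phi_{\II_1}$ with the amplification $M\mapsto M\otimes B(H)$; since a $\II_1$ factor is determined by its $\II_\infty$ amplification, the reduction transfers. For $\III_\lambda$ with $0<\lambda<1$ one tensors instead with the Powers factor $\mathcal R_\lambda$ and invokes Connes' tensor-splitting theorems for injective type $\III_\lambda$ factors to recover the $\II_1$ piece up to isomorphism. For $\III_0$ one uses Krieger's theorem from Lecture 4, which translates the isomorphism problem for hyperfinite $\III_0$ factors into the conjugacy problem for aperiodic properly ergodic non-transitive measure-preserving flows of $\R$ (the flow of weights); a direct turbulent construction on such flows, fed through Krieger, does the job. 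The $\III_1$ case is the most delicate: here I would build an explicit Borel family of group-measure space $\III_1$ factors arising from suitably generic type $\III_1$ Maharam extensions of rigid actions, and once more exploit Popa-type rigidity.

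The principal obstacle throughout is the rigidity implication (ii). For the type $\II$ cases this is where Popa's deformation/rigidity machinery is indispensable, and one must choose the parameter space carefully so that the needed superrigidity hypotheses are satisfied. For the type $\III$ cases the difficulty is absorbed into Connes' classification of injective factors and Krieger's analysis of the flow of weights, but in the $\III_1$ situation additional work is required because the flow of weights degenerates to a point. Once (ii) is secured and turbulence of each parameter action is verified by the standard somewhere-density-plus-density-of-orbits criterion, a uniform invocation of Hjorth's turbulence theorem completes the proof.
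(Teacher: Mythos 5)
You should first be aware that these lecture notes do not actually prove Theorem \ref{t.sato1}: it is quoted as a black box from \cite{sato09a}, and the only methodological indication given is that the proof rests on Hjorth's turbulence theory together with Popa's deformation/rigidity analysis of group-measure space factors. Measured against that (and against the cited paper's actual strategy), your outline is right in broad strokes: a turbulent parameter space mapped Borel-measurably into $\mathcal F(H)$, Popa-type rigidity for the hard implication in the $\II$ cases, and Krieger's flow-of-weights translation for hyperfinite $\III_0$ — the latter being exactly how the notes describe the $\III_0$ case of \cite{sato09a} in Lecture 4. Two corrections to your $\II_1$ scheme, though. First, W*-superrigidity in the literal sense (isomorphism of group-measure space factors implies conjugacy of the actions) postdates \cite{sato09a} and is not what is used; Popa's strong rigidity and cocycle superrigidity for (quotients of) malleable actions of w-rigid groups suffice. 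Second, your condition (ii) is both stronger than needed and stronger than what such rigidity delivers: for Hjorth's theorem it is enough that the pullback under $\Phi_\tau$ of the isomorphism relation contain the turbulent orbit equivalence relation and have meager classes, since turbulence gives generic ergodicity with respect to every $S_\infty$-orbit equivalence relation; demanding that isomorphic images come from the same orbit is the wrong target.

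The genuine gaps are your two transfer steps. For $\II_\infty$, it is simply false that a $\II_1$ factor is determined by its amplification: $M\otimes B(H)\simeq N\otimes B(H)$ only yields $N\simeq M^t$ for some $t>0$ (stable isomorphism — this is the fundamental group obstruction), so the reduction transfers only if the rigidity theorem for your family controls all amplifications, which must be argued, not assumed. The $\III_\lambda$ step is worse: there is no ``tensor-splitting'' cancellation of the kind you invoke. Connes' theorem \cite{connes76} classifies \emph{injective} $\III_\lambda$ factors and says nothing about $M\otimes\mathcal R_\lambda$ for non-injective $M$. In fact, since $\mathcal R\otimes\mathcal R_\lambda\simeq\mathcal R_\lambda$ (uniqueness of the injective $\III_\lambda$ factor), one has $M\otimes\mathcal R_\lambda\simeq(M\otimes\mathcal R)\otimes\mathcal R_\lambda$ for \emph{every} $M$, whereas $M\not\simeq M\otimes\mathcal R$ for precisely the rigid, non-McDuff factors your construction produces; so $M\mapsto M\otimes\mathcal R_\lambda$ identifies each $M$ with its McDuff-ification and cannot be injective on isomorphism classes. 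What an isomorphism $M\otimes\mathcal R_\lambda\simeq N\otimes\mathcal R_\lambda$ does give, via the canonical core, is $M\otimes\mathcal R\otimes B(H)\simeq N\otimes\mathcal R\otimes B(H)$, and the rigidity statement you must then prove is that the parameter survives tensoring with $\mathcal R$ and amplification — e.g., by intertwining arguments that push a relatively rigid subalgebra off the hyperfinite tensor factor. Supplying exactly this robustness (which also handles $\III_1$ by tensoring with the injective $\III_1$ factor, with no need for the Maharam-extension machinery you propose) is the real content of the type $\III_\lambda$ and $\II_\infty$ cases in \cite{sato09a}; without it those cases of your argument do not go through.
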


Prior to this, it was apparently not even known if the isomorphism relation was non-smooth in any of these classes, except the type $\III_0$ case where it follows from Woods' theorem.\footnote{For a long time it was not even known if there were infinitely many non-isomorphic $\II_1$ factors. This problem was solved by McDuff in \cite{mcduff}.} In the same paper, the following was also shown:

\begin{theorem}[Sasyk-T\"ornquist, \cite{sato09a}]\label{t.sato2}
Isomorphism of countable graphs is Borel reducible to isomorphism of $\II_1$ factors.
\end{theorem}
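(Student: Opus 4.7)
The plan is to construct a Borel map $G \mapsto M_G$ from the standard Borel space of countable graphs (identified with symmetric, irreflexive subsets of $\N \times \N$) into the standard Borel space $\mathcal F(H)$ of separably acting $\II_1$ factors, with the property that $G \cong G'$ if and only if $M_G \cong M_{G'}$. The natural vehicle is the group-measure space construction from Lecture 3, combined with a W*-superrigidity theorem of Popa type that ensures the factor remembers the action.

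First I would fix a countable discrete ICC group $\Gamma$ enjoying strong rigidity for its generalized Bernoulli actions (a product group of the form $\Gamma = \Gamma_0 \times \Gamma_1$ where $\Gamma_0$ is a non-amenable group with the relative property (T) over an infinite subgroup, and $\Gamma_1$ is infinite ICC, is a standard choice). For a graph $G$ on vertex set $\N$, I would build a countable $\Gamma$-set $I_G$ whose combinatorial structure encodes $G$ (for instance, $I_G = \Gamma \times \N$ decorated by $G$ via a well-chosen $\Gamma$-equivariant structure on the edge set, or alternatively a coset space construction from a subgroup depending Borel-ly on $G$), and then form the generalized Bernoulli action $\sigma_G : \Gamma \actson (X_G, \mu_G) = ([0,1], \mathrm{Leb})^{I_G}$. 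By design, $G \cong G'$ should imply that $I_G \cong I_{G'}$ as $\Gamma$-sets and hence that $\sigma_G$ and $\sigma_{G'}$ are conjugate; conversely, the combinatorial invariants of $I_G$ extracted from the action should recover $G$ up to isomorphism. Set $M_G = L^\infty(X_G, \mu_G) \rtimes_{\sigma_G} \Gamma$; since $\sigma_G$ is a free ergodic pmp action of an ICC group, $M_G$ is a $\II_1$ factor.

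The key step is to show that $M_G \cong M_{G'}$ forces $G \cong G'$. For this I would invoke W*-superrigidity for such generalized Bernoulli actions: under the hypotheses on $\Gamma$, any $*$-isomorphism $M_G \to M_{G'}$ is implemented (up to unitary conjugation, a character twist, and a scalar) by a conjugacy of the underlying actions $\sigma_G$ and $\sigma_{G'}$. Combined with the first step, conjugacy of $\sigma_G$ and $\sigma_{G'}$ gives $I_G \cong I_{G'}$ as $\Gamma$-sets, from which one reads off $G \cong G'$. In the opposite direction, an isomorphism $G \cong G'$ induces a $\Gamma$-equivariant bijection $I_G \to I_{G'}$, hence a conjugacy of the generalized Bernoulli actions, hence an isomorphism of the crossed products.

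Finally I would verify that $G \mapsto M_G$ is Borel as a map into $\mathcal F(H)$. After choosing, uniformly in $G$, isomorphisms $L^2(\Gamma \times X_G, \nu \times \mu_G) \to H$ for a fixed separable Hilbert space $H$, the generating operators $U_\gamma$ and $m_f$ depend Borel-ly on $G$, and hence so does the weakly closed subalgebra they generate; this uses the standard fact that the map sending a countable family of operators in $B^1(H)$ to its weak closure in $K(B^1(H))$ is Borel. The hard part, and really the substance of the theorem, is the invocation of W*-superrigidity: one needs both that the group $\Gamma$ and the actions $\sigma_G$ fall into the hypotheses of an available superrigidity theorem, and that the $\Gamma$-set $I_G$ can be engineered in a Borel way so that graph isomorphism matches $\Gamma$-set isomorphism exactly. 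Once these pieces are in place, composing with the forgetful reduction and the superrigidity-based inverse gives the desired Borel reduction of graph isomorphism to isomorphism of $\II_1$ factors.
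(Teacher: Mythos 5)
Your overall architecture---a Borel map $G\mapsto M_G$ via the group--measure space construction, a Popa-type rigidity theorem to recover the action from the factor, and an Effros-Borel-space argument for measurability---is the right shape, and it is the shape of the actual Sasyk--T\"ornquist argument (these notes contain no proof of the theorem; they only cite \cite{sato09a} and remark that Popa's rigidity theory is what makes it possible). But your key encoding step is not merely left as engineering: it is provably impossible as stated. You fix a single group $\Gamma$ and propose to build a countable $\Gamma$-set $I_G$ so that ``graph isomorphism matches $\Gamma$-set isomorphism exactly.'' A countable $\Gamma$-set is determined up to equivariant bijection by the multiset of conjugacy classes of its point stabilizers, with multiplicities in $\{0,1,\ldots,\omega\}$, and a routine Lusin--Novikov argument then shows that isomorphism of countable $\Gamma$-sets, for a fixed countable $\Gamma$, is a \emph{Borel} equivalence relation. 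Isomorphism of countable graphs is analytic and not Borel (indeed, the notes deduce from the very theorem under discussion that isomorphism of $\II_1$ factors is not Borel), so no Borel map $G\mapsto I_G$ can satisfy $G\cong G'\iff I_G\cong I_{G'}$. The situation at the level of actions is worse still: conjugacy of generalized Bernoulli actions of the fixed $\Gamma$ is even coarser than $\Gamma$-set isomorphism (for instance $\left([0,1]^2,\lambda\times\lambda\right)^{\Gamma}$ and $\left([0,1],\lambda\right)^{\Gamma}$ are conjugate although $\Gamma\sqcup\Gamma\not\cong\Gamma$ as $\Gamma$-sets), and any ``decoration'' of $I_G$ by the edge relation that is not reflected in the bare $\Gamma$-set is invisible to $\sigma_G$ and hence to $M_G$. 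So for every encoding of the kind you describe, the essential direction $M_G\cong M_{G'}\Rightarrow G\cong G'$ must fail.

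The repair, and what \cite{sato09a} in essence does, is to vary the \emph{group} with the graph rather than the $\Gamma$-set: one first Borel-reduces isomorphism of countable graphs to isomorphism within a class of ICC w-rigid groups $\Gamma_G$ (w-rigidity---possession of an infinite normal subgroup with the relative property (T)---survives passage to overgroups in which that subgroup stays normal, so suitable products or extensions of $\Z^2\rtimes SL_2(\Z)$ by ICC groups encoding $G$ qualify), and then forms the plain Bernoulli action $\sigma_G:\Gamma_G\actson ([0,1],\lambda)^{\Gamma_G}$ and $M_G=L^\infty\rtimes_{\sigma_G}\Gamma_G$. Popa's strong rigidity theorem for Bernoulli actions of w-rigid ICC groups then recovers $\Gamma_G$ (and the action) from the isomorphism class of $M_G$, whence the graph. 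Note also that the clean ``W*-superrigidity'' you invoke---every isomorphism of the crossed products is implemented by a conjugacy of the actions---was not available in 2009, and the rigidity theorems that do exist for (generalized) Bernoulli actions have asymmetric hypotheses (malleability and mixing on one side, w-rigidity on the other) together with conditions on the stabilizers of the $\Gamma$-set; Popa's strong rigidity in the form above suffices and is what the original proof uses. Your final paragraph on Borelness of $G\mapsto M_G$ is fine and matches the standard argument; the defect is concentrated in the fixed-group encoding and in the too-optimistic form of superrigidity.
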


The same construction gives the result for type $\II_\infty$ as well, but the proof falls short of handling the type $\III$ cases. It follows from the above that isomorphism of factors (of type $\II_1$ and $\II_\infty$) is analytic, and not Borel in the space $\vN(H)$.

The proofs of theorems \ref{t.sato1} and \ref{t.sato2} only became possible due to enormous advances in the understanding of the relation between measure preserving countable Borel equivalence relations and their corresponding von Neumann algebra. These advances were spearheaded by Sorin Popa, as well as a number of his collaborators, who in the last two decades have developed a large number of techniques and results that make it possible to analyze group-measure space factors in the presence of certain ``rigidity'' properties. We refer to \cite{popa,vaes} for an overview of these developments.

None of the factors that were constructed to prove theorems \ref{t.sato1} and \ref{t.sato2} are hyperfinite. So what about the classification of hyperfinite type $\III_0$ factors? Can they be classified by countable structures? The answer is again no. This was already shown in \cite{sato09a}, by showing that a standard construction of an injective type $\III_0$ factor with a prescribed flow of weights is a Borel construction. Subsequently, this result was improved to show the following non-classification result for ITPFI$_2$ factors:

\begin{theorem}[Sasyk-T\"ornquist, \cite{sato10}]
The isomorphism relation for ITPFI$_2$ factors is not classifiable by countable structures.
\end{theorem}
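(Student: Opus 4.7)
The plan is to combine the Feldman--Moore/Krieger correspondence with Hjorth's turbulence theorem. First I would parametrize ITPFI$_2$ factors: let $X\subseteq (0,1)^{\N}$ be the Borel set of sequences $\alpha=(\alpha_i)$ giving rise to a type $\III_0$ ITPFI$_2$ factor (cutting out $\III_\lambda$ for $\lambda\in[0,1]\setminus\{0\}$ is Borel, e.g.\ via the Araki--Woods asymptotic ratio set), and send $\alpha$ to the ITPFI$_2$ factor $M_\alpha$ with eigenvalue list $(\alpha_i,1-\alpha_i)$. The map $\alpha\mapsto M_\alpha$ is Borel into $\vN(H)$. By the exercise stated just before the theorem, $M_\alpha\simeq M(E_0,\mu_\alpha)$ where $E_0$ is the odometer tail equivalence relation on $2^{\N}$ and $\mu_\alpha=\prod_i(\alpha_i\delta_0+(1-\alpha_i)\delta_1)$. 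Krieger's theorem then identifies isomorphism of $M_\alpha$ and $M_\beta$ with orbit equivalence of the non-singular systems $(E_0,\mu_\alpha)$ and $(E_0,\mu_\beta)$, so it suffices to show that this orbit-equivalence relation on $X$ is not classifiable by countable structures.

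Next I would set up a Polish group action whose orbits sit inside isomorphism classes. After reparametrizing $\alpha_i=(1+e^{-s_i})^{-1}$, the group $G=\ell^2(\N)$ acts continuously by coordinatewise translation on a Borel $G$-invariant subspace $X_0$ of parameter sequences. By Kakutani's dichotomy for infinite products, translation by $t\in\ell^2$ sends $\mu_\alpha$ to a measure in the same measure class, so the identity map on $2^{\N}$ witnesses orbit equivalence of the two $E_0$-systems, and $M_{g\cdot\alpha}\simeq M_\alpha$ for all $g\in G$. Thus every $G$-orbit is contained in a single ITPFI$_2$-isomorphism class, and the $G$-orbit equivalence relation Borel reduces to isomorphism of ITPFI$_2$ factors.

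The hard part is to arrange $X_0$ so that the $G$-action is \emph{turbulent} there and yet the full isomorphism relation on $X_0$ is not everything. Turbulence requires that every $G$-orbit in $X_0$ is dense and meager while the "local orbits" are somewhere dense; the first two properties are classical for an $\ell^2$-action on a larger Polish sequence space (one typically sits $X_0$ inside a weighted $c_0$-like space where $\ell^2$ embeds densely, meagerly, continuously), and the third is inherited from the group structure provided the local translations sweep out open sets in the ambient space. Non-triviality means that isomorphism does \emph{not} collapse $X_0$ to one class; for this one feeds in invariants of $\alpha$ such as Connes' $T$-invariant, the Araki--Woods $S$-invariant, or the flow of weights, all of which are Borel in $\alpha$ and not $G$-invariant on $X_0$, so $G$-orbits are properly contained in isomorphism classes and yet isomorphism classes are themselves properly contained in $X_0$.

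The central obstacle is the simultaneous engineering of $X_0$: one needs the $\ell^2$-action to remain well-defined everywhere on $X_0$ (no escape from $(0,1)^{\N}$ nor from the type $\III_0$ stratum), the ambient Polish structure on $X_0$ to make the $\ell^2$-orbits meager and dense with the Hjorth local-orbit condition, and enough freedom in the invariants listed above to guarantee that the $G$-orbit equivalence relation is a \emph{proper} sub-relation of isomorphism on $X_0$. Once $X_0$ is constructed with these properties, Hjorth's turbulence theorem immediately yields that the $G$-orbit equivalence relation, and hence isomorphism of ITPFI$_2$ factors, is not classifiable by countable structures.
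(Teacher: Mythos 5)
Your overall architecture (parametrize ITPFI$_2$ factors by eigenvalue sequences, reparametrize logistically so that an $\ell^2$-translation action preserves the measure class by Kakutani's dichotomy, hence preserves the isomorphism class, then run a turbulence argument) is in the right spirit: the cited proof does indeed rest on Araki--Woods-style perturbation results for product states together with Baire category/turbulence, and your Kakutani step is sound (in the logistic parametrization the Hellinger distance is uniformly Lipschitz in the parameter, so $t\in\ell^2$ gives equivalent product measures). But there is a genuine gap at the final step. From ``the orbit equivalence relation of a turbulent $G$-action is contained in the isomorphism relation'' plus ``isomorphism has more than one class on $X_0$'' you conclude non-classifiability \emph{immediately}; that inference is false. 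A coarsening of a turbulent orbit equivalence relation can be smooth --- the trivial relation with one class, or with two classes, contains it and is classifiable. What the turbulence machinery actually gives is: any Baire-measurable homomorphism from the $G$-action into an $S_\infty$-orbit equivalence relation is constant on a comeager set; to contradict the existence of a \emph{reduction} of isomorphism you then need that \emph{no isomorphism class is comeager} in $X_0$ (in practice: every class is meager). Establishing this meagerness is the real content of the Araki--Woods part of the proof --- one must show that generically, nearby eigenvalue sequences yield non-isomorphic type $\III_0$ factors, which requires actual computations with invariants (asymptotic ratio set, $T$-set, flow of weights), not just the observation that such invariants exist and are Borel. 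Your proposal never supplies this, and the sentence meant to cover it is internally inconsistent: you say the invariants are ``not $G$-invariant on $X_0$,'' but since they are isomorphism invariants and you have just shown each $G$-orbit lies inside one isomorphism class, they \emph{must} be constant on $G$-orbits; what you need is that their level sets are meager in $X_0$, which is a different and harder statement.

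A secondary point of comparison: your detour through Feldman--Moore and Krieger's theorem is heavier than what the argument needs and than what the actual proof uses. For the direction you need ($G$-orbits inside isomorphism classes) it suffices that equivalent product states yield isomorphic ITPFI factors, an elementary Araki--Woods fact; the deep direction of Krieger (isomorphism of $M(E)$ and $M(F)$ implies orbit equivalence) plays no role in the turbulence argument, since the contradiction is obtained from meagerness of classes, not from an analysis of when two factors are isomorphic. The paper explicitly advertises this proof as avoiding such machinery --- ``it only relies on techniques that essentially go back to Araki and Woods, as well as straight-forward Baire category/turbulence arguments'' --- so you should replace the Krieger step by the direct product-state statement and then concentrate all effort on the genuinely missing lemma: no comeager isomorphism class in your parameter space.
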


The proof of this is probably the most elementary and direct of all non-classification results for factors. It only relies on techniques that essentially go back to Araki and Woods in \cite{arwo}, as well as straight-forward Baire category/turbulence arguments.

\medskip

We close with a brief discussion of an open problem. The only upper bound known about the classification of von Neumann algebras (and factors) is the following:

\begin{theorem}[Sasyk-T\"ornquist, \cite{sato09a}]\label{t.sato3}
The isomorphism relation in $\vN(H)$ is Borel reducible to an orbit equivalence relation induced by a continuous action of the unitary group $U(\ell^2(\N))$ on a Polish space.
\end{theorem}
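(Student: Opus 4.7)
My plan is to Borel-reduce abstract isomorphism on $\vN(H)$ to the orbit equivalence relation of the conjugation action of $U(\ell^2(\N))$ on $\vN(\ell^2(\N))$. Equipped with the Effros--Mar\'echal topology (see \cite{hawi00}), $\vN(\ell^2(\N))$ is a Polish space on which $U(\ell^2(\N))$ (in the strong operator topology) acts continuously by $U\cdot M=UMU^{\ast}$, and the orbit equivalence relation of this action is precisely spatial isomorphism. Since spatial isomorphism trivially implies abstract isomorphism, it suffices to construct a Borel map $\Phi:\vN(H)\to\vN(\ell^2(\N))$ such that $M\cong N$ iff $\Phi(M)$ and $\Phi(N)$ lie in a common $U(\ell^2(\N))$-orbit.

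The map $\Phi$ sends $M$ to a standard-form representation. Since $H$ is separable, every $M\in\vN(H)$ is $\sigma$-finite, so $M$ admits a separating unit vector (equivalently, $M'$ admits a cyclic vector). The set $\{(M,\xi)\in\vN(H)\times H:\|\xi\|=1,\ \xi\text{ separating for }M\}$ is Borel and projects onto $\vN(H)$, so by a standard Borel uniformization there is a Borel selector $M\mapsto\xi_M$. On the closed subspace $K_M=\overline{M\xi_M}$ the algebra $M$ acts faithfully, with $\xi_M$ as a cyclic and separating vector. A Borel-uniform Gram--Schmidt procedure, applied to a Borel-selected countable dense subset of $K_M$, yields a Borel family of isometries $V_M:K_M\to\ell^2(\N)$; the Borel set on which $\dim K_M<\aleph_0$ is handled separately, since isomorphism restricted to it is a trivially Borel equivalence relation with countably many classes. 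Set $\Phi(M)=V_M(M\upharpoonright K_M)V_M^{\ast}\in\vN(\ell^2(\N))$.

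That spatial isomorphism of $\Phi(M)$ and $\Phi(N)$ yields $M\cong N$ is trivial. For the reverse direction, observe that $\Phi(M)$ is a faithful normal representation of $M$ admitting the cyclic and separating vector $V_M\xi_M$, and similarly for $\Phi(N)$. By Tomita--Takesaki theory, any faithful normal representation that admits a cyclic and separating vector is in standard form, with modular conjugation and natural positive cone determined by the vector. By the Connes--Haagerup uniqueness of the standard form, any two standard forms of a given von Neumann algebra on separable Hilbert spaces are spatially isomorphic. Hence if $M\cong N$ abstractly, then $\Phi(M)$ and $\Phi(N)$ are conjugate by some $U\in U(\ell^2(\N))$.

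The principal obstacle is the verification of Borel measurability throughout: the selector $M\mapsto\xi_M$, the Borel identification of $K_M$ with $\ell^2(\N)$, and the patching in of the finite-dimensional exceptional set all require care with the Effros--Mar\'echal Borel structure and the appropriate Borel uniformization theorems. The essential functional-analytic input is the Connes--Haagerup uniqueness of the standard form, which rests on Tomita--Takesaki modular theory and is the deepest ingredient in the argument. Once these are in place, the map $\Phi$ is the desired Borel reduction from abstract isomorphism on $\vN(H)$ to the orbit equivalence relation of the continuous $U(\ell^2(\N))$-action on $\vN(\ell^2(\N))$.
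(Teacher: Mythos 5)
The paper itself only cites \cite{sato09a} for this theorem, and the proof there shares your overall architecture---replace each $M\in\vN(H)$ by a canonical spatial copy, then use the Haagerup--Winsl\o w results \cite{hawi98,hawi00} that $\vN(\ell^2(\N))$ is Polish in the Effros--Mar\'echal topology and that the conjugation action of $U(\ell^2(\N))$ is continuous. But your first step contains a genuine gap: $\sigma$-finiteness of $M$ gives a faithful \emph{normal state}, not a separating \emph{vector} in the given representation. A separating vector for $M$ is the same thing as a cyclic vector for $M'$, and this can simply fail to exist: take $M=B(H)\in\vN(H)$, whose commutant $\C I$ has no cyclic vector (for $\dim H\geq 2$), so $B(H)$ has no separating vector---indeed any rank-one projection kills any prescribed unit vector. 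So the set on which your $\Phi$ is undefined is not the innocuous finite-dimensional exceptional set you patch over; it is a substantial Borel chunk of $\vN(H)$ on which isomorphism is certainly not ``trivially Borel with countably many classes.'' A secondary soft spot: a Borel set with nonempty sections need not admit a Borel uniformization, so ``standard Borel uniformization'' is too glib; one needs a large-section selection theorem, which does apply here because the cyclic vectors for $M'$, \emph{when they exist}, form a dense $G_\delta$ of the unit sphere---but existence is precisely what fails.

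The repair is to amplify first: send $M$ to $M\otimes\C I$ acting on $H\otimes\ell^2(\N)\simeq\ell^2(\N)$. This map is Borel and isomorphism-preserving, and the amplification always has a separating vector (take $\sum_n 2^{-n/2}\,\xi_n\otimes e_n$ with $(\xi_n)$ dense in the unit sphere of $H$), since its commutant is $M'\,\bar{\otimes}\,B(\ell^2(\N))$. At that point, however, your Tomita--Takesaki/standard-form machinery becomes unnecessary: since the commutants of the amplifications are properly infinite, the classical theorem that a $*$-isomorphism between von Neumann algebras on separable Hilbert spaces with properly infinite commutants is unitarily implemented already shows that $M\cong N$ iff $M\otimes\C I$ and $N\otimes\C I$ are unitarily conjugate, and this amplification argument is exactly the ``not hard'' proof of \cite{sato09a}. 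Your standard-form route (amplify, Borel-select a cyclic-and-separating vector via the comeager-section uniformization, invoke Haagerup's uniqueness of the standard form) can be made to work and is a legitimate alternative, but it invokes the deepest tool in the area to do a job that elementary multiplicity theory does for free; as written, without the amplification, the reduction is simply undefined on all algebras lacking a separating vector.
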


The proof is not hard, and it is natural to ask if this upper bound on the complexity of the isomorphism relation for factors is in fact optimal. Given that there currently is no evidence to the contrary, the following conjecture has been made (and stated publicly in many talks):

\begin{conjecture}[T\"ornquist]
The isomorphism relation for separably acting factors is universal (from the point of view of Borel reducibility) for orbit equivalence relations induced by a continuous action of the unitary group on a Polish space. In fact, this is already true for isomorphism of $\II_1$ factors.
\end{conjecture}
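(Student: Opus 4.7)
The plan is to establish the missing lower bound matching Theorem \ref{t.sato3}. Fix a continuous action $\alpha:U(\ell^{2}(\N))\actson X$ on a Polish space $X$, with orbit equivalence relation $E_\alpha$; what is required is a Borel map $\Phi:X\to\mathcal F(\ell^{2}(\N))$ such that $x\, E_\alpha\, y$ if and only if $\Phi(x)\simeq\Phi(y)$. Without loss of generality I would first pass to a concrete ``universal'' continuous $U(H)$-action (obtained, e.g., from a universal Polish $U(H)$-space), and then try to construct $\Phi$ that is $U(H)$-equivariant with respect to the conjugation action of $U(H)$ on $\mathcal F(H)$. Equivariance automatically delivers the easy direction $x\, E_\alpha\, y\Rightarrow\Phi(x)\simeq\Phi(y)$.

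To build $\Phi$, the natural strategy is to fix a ``template'' $\II_1$ factor $N_0\subseteq B(H)$ with strong rigidity properties, and associate to each $x\in X$ a factor $\Phi(x)\subseteq B(H)$ obtained by a twist or deformation of $N_0$ parametrized equivariantly by $x$. Concretely, one might take $N_0$ to be a group-measure space factor $L^\infty(Y,\mu)\rtimes_\sigma \Gamma$ for a group $\Gamma$ enjoying W*-superrigidity (a suitable wreath product or higher-rank lattice, as in the work of Popa, Ioana, Vaes, and their collaborators), and modify its representation by a cocycle or unitary deformation extracted from $x$. The construction should be designed to be functorial enough that $u\cdot x=y$ yields $\Phi(y)=u\Phi(x)u^{*}$.

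The hard direction, and the heart of the matter, is the converse $\Phi(x)\simeq\Phi(y)\Rightarrow x\, E_\alpha\, y$. My plan here would be to arrange that the Cartan subalgebra $L^\infty(Y,\mu)\subseteq\Phi(x)$ is the \emph{unique} Cartan subalgebra of $\Phi(x)$ up to unitary conjugacy (invoking the Popa--Vaes Cartan uniqueness theorems), so that any abstract isomorphism $\Phi(x)\simeq\Phi(y)$ must be spatially implemented by a unitary $u\in U(H)$; the functoriality built into the construction of $\Phi$ should then force $u\cdot x=y$, completing the reduction.

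The principal obstacle, and the reason the conjecture remains open, is that no presently known construction is simultaneously flexible enough to encode a generic continuous $U(H)$-parameter and rigid enough to classify the resulting factors up to $U(H)$-conjugation. Existing W*-superrigidity theorems classify free ergodic actions of \emph{countable} groups up to orbit or conjugacy equivalence, and therefore yield reductions only from orbit equivalence relations of countable groups; here one must faithfully encode a continuous unitary action, whose orbit equivalence relation lies strictly above anything reachable by classification by countable structures (as already witnessed by Theorems \ref{t.sato1} and \ref{t.sato2}). Bridging this gap would seem to require a genuinely ``continuous'' rigidity theorem — perhaps a cocycle superrigidity statement for cocycles valued in $U(H)$ itself, or a Galois-type correspondence between deformations of $N_0$ inside $B(H)$ and orbits of $\alpha$ — and this appears to be beyond the present reach of the deformation/rigidity machinery.
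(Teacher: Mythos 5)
You are attempting to prove a statement that the paper records as an open \emph{conjecture}: the paper establishes only the upper bound (Theorem \ref{t.sato3}) and explicitly states that the matching lower bound is not known, so there is no proof in the paper to compare against, and the question is only whether your argument closes the problem. It does not, and you concede as much in your final paragraph: the entire hard direction $\Phi(x)\simeq\Phi(y)\Rightarrow x\,E_\alpha\,y$ is delegated to a ``genuinely continuous rigidity theorem'' that you neither formulate precisely nor prove. A proposal whose crucial step is an unstated hypothetical theorem is a research program, not a proof; the equivariance argument for the easy direction is fine but was never the issue.

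Beyond the admitted gap, two concrete points where the outlined strategy breaks. First, the rigidity machinery you invoke operates at the wrong level of complexity: a Popa--Vaes uniqueness-of-Cartan theorem converts an isomorphism $\Phi(x)\simeq\Phi(y)$ into an isomorphism of the Cartan inclusions, hence --- by the Feldman--Moore theorem quoted in Lecture 3 --- into an \emph{orbit equivalence} of the underlying countable equivalence relations, and W*-superrigidity upgrades this at best to conjugacy of actions of a fixed countable group. The data recovered is therefore countable-group-action data, which is exactly the kind of encoding used to prove Theorems \ref{t.sato1} and \ref{t.sato2}; since the universal orbit equivalence relation of $U(\ell^2(\N))$ lies strictly above classification by countable structures (and above essentially countable relations), no twist of a fixed W*-superrigid template by data extracted from $x$ can, via these theorems alone, remember the full $U(H)$-orbit of a generic $x$. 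Second, a technical slip in the converse step: uniqueness of the Cartan subalgebra is up to conjugacy by a unitary \emph{in the factor} (or an inner perturbation of the isomorphism), and an abstract isomorphism of $\II_1$ factors is spatially implemented only when both are arranged in standard form; even granting both, the implementing unitary is produced by the isomorphism and has no a priori relation to the given action $\alpha$ on $X$, so the assertion that ``functoriality forces $u\cdot x=y$'' is not a consequence of the construction but a restatement of the conjecture itself. Your last paragraph is a sound diagnosis of why the problem remains open; the preceding ones should not be mistaken for progress toward a proof.
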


In other words, the isomorphism relation for $\II_1$ factors attains the upper bound given in Theorem \ref{t.sato3}. A positive solution to this would, in addition to being the ultimate non-classification theorem in the area, give a first example of a ``naturally occurring'' isomorphism relation which realizes the maximal possible complexity that an orbit equivalence relation induced by the unitary group can have.

\section*{Sources}

A great source for learning about von Neumann algebras are the lecture notes of Vaughan Jones \cite{vj}, freely available online at:
\begin{center}
{\tt
http://math.berkeley.edu/~vfr/MATH20909/VonNeumann2009.pdf}
\end{center}

Lecture 1 draws its material from \cite{gkp}, \cite{arv76} and \cite{sakai71}. Lecture 2 is based on \cite{blackadar} and \cite{oanielsen}, and to a lesser extend on \cite{dixmier} and \cite{vj}. In Lecture 3, the discussion of the group measure space von Neumann algebras and crossed products is based on \cite{blackadar} and \cite{vj}, and the discussion of the von Neumann algebra of a non-singular equivalence relation is based on \cite{fm2}. The discussion of hyperfinite von Neumann algebras in Lecture 4 draws on \cite{blackadar} and \cite{connes94}.

\bibliographystyle{alpha}
\bibliography{ASTnotes}

\end{document}